\numberwithin{equation}{section}
\newtheorem{thm}{Theorem}[section]
\newtheorem{lemma}[thm]{Lemma}
\newtheorem{prop}[thm]{Proposition}
\newtheorem{cor}[thm]{Corollary}
\newtheorem{rmk}[thm]{Remark}
\newtheorem*{rmk*}{Remark}
\newtheorem*{defn*}{Definition}
\newcommand{\bR}{\mathbf{R}}
\newcommand{\bG}{\mathbf{G}}
\newcommand{\R}{\mathbb{R}}
\newcommand{\Q}{\mathbb{Q}}
\newcommand{\C}{\mathbb{C}}
\newcommand{\Z}{\mathbb{Z}}
\newcommand{\N}{\mathbb{N}}
\newcommand{\F}{\mathbb{F}}
\newcommand{\T}{\mathbb{T}}
\newcommand{\Ch}[1]{\mathrm{Ch}\left(#1\right)}
\newcommand{\Tr}[1]{\mathrm{Tr}\left(#1\right)}
\newcommand{\relCh}[2]{\mathrm{Ch}_{#1}\left(#2\right)}
\newcommand{\relTr}[2]{\mathrm{Tr}_{#1}\left(#2\right)}
\newcommand{\tr}{\mathrm{tr}}
\newcommand{\Aut}[1]{\mathrm{Aut}\left(#1\right)}
\newcommand{\ChFam}[2]{\mathrm{Ch}\left(#1;#2\right)}
\newcommand{\conv}[1]{\mathrm{conv}\left(#1\right)}
\newcommand{\Rad}[1]{\mathrm{Rad}\left(#1\right)}
\title[Spectral gap and character limits]{Spectral gap and character limits in \\arithmetic groups}
\author{Arie Levit, Raz Slutsky and Itamar Vigdorovich}
\begin{document}


\maketitle

\begin{abstract}
We establish vanishing results for limits of characters in various discrete groups, most notably irreducible lattices in higher rank semisimple Lie groups. 
As an application, we show that any sequence of finite-dimensional representations converges to the regular representation in the Fell topology. 
We achieve this  by studying the geometry of the simplex of traces of discrete groups having Kazhdan's property (T) or its relative generalizations.


\end{abstract}

\section{Introduction}

Let $\Gamma$ be a countable group. A \emph{trace} on $\Gamma$ is a positive definite, conjugation-invariant function $\varphi : \Gamma \to \mathbb{C}$ normalized so that $\varphi(e_\Gamma) = 1$.
The space $\Tr{\Gamma}$ of all traces on the group $\Gamma$ is convex and compact   in the topology of pointwise convergence. 
The extreme points of $\Tr{\Gamma}$ are called \emph{characters}. The space of all characters is denoted by $\Ch{\Gamma}$. In general $\Ch{\Gamma}$  need not be compact.

Characters play an important role in harmonic analysis.
For example, the space of characters of the group $\Gamma$ coincides with its Pontryagin dual  when $\Gamma$ is abelian and with the set of equivalence classes of its irreducible representations when $\Gamma$ is finite.
Recently, the study of traces on discrete groups has led to remarkable applications towards understanding the group structure, its dynamics, stability, rigidity and more, see e.g. \cite{bekka2007operator, boutonnet2021stationary, bbhp2022charmenability,hadwin2018stability, levit2022characters,lavi2023characters}. 

We study the character theory of higher rank lattices in semisimple groups. Our main result concerns the asymptotic behaviour of sequences of such characters.

\begin{thm}\label{thm intro:higher rank lattice vanishing of characters}
Let $G$ be a semisimple Lie group with finite center. Assume that $\mathrm{rank}_\mathbb{R}(G) \ge 2$ and that some non-compact simple factor of $G$ has property (T). Let $\Gamma$ be an irreducible lattice in the Lie group $G$. If $\varphi_n \in \Ch{\Gamma}$ is any sequence of pairwise distinct characters then $\varphi_n(\gamma)\to 0$ for all elements $\gamma \in \Gamma \setminus \mathrm{Z}(\Gamma)$.
\end{thm}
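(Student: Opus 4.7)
My approach is in two stages: first, classify the characters of $\Gamma$; second, analyze accumulation points of characters by establishing a spectral gap on the simplex $\Tr{\Gamma}$.

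For the classification, I would invoke higher rank character rigidity, as developed by Peterson, Bekka, and Bader--Boutonnet--Houdayer--Peterson: under the hypotheses of the theorem, every character of $\Gamma$ is either supported on the centre $Z(\Gamma)$, or is the normalized trace $\frac{1}{d}\tr\circ\rho$ of a finite-dimensional irreducible unitary representation $\rho\colon\Gamma\to U(d)$. Borel density forces $Z(\Gamma)\subseteq Z(G)$ to be finite, so only finitely many characters are of the first type. After discarding them and passing to a subsequence, I may assume every $\varphi_n$ comes from an irreducible $\rho_n\colon\Gamma\to U(d_n)$, and a polynomial representation growth argument (using the $(T)$-factor of $G$) then forces $d_n\to\infty$.

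The main step is to show that every pointwise accumulation point $\varphi_\infty\in\Tr{\Gamma}$ of $\{\varphi_n\}$ lies in the face $F=\{\varphi\in\Tr{\Gamma}:\varphi|_{\Gamma\setminus Z(\Gamma)}\equiv 0\}$, which is affinely isomorphic to the probability measures on the Pontryagin dual of $Z(\Gamma)$. For this I would aim to establish a uniform spectral gap: a finite set $Q\subset\Gamma$ and $\varepsilon>0$ such that every character outside $F$ has pointwise distance at least $\varepsilon$ from $F$ on $Q$. Such a gap immediately rules out non-central accumulation of pairwise distinct characters, yielding $\varphi_\infty\in F$ and hence $\varphi_n(\gamma)\to 0$ for every $\gamma\in\Gamma\setminus Z(\Gamma)$.

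The principal obstacle is establishing this spectral gap. Since some simple factor of $G$ may lack property (T), the lattice $\Gamma$ itself need not have property (T), so one cannot simply invoke property (T) for $\Gamma$. Instead I would use the ``relative generalizations'' of property (T) announced in the abstract: identify a subgroup $\Lambda\leq\Gamma$ sitting over the $(T)$-factor of $G$, so that the pair $(\Gamma,\Lambda)$ has relative property (T), and then propagate the resulting spectral gap from $\Lambda$ to all of $\Gamma$ using the ICC property of $\Gamma/Z(\Gamma)$ (via Margulis' normal subgroup theorem) together with the higher-rank ambient structure. Making this propagation quantitative and uniform on $\Tr{\Gamma}$ is the delicate technical point.
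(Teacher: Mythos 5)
Your two-stage plan --- classify characters by higher-rank rigidity, then control accumulation points by a spectral-gap argument --- matches the paper's architecture at a coarse level, but the spectral-gap statement you aim for is false and would in fact prove the \emph{opposite} of the theorem. You ask for a finite $Q\subset\Gamma$ and $\varepsilon>0$ such that every character outside the central face $F$ is at distance $\geq\varepsilon$ from $F$ on $Q$. Extending $\varphi_n|_{Z(\Gamma)}$ by zero to all of $\Gamma$ produces an element of $F$ agreeing with $\varphi_n$ on $Z(\Gamma)$, so the distance from $\varphi_n$ to $F$ on $Q$ is at most $\max_{q\in Q\setminus Z(\Gamma)}|\varphi_n(q)|$; your gap would therefore force $|\varphi_n(q)|\geq\varepsilon$ for some non-central $q\in Q$ and every $n$, contradicting the conclusion $\varphi_n(q)\to 0$. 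Put differently, $\varepsilon$-separation from $F$ is a closed condition, so it would pass to the limit and give $\varphi_\infty\notin F$ --- the reverse of what you want. The theorem asserts precisely that non-central characters \emph{do} crowd into $F$; there is no uniform gap away from $F$.

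The mechanism that does work is qualitatively different. One shows: (i) the closure of the set of finite-dimensional characters still consists of characters; (ii) finite-dimensional characters are isolated in that closed set, so there are only finitely many in each dimension and the dimensions of any infinite sequence of distinct ones tend to infinity; (iii) the accumulation point is then a \emph{non-amenable} character; (iv) by the dichotomy, a non-amenable character vanishes off $Z(\Gamma)$. The spectral-gap input for (i)--(iii) is not relative property (T) propagated through ICC/normal-subgroup structure (you leave this propagation unjustified, and it is not clear it can be made to work), but Lubotzky--Zimmer's property $\mathrm{T}(\mathrm{FD})$ for $\Gamma$: the trivial representation is isolated within the family of finite-dimensional representations. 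This weaker property is what is actually available when only one simple factor of $G$ has (T), and it is exactly enough because, by the character rigidity you already invoked, all non-central characters are finite-dimensional. Technically, the norm bounds for the conjugation representation $c_\varphi$ and for $\pi_\varphi\otimes\pi_\varphi^*$ are encoded as closed inequalities in the trace $\varphi$, so they pass to pointwise limits of traces; your proposed ``polynomial representation growth'' step is replaced by the cheaper isolation statement in (ii).
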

 In particular, if the lattice $\Gamma$   is center-free then any such sequence of characters $\varphi_n \in \Ch{\Gamma}$  converges pointwise to the Dirac character $\delta_e \in \Ch{\Gamma}$. 

Our proof of Theorem \ref{thm intro:higher rank lattice vanishing of characters} depends on a more general principle (Theorem \ref{thm:general-statement-for-vanishing-of-characters}). Here are some additional situations where this principle applies, and we  establish asymptotic vanishing results for characters. The full details are provided in \S\ref{sec:character-limits}.
 \begin{enumerate}\label{more_general_examples}
     \item 
     \label{example:global fields}
     Higher rank $S$-arithmetic groups over global fields, provided that the enveloping semisimple group admits a non-compact simple factor with property (T);  see Theorem \ref{thm:generalization to local fields}.
     \item 
     \label{example:SL2}
     The linear groups $\mathrm{SL}_2(S^{-1} \mathcal{O}_k )$ where $k$ is an algebraic number field with ring of integers $\mathcal{O}_k$ and $S \subset \mathcal{O}_k$ is a finite (possibly empty) multiplicative subset such that the localization $S^{-1}\mathcal{O}_k$ has infinitely many units; see Theorem \ref{thm:example 2}.
      \item
      \label{example:congruence}
      In the absence of Kazhdan's property (T) we may rely on Selberg's property instead. We establish convergence of congruence characters, i.e. characters factoring through a congruence quotient;  see Theorem \ref{thm:higher-rank-congruence-limit}.
     \item 
     \label{example:non semisimple}
     Lattices in certain non-semisimple Lie groups, e.g. the group  $\mathrm{GL}_n(\Z)\ltimes \Z^n$ with $n \ge 3$; see Theorem \ref{thm:general arithmetic}. 
 \end{enumerate}

\subsection*{Relation to invariant random subgroups}
An important connection between characters and dynamics is provided by the following notion coined in \cite{abert2014kesten}. An \emph{invariant random subgroup}  of the group $\Gamma$ is a $\Gamma$-invariant Borel probability measure on the Chabauty space of its subgroups.
The key observation is that given an invariant random subgroup $\mu$ of the group $\Gamma$ one can construct a trace $\varphi_\mu \in \Tr{\Gamma}$ by setting $\varphi_\mu(\gamma) = \mu(\{ H \leq \Gamma \; | \;\gamma \in H \})$ for all elements $\gamma \in \Gamma$. 

A major work dealing with invariant random subgroups of lattices in higher rank semisimple Lie groups is \cite{abert2017growth}.  Many of the striking applications of that work are obtained by showing that every sequence of pairwise distinct irreducible invariant random subgroups of such a lattice converges to a measure supported on subgroups of the center. Theorem \ref{thm intro:higher rank lattice vanishing of characters} is an analogue of this phenomenon for characters.

We mention in passing that the special case of the convergence result from \cite{abert2017growth} for subgroups of a given lattice can be inferred from our Theorem \ref{thm intro:higher rank lattice vanishing of characters} via a careful analysis of traces associated to finite-index subgroups.


\subsection*{Character bounds of finite groups}
Note that characters of quotients lift to characters of the group itself. 
Here is an application. Consider any sequence of non-trivial characters $\varphi_i \in \mathrm{SL}_n(\mathbb{F}_{p_i})$ where $p_i \in \mathbb{N}$ are distinct primes and $n \ge 3$. A consequence of Theorem \ref{thm intro:higher rank lattice vanishing of characters} is that  $\varphi_i([\gamma]_{p_i}) \to 0$ for any fixed non-central element $\gamma \in \mathrm{SL}_n(\mathbb{Z})$. Here $[\gamma]_{p_i}$ denotes the entry-wise reduction of the matrix $\gamma$ modulo the prime $p_i$. 
More generally, given a matrix $\gamma$ defined over a ring $R$ and an ideal $I \lhd R$, we let $[\gamma]_I$ denote the matrix obtained by reducing modulo $I$ entry-wise.  We deduce the following statement on characters of   simple (as well as non-simple) finite groups of Lie type; see p. \pageref{sec:finite groups} for the exact definitions.


\begin{cor}
\label{cor intro: character limits}
Let $G$ be a Chevalley group of rank at least two\footnote{The Chevalley group $G$ can be one of the classical Chevalley groups $A_n, B_n, C_n$ or $D_n$ for some $n \ge 2$ or one of the exceptional Chevalley groups $E_6, E_7, E_8, F_4$ or $G_2$.}. Let $K$ be a global field and $\mathcal{O}_K$ its ring of integers. Then for any  non-central element $\gamma\in G(\mathcal{O}_K)$ and for any $\varepsilon>0$  the   bound
\begin{equation}
    |\varphi([\gamma]_I)|<\varepsilon
\end{equation}
holds for all but finitely many ideals $I$ of $\mathcal{O}_K$ and for any faithful\footnote{A character $\varphi$ is faithful if $\varphi(g) \neq 1$ for every non-trivial element $g$. If the ideal $I$ is maximal then the group $G(\mathcal{O}_K/I)$ is simple so that  any non-trivial character is faithful.} character $\varphi$ of the finite group $G(\mathcal{O}_K/I)$. 
\end{cor}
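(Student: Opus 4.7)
The plan is to argue by contradiction and pull each finite character back to a character of the arithmetic group $\Gamma := G(\mathcal{O}_K)$, then invoke the higher-rank vanishing principle. Suppose the conclusion fails for some fixed non-central $\gamma$ and some $\varepsilon > 0$: then there exist infinitely many pairs $(I_n, \varphi_n)$, with $\varphi_n$ a faithful character of the finite group $G(\mathcal{O}_K/I_n)$, such that $|\varphi_n([\gamma]_{I_n})| \ge \varepsilon$. Since every finite group carries only finitely many characters, I may pass to a subsequence on which the ideals $I_n$ are pairwise distinct. Denoting by $\pi_n : \Gamma \twoheadrightarrow G(\mathcal{O}_K/I_n)$ entry-wise reduction mod $I_n$, I set $\tilde\varphi_n := \varphi_n \circ \pi_n$. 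Each $\tilde\varphi_n$ is a trace on $\Gamma$, and extremality is preserved under pullback along a surjection (equivalently, the factor property of the associated GNS representation is preserved), so $\tilde\varphi_n \in \Ch{\Gamma}$ and $\tilde\varphi_n(\gamma) = \varphi_n([\gamma]_{I_n})$.

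Next I would verify that the $\tilde\varphi_n$ are pairwise distinct in $\Ch{\Gamma}$, which is where the faithfulness hypothesis is essential. Since $\varphi_n$ is faithful, the level set $\{g \in \Gamma : \tilde\varphi_n(g) = 1\}$ coincides with the principal congruence subgroup $\Gamma(I_n) := \ker \pi_n$. Evaluating on elements of the form $u_\alpha(r)$ coming from root subgroups of the Chevalley group shows that $\Gamma(I_n)$ determines the ideal $I_n$ (for instance, for type $A_{n-1}$, $u_{ij}(r) \in \Gamma(I)$ if and only if $r \in I$). Consequently distinct ideals $I_n$ yield distinct characters $\tilde\varphi_n$.

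Finally, I apply the $S$-arithmetic version of Theorem \ref{thm intro:higher rank lattice vanishing of characters} recorded as Theorem \ref{thm:generalization to local fields}: the Chevalley assumption with rank at least two guarantees that $\Gamma = G(\mathcal{O}_K)$ embeds as an irreducible lattice of higher rank in the ambient semisimple group over the appropriate places, and that every non-compact simple factor is split of higher rank and hence has Kazhdan's property (T). Since $Z(\Gamma) = Z(G) \cap \Gamma$ for a Chevalley group of rank at least two, the hypothesis that $\gamma$ is non-central in $G(\mathcal{O}_K)$ gives $\gamma \notin Z(\Gamma)$. The theorem then forces $\tilde\varphi_n(\gamma) \to 0$, contradicting $|\tilde\varphi_n(\gamma)| \ge \varepsilon$. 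The main obstacle is the distinctness step, which is exactly why faithfulness must be assumed: without it, characters pulled back from different ideals could coincide after factoring through a common smaller quotient, and the asymptotic vanishing theorem would not apply to the sequence.
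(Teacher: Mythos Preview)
Your argument is correct and follows the same overall route as the paper: pull back the finite characters along the reduction maps $\pi_I : G(\mathcal{O}_K) \to G(\mathcal{O}_K/I)$ to obtain characters of the arithmetic group $\Gamma = G(\mathcal{O}_K)$, then invoke Theorem~\ref{thm:generalization to local fields}. The paper phrases this directly rather than by contradiction --- it simply enumerates all characters of $\Gamma$ that factor through some congruence quotient and applies the vanishing theorem --- and it spends its effort instead on the algebraic setup (why $G(\mathcal{O}_K)$ is a higher-rank $S$-arithmetic group with a non-compact Kazhdan factor, separately in the number-field and function-field cases, and why the reduction maps are surjective via the elementary-subgroup description). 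Your write-up is actually more explicit than the paper's about the one point the paper leaves implicit: the faithfulness hypothesis is precisely what forces the pulled-back characters $\tilde\varphi_n$ coming from distinct ideals to be pairwise distinct, since it pins down $\ker \pi_{I_n}$ as the level set $\{\tilde\varphi_n = 1\}$, and root-subgroup elements then recover $I_n$.
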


Quantitative bounds on characters of finite groups are called character bounds. These  have been extensively studied, see e.g. \cite{gluck1993character,gluck1997characters,larsen2011waring,bezrukavnikov2018character}. Exponential character bounds have found extraordinary applications, most notably in the proof of the Ore conjecture \cite{liebeck2010ore}.
While our statement is not quantitative,   it does provide a relatively simple framework for understanding why such  bounds can be expected. Moreover, character bounds   have been mostly considered for finite \emph{simple} groups. To the best of our knowledge,
Corollary \ref{cor intro: character limits} is novel for certain non-simple groups, such as   $\mathrm{SL}_d(\Z/p^i\Z)$ with  $p$ prime. See also Theorem \ref{thm:semidirect product} for character bounds of    $\mathrm{SL}_d(\F_{p_i})\ltimes \F_{p_i}^d$ provided $d \ge 3$ and with $p_i$ prime.

\subsection*{Convergence of representations}

Convergence of  traces implies convergence of the corresponding representations with respect to the Fell topology; see  Lemma \ref{lem:taking character to its GNS is Fell continuous}. Therefore Theorem \ref{thm intro:higher rank lattice vanishing of characters} on character limits implies  convergence of representations to the regular representation.

\begin{thm}\label{thm intro: convergence of reps}
    Let $\Gamma$ be an irreducible lattice as in Theorem \ref{thm intro:higher rank lattice vanishing of characters} and  with trivial center. Then  
    any sequence of pairwise non quasi-equivalent\footnote{Two finite-dimensional unitary representations $\pi_1$ and $\pi_2$ are called \emph{quasi-equivalent} if they have the same set of irreducible subrepresentations.} finite-dimensional unitary representations of the group $\Gamma$ converges to the regular representation $\lambda_\Gamma$ in the Fell topology. 
\end{thm}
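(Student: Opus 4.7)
The plan is to extract from each $\pi_n$ a single irreducible subrepresentation whose normalized character converges pointwise to $\delta_e$, and then invoke the Fell continuity of the GNS construction (Lemma~\ref{lem:taking character to its GNS is Fell continuous}) together with standard properties of quasi-equivalence and weak containment.

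Write $S_n \subset \Ch{\Gamma}$ for the finite set of normalized characters of the irreducible subrepresentations of $\pi_n$. The assumption that the $\pi_n$ are pairwise non quasi-equivalent is exactly that the sets $S_n$ are pairwise distinct. Applying Theorem~\ref{thm intro:higher rank lattice vanishing of characters} in the center-free case, the set $\mathcal{X}_{\gamma,\varepsilon}:=\{\chi\in\Ch{\Gamma}:|\chi(\gamma)|\ge\varepsilon\}$ is finite for every $\gamma\in\Gamma\setminus\{e\}$ and $\varepsilon>0$. Together with the observation that only finitely many indices $n$ can satisfy $S_n\subseteq\mathcal{B}$ for any fixed finite $\mathcal{B}\subset\Ch{\Gamma}$ (since distinct subsets of a finite set are finite in number), a routine diagonal argument over the enumeration of $\Gamma\setminus\{e\}$ produces a selection $\chi_n\in S_n$ for each $n$ with $\chi_n\to\delta_e$ pointwise on $\Gamma$.

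Let $\sigma_n$ be an irreducible subrepresentation of $\pi_n$ with normalized character $\chi_n$. By Lemma~\ref{lem:taking character to its GNS is Fell continuous}, the GNS representations $\pi_{\chi_n}$ converge in the Fell topology to $\pi_{\delta_e}=\lambda_\Gamma$. Since $\pi_{\chi_n}$ is quasi-equivalent to $\sigma_n$, we obtain $\sigma_n\to\lambda_\Gamma$ in Fell. Finally, because $\sigma_n$ is a subrepresentation of $\pi_n$, any diagonal matrix coefficient at a unit vector in $\sigma_n\subset\pi_n$ agrees with the corresponding coefficient of $\sigma_n$; hence any approximation of matrix coefficients of $\lambda_\Gamma$ by matrix coefficients of $\sigma_n$ yields the same approximation by matrix coefficients of $\pi_n$, giving $\pi_n\to\lambda_\Gamma$ in Fell.

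The substantive input here is Theorem~\ref{thm intro:higher rank lattice vanishing of characters}. The main subtlety is the selection step: one cannot simply use the full normalized character $\tr(\pi_n)/\dim\pi_n$ of $\pi_n$, since an irreducible component appearing with large multiplicity across the sequence could prevent the combined character from converging to $\delta_e$. The diagonal argument picks for each $n$ an irreducible summand of $\pi_n$ whose character avoids an appropriate finite list of \emph{bad} characters, which is possible precisely because the $S_n$ are pairwise distinct finite subsets of $\Ch{\Gamma}$.
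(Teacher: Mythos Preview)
Your argument is correct and follows essentially the same strategy as the paper: extract an irreducible subrepresentation from each $\pi_n$, apply Theorem~\ref{thm intro:higher rank lattice vanishing of characters} (in the center-free case) to get the associated normalized characters to converge to $\delta_e$, and then invoke Lemma~\ref{lem:taking character to its GNS is Fell continuous} together with $\sigma_n \le \pi_n$ to conclude $\pi_n \to \lambda_\Gamma$. The only difference is in the selection step: the paper passes to a subsequence along which the chosen irreducibles $\sigma_n$ are pairwise non-equivalent (so that the characters are pairwise distinct and Theorem~\ref{thm: character rigid Kazhdan group} applies directly), whereas your diagonal argument, using the finiteness of each $\mathcal{X}_{\gamma,\varepsilon}$, produces a selection $\chi_n \in S_n$ for every $n$ at once and thereby avoids the implicit subsequence-of-subsequence maneuver.
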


Theorem \ref{thm intro: convergence of reps} applies in additional and more  general situations, such as the ones mentioned following   Theorem \ref{thm intro:higher rank lattice vanishing of characters} on page \pageref{more_general_examples}.


Recall that the Fell topology is not Hausdorff. In particular, limits are not unique. Relying on \cite{bader2021charmenability} it can be shown that \emph{any} limit of a sequence of pairwise disjoint finite-dimensional unitary representations of an irreducible lattice $\Gamma$ as in Theorem \ref{thm intro: convergence of reps} weakly contains the regular representation $\lambda_\Gamma$ (see Proposition \ref{prop:limits of finite dim}). We do not know whether all such limits must, in fact, be weakly equivalent to the regular representation $\lambda_\Gamma$.

\begin{cor}
\label{cor:for descending stuff}   
    Let $\Gamma$ be an irreducible lattice as in Theorem \ref{thm intro:higher rank lattice vanishing of characters} and with trivial center. Let $\Gamma_n \le \Gamma$ be a strictly descending chain of finite-index subgroups. Then the sequence of quasi-regular representations $L^2(\Gamma/\Gamma_n)$ converges to the regular representation $\lambda_\Gamma$ in the Fell topology.
\end{cor}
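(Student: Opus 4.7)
The plan is to show that the normalized trace $\varphi_n$ of the quasi-regular representation $\pi_n := L^2(\Gamma/\Gamma_n)$ converges pointwise to the Dirac character $\delta_e$ on $\Gamma$; the corollary will then follow from Lemma \ref{lem:taking character to its GNS is Fell continuous}, since $\lambda_\Gamma$ is the GNS representation of $\delta_e$. First I would write
\begin{equation*}
\varphi_n(\gamma) = \frac{|\{x \in \Gamma/\Gamma_n : \gamma x = x\}|}{[\Gamma:\Gamma_n]}
\end{equation*}
and decompose $\pi_n \cong \bigoplus_j V_j^{\oplus m_j^{(n)}}$ into finite-dimensional irreducibles, with multiplicities $m_j^{(n)} = \dim V_j^{\Gamma_n} \le \dim V_j$ by Frobenius reciprocity. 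This yields a convex combination $\varphi_n = \sum_j c_j^{(n)} \psi_j$ with $\psi_j = \chi_{V_j}/\dim V_j \in \Ch{\Gamma}$, $\sum_j c_j^{(n)} = 1$, and the uniform bound
\begin{equation*}
c_j^{(n)} = \frac{m_j^{(n)}\dim V_j}{[\Gamma:\Gamma_n]} \le \frac{(\dim V_j)^2}{[\Gamma:\Gamma_n]}.
\end{equation*}

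The main step combines this decomposition with Theorem \ref{thm intro:higher rank lattice vanishing of characters}. Fix $\gamma \neq e$ and $\varepsilon > 0$. Because $\Gamma$ is center-free, Theorem \ref{thm intro:higher rank lattice vanishing of characters} implies that the set $F_\varepsilon := \{\psi \in \Ch{\Gamma} : |\psi(\gamma)| \ge \varepsilon/2\}$ is finite --- otherwise it would contain a sequence of pairwise distinct characters contradicting that theorem. Splitting the sum for $\varphi_n(\gamma)$ according to whether $\psi_j \in F_\varepsilon$, using $|\psi_j(\gamma)| \le 1$ for $\psi_j \in F_\varepsilon$ and $|\psi_j(\gamma)| < \varepsilon/2$ otherwise, one obtains
\begin{equation*}
|\varphi_n(\gamma)| \le \sum_{\psi_j \in F_\varepsilon} c_j^{(n)} + \frac{\varepsilon}{2}.
\end{equation*}
Since $\Gamma_n$ is strictly descending we have $[\Gamma:\Gamma_n] \to \infty$, so the estimate above forces each individual $c_j^{(n)}$ in the first (finite) sum to tend to $0$; for $n$ large the first sum is therefore below $\varepsilon/2$, and hence $|\varphi_n(\gamma)| < \varepsilon$. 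This establishes pointwise convergence $\varphi_n \to \delta_e$.

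The genuine obstacle is Theorem \ref{thm intro:higher rank lattice vanishing of characters} itself; the rest is bookkeeping. One mild subtlety to confirm is that $F_\varepsilon$ may a priori contain characters arising from infinite-dimensional factor representations, whereas the $\psi_j$ appearing in the decomposition of $\varphi_n$ are all finite-dimensional; the dimension bound on $c_j^{(n)}$ is therefore legitimately available for each term that actually contributes to the sum over $\psi_j \in F_\varepsilon$, and the infinite-dimensional characters in $F_\varepsilon$ simply do not arise.
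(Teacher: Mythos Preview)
Your argument is correct and complete in substance. One small point to make explicit: Lemma~\ref{lem:taking character to its GNS is Fell continuous} gives $\pi_{\varphi_n}\to\lambda_\Gamma$, but $\pi_{\varphi_n}$ is not literally $\pi_n=L^2(\Gamma/\Gamma_n)$; rather $\pi_{\varphi_n}\cong\bigoplus_j V_j^{\oplus d_j}$ while $\pi_n\cong\bigoplus_j V_j^{\oplus m_j^{(n)}}$. Since the two representations have the same irreducible constituents they are quasi-equivalent, hence weakly equivalent, and therefore lie in exactly the same Fell open sets. So the conclusion $\pi_n\to\lambda_\Gamma$ does follow, but this deserves a sentence.

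Your route is genuinely different from the paper's. The paper argues by subsequences: given any subsequence, it extracts a further subsequence of pairwise non quasi-equivalent quasi-regular representations (using that a failure would trap all $\Gamma_{n_k}$ above a fixed finite-index kernel, contradicting strict descent), and then invokes Theorem~\ref{thm intro: convergence of reps}. Your proof bypasses Theorem~\ref{thm intro: convergence of reps} entirely and works directly with Theorem~\ref{thm intro:higher rank lattice vanishing of characters}: you exploit the explicit convex decomposition of the normalized trace together with the multiplicity bound $m_j^{(n)}\le d_j$ from Frobenius reciprocity to control each coefficient $c_j^{(n)}$ uniformly. The paper's approach is shorter given that Theorem~\ref{thm intro: convergence of reps} is already in hand; your approach is more self-contained and yields a transparent quantitative estimate, namely that the contribution of any fixed finite-dimensional character decays like $d_j^2/[\Gamma:\Gamma_n]$.
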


Corollary \ref{cor:for descending stuff}  can also be deduced   as a consequence of the Benjamini--Schramm convergence of the sequence of Schreier graphs of $\Gamma/\Gamma_n$ to the Cayley graph of the group $\Gamma$, which in turn follows from \cite{abert2017growth}. Note that the quasi-regular representations $L^2(G/\Gamma_n)$ of the semisimple Lie group $G$  converge to its regular representation $\lambda_G$,  by the continuity of induction.

\subsection*{Spectral gap and the   simplex of traces}

Let $\Gamma$ be a countable group. The compact and   convex space of traces on the  group $\Gamma$ is   a metrizable \emph{Choquet simplex},\footnote{We will sometimes use the shorter terminology \emph{simplex} to refer to Choquet simplices. We refer to \cite{phelps2001lectures} for the general theory of simplices.}   i.e. every trace $\varphi \in \Tr{\Gamma}$ can be expressed as  the barycenter of a \emph{unique} Borel probability measure supported on the set of its extreme points $\Ch{\Gamma}$. A  Choquet simplex is called \emph{Bauer} if its set of  extreme points is closed (and hence compact).

Our strategy to prove Theorem \ref{thm intro:higher rank lattice vanishing of characters} is to exploit the  geometric structure of the simplex $\Tr{\Gamma}$. First, we show that the simplex $\Tr{\Gamma}$ is Bauer.  Next,  we show that every   non-central character is an   isolated point of the subset of extreme points of this Bauer simplex. These two facts immediately imply that every sequence of pairwise distinct characters of the group $\Gamma$ must converge to a character supported on the center.

To  pinpoint closed subsets of the space of characters  $\Ch{\Gamma}$  we make use of  the notion of property $\mathrm{T}(\mathcal{R})$. Let  $\mathcal{R}$ be a set of equivalence classes of unitary  representations of the group $\Gamma$. For each unitary representation $\pi \in \mathcal{R}$ let  $\pi_0$ denote its subrepresentation obtained by taking the orthogonal complement to the subspace of invariant vectors. The group $\Gamma$ has \emph{property $\mathrm{T}(\mathcal{R})$} if the trivial representation $1$ is an isolated point of the set 
$\{ \pi_0 : \pi \in \mathcal{R} \} \cup \{ 1 \}    $
with respect to the Fell topology. In particular, if the family $\mathcal{R}$ consists of all equivalence classes of irreducible unitary representations then  property $\mathrm{T}(\mathcal{R})$ is the same as  Kazdhan's property (T). This generalized property was first introduced in \cite{lubotzky1989variants}. See also \S\ref{sec:convergence of traces}  below for further discussion. 
Denote
\begin{equation}
\ChFam{\Gamma}{\mathcal{R}} = \{ \varphi \in \Ch{\Gamma} \: : \: \pi_\varphi \in \mathcal{R} \}
\end{equation}
where $\pi_\varphi$ is the unitary representation associated to the character $\varphi$ via the GNS construction, see  \S\ref{sec:traces and characters}.
 When discussing the subset $\ChFam{\Gamma}{\mathcal{R}}$  we will be making a very mild assumption, namely that the family $\mathcal{R}$  is so called \emph{hereditary}, a technical notion defined in  \S\ref{sec:convergence of traces}.
The first ingredient towards Theorem \ref{thm intro:higher rank lattice vanishing of characters} and its generalizations is the following.


\begin{thm}
\label{thm intro:Bauer-simplex-face-characters}
    Assume that the countable group $\Gamma$ has property $\mathrm{T}(\mathcal{R})$. Then $\overline{\ChFam{\Gamma}{\mathcal{R}}} \subset \Ch{\Gamma}$. Moreover, the closed convex hull 
    $
    \overline{\mathrm{conv} \, \Ch{\Gamma;\mathcal{R}}}$ is a closed face of the simplex $\Tr{\Gamma}$ and is a Bauer simplex.
\end{thm}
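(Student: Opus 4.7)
My plan is to reduce the three assertions to a single central inclusion. The face and Bauer simplex properties follow from $\overline{\ChFam{\Gamma}{\mathcal{R}}} \subseteq \Ch{\Gamma}$ by routine Choquet theory. Set $S := \overline{\ChFam{\Gamma}{\mathcal{R}}}$ and $C := \overline{\conv{S}}$. Milman's theorem gives $\Ext{C} \subseteq S$, while points of $S$ remain extreme in $C$ because they are already extreme in the larger simplex $\Tr{\Gamma}$; hence $\Ext{C} = S$ is closed, so $C$ is a Bauer simplex. Uniqueness of the Choquet decomposition in $\Tr{\Gamma}$ then forces $C$ to be a face: any decomposition $\omega = t\psi_1 + (1-t)\psi_2$ in $\Tr{\Gamma}$ of a point $\omega \in C$ must coincide with its unique representing decomposition supported on $S \subseteq \Ext{\Tr{\Gamma}}$, forcing $\psi_1,\psi_2 \in C$.

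The substantive step is the inclusion itself. Let $\omega_n \in \ChFam{\Gamma}{\mathcal{R}}$ converge pointwise to $\omega \in \Tr{\Gamma}$, and assume for contradiction that the von Neumann algebra $M_\omega = \pi_\omega(\Gamma)''$ from the GNS construction is not a factor. Choose a self-adjoint $z \in Z(M_\omega)$ with $\tau_\omega(z) = 0$ and $\eta := z\xi_\omega \neq 0$. Since $z$ is central, $\eta \in L^2(M_\omega) \ominus \mathbb{C}\xi_\omega$ is fixed by the conjugation representation $\Ad\, \pi_\omega$ of $\Gamma$ on $L^2(M_\omega)$.

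The heart of the argument is to transfer this almost-invariant vector to the approximating characters. By density of $\mathbb{C}[\Gamma]\xi_\omega$ in $L^2(M_\omega)$, pick a finite linear combination $v = \sum_h c_h\, \pi_\omega(h)\xi_\omega$ with $\|v - \eta\|_2 < \delta$ for a small $\delta > 0$ to be chosen; then $\|\Ad \pi_\omega(\gamma)\, v - v\|_2 < 2\delta$ for every $\gamma \in \Gamma$. Set $v_n := \sum_h c_h\, \pi_{\omega_n}(h)\xi_{\omega_n} \in L^2(M_{\omega_n})$. The key observation is that each of the quantities $\|v_n\|_2^2$, $\langle v_n, \xi_{\omega_n}\rangle$, and $\|\Ad \pi_{\omega_n}(\gamma)\, v_n - v_n\|_2^2$ for fixed $\gamma$ is one and the same finite $\mathbb{C}$-linear combination of values of $\omega_n$ on words in $\Gamma$, so each converges to its $\omega$-counterpart as $n \to \infty$. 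Consequently, for every compact $K \subseteq \Gamma$ and $n$ sufficiently large, the projection $\tilde v_n$ of $v_n$ onto $(\mathbb{C}\xi_{\omega_n})^\perp$ satisfies $\|\tilde v_n\|_2 \geq \|\eta\|_2/2$ and $\|\Ad \pi_{\omega_n}(\gamma)\, \tilde v_n - \tilde v_n\|_2 \leq 3\delta$ for every $\gamma \in K$. Because $\omega_n$ is a character, $M_{\omega_n}$ is a factor, so $\Ad \pi_{\omega_n}$ has no nonzero invariants in $(\mathbb{C}\xi_{\omega_n})^\perp$. Property $\mathrm{T}(\mathcal{R})$ then supplies a compact $K_0$ and $\epsilon_0 > 0$ precluding any $(K_0, \epsilon_0)$-almost invariant vector in this complement, uniformly in $n$. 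Choosing $\delta < \epsilon_0 \|\eta\|_2 /6$ and $K = K_0$ produces the desired contradiction.

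The main obstacle I anticipate is the hereditarity bookkeeping: property $\mathrm{T}(\mathcal{R})$ must apply uniformly to the adjoint representations $\Ad\, \pi_{\omega_n}$ restricted to $(\mathbb{C}\xi_{\omega_n})^\perp$, which requires that the notion of hereditary family set up later in the paper really does place these representations in $\mathcal{R}$. Should the definition prove narrower, I would replace the adjoint construction by a tensor product variant $\pi_{\omega_n} \otimes \overline{\pi_{\omega_n}}$ whose membership in $\mathcal{R}$ is more direct, witness the obstruction coming from $z$ as an appropriate Hilbert--Schmidt vector in $\mathcal{H}_{\omega_n} \otimes \overline{\mathcal{H}_{\omega_n}}$, and adapt the almost-invariance estimate accordingly.
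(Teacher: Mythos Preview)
Your proposal is correct and takes essentially the same approach as the paper: both arguments use the conjugation representation $c_\varphi$ (your $\Ad\pi_\omega$), the characterization of characters via one-dimensionality of $c_\varphi$-invariants, and the uniform spectral gap coming from property $\mathrm{T}(\mathcal{R})$, and both deduce the face and Bauer assertions from Choquet uniqueness together with Milman's theorem exactly as you outline. The only cosmetic difference is that the paper recasts the spectral-gap bound $\|c_\varphi(a)|_{v_\varphi^\perp}\|\le\beta^{1/2}$ as a family of pointwise inequalities in $\varphi$ (their Lemma~\ref{lem:norm estimate for conjugation representation}), so that the passage to the limit is automatic rather than via your explicit group-ring approximation; your hereditarity worry is exactly what the paper's dual-closed and diagonal-closed axioms are set up to handle, ensuring $c_\varphi\in\mathcal{R}$ whenever $\pi_\varphi\in\mathcal{R}$.
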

The second part of Theorem \ref{thm intro:Bauer-simplex-face-characters} follows from its first part purely abstractly. Specializing to the case where the family $\mathcal{R}$ consists of all   equivalence classes of separable unitary representations  we immediately obtain the following.

\begin{cor}
\label{cor intro: Kazhdan group has Bauer simplex}
If the countable group $\Gamma$ has  Kazhdan's property (T) 
then $\Tr{\Gamma}$ is a Bauer simplex.
\end{cor}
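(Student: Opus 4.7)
The plan is to deduce this corollary directly from Theorem \ref{thm intro:Bauer-simplex-face-characters} by specializing the family $\mathcal{R}$ to be the class of all (equivalence classes of) separable unitary representations of $\Gamma$. First I would observe that this choice of $\mathcal{R}$ is tautologically hereditary, since passing to subrepresentations and other natural operations preserves separability and unitarity. With this choice we automatically have $\ChFam{\Gamma}{\mathcal{R}} = \Ch{\Gamma}$, because the GNS representation $\pi_\varphi$ attached to any character is cyclic and therefore separable.

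The key identification to establish is that Kazhdan's property (T) is precisely property $\mathrm{T}(\mathcal{R})$ for this maximal family. This is the standard reformulation of property (T) in terms of almost invariant vectors: $\Gamma$ has property (T) if and only if every unitary representation that has almost invariant vectors already has a nonzero invariant vector. Translated into the Fell topology, this is exactly the statement that the trivial representation $1$ is isolated in $\{\pi_0 : \pi \in \mathcal{R}\} \cup \{1\}$. Once this identification is in place, Theorem \ref{thm intro:Bauer-simplex-face-characters} immediately gives that $\overline{\mathrm{conv}\, \Ch{\Gamma}}$ is a Bauer simplex and a closed face of $\Tr{\Gamma}$.

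To finish, I would invoke the Choquet representation theorem for the metrizable simplex $\Tr{\Gamma}$: every trace is the barycenter of a (unique) Borel probability measure on its extreme points, which are by definition the characters. Hence $\Tr{\Gamma} \subseteq \overline{\mathrm{conv}\, \Ch{\Gamma}}$, while the reverse inclusion is trivial, so $\Tr{\Gamma} = \overline{\mathrm{conv}\, \Ch{\Gamma}}$. Alternatively one can simply apply Krein--Milman to the compact convex set $\Tr{\Gamma}$. Combined with the previous paragraph, this exhibits $\Tr{\Gamma}$ itself as a Bauer simplex, as desired.

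The main obstacle in the argument, such as it is, lies entirely in the verification that the abstract Fell-topological definition of property $\mathrm{T}(\mathcal{R})$ in the excerpt specializes correctly to Kazhdan's property (T) when $\mathcal{R}$ is taken to consist of all separable unitary representations; everything else in the deduction is formal from Theorem \ref{thm intro:Bauer-simplex-face-characters} and from basic Choquet theory.
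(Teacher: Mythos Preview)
Your proposal is correct and follows exactly the paper's approach: the corollary is stated immediately after Theorem \ref{thm intro:Bauer-simplex-face-characters} as the specialization to $\mathcal{R} = \mathrm{Rep}_{\aleph_0}(\Gamma)$, the family of all separable unitary representations, for which property $\mathrm{T}(\mathcal{R})$ is Kazhdan's property (T) and $\ChFam{\Gamma}{\mathcal{R}} = \Ch{\Gamma}$. Your additional observation that $\Tr{\Gamma} = \overline{\mathrm{conv}\,\Ch{\Gamma}}$ via Krein--Milman is the only detail the paper leaves implicit.
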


This statement is inspired by   the well-known result of Glasner and Weiss \cite{glasner1997kazhdan} which deals with the geometry of the simplex of invariant probability measures for a $\Gamma$-action on a compact space.
We remark that Corollary \ref{cor intro: Kazhdan group has Bauer simplex} can specifically be deduced from the recent deep work of Kennedy and Shamovich in which the authors develop a non-commutative Choquet theory and extend the result of \cite{glasner1997kazhdan} to that realm \cite[\S 14]{kennedy2022noncommutative}.

Note that there are non-Kazhdan groups whose simplex of traces is also Bauer. For instance, if $\Gamma$ is discrete and abelian, then $\Ch{\Gamma}$ coincides with its Pontryagin dual $\widehat{\Gamma}$ which is compact. On the other hand, the simplex of  traces of many nilpotent groups, among them  the discrete Heisenberg group, is not Bauer \cite[Example 1.9]{bader-iti2022charmenability}. We do not know of any precise characterization of the family of groups whose simplex of traces is Bauer.

The second ingredient of the proof of Theorem \ref{thm intro:higher rank lattice vanishing of characters} deals with the geometry of the subset of finite-dimensional traces.

\begin{prop}
\label{prop:finite dim is not an accumulation point}
 Assume that the countable group $\Gamma$ has property $\mathrm{T}(\mathcal{R})$. Then any finite-dimensional character $\varphi \in \overline{\ChFam{\Gamma}{\mathcal{R}}}$ is an isolated point of the subset $\overline{\ChFam{\Gamma}{\mathcal{R}}}$.
\end{prop}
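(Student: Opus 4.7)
The plan is to argue by contradiction using a tensor product trick. Suppose $\varphi$ were not an isolated point of $\overline{\ChFam{\Gamma}{\mathcal{R}}}$. Then we could extract a sequence of pairwise distinct characters $\varphi_n \in \overline{\ChFam{\Gamma}{\mathcal{R}}}$, all distinct from $\varphi$, converging pointwise to $\varphi$. By the Fell continuity of the GNS construction (Lemma \ref{lem:taking character to its GNS is Fell continuous}), the associated factor representations $\pi_{\varphi_n}$ converge to $\pi_\varphi$ in the Fell topology.

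The central construction is to form the tensor product representations $\rho_n := \pi_{\varphi_n} \otimes \overline{\pi_\varphi}$. Because $\pi_\varphi$ is finite-dimensional, everything fits comfortably in the space of Hilbert--Schmidt operators and tensoring is well-behaved. The limiting representation $\rho := \pi_\varphi \otimes \overline{\pi_\varphi}$ contains the trivial representation of $\Gamma$, since the identity endomorphism of $H_{\pi_\varphi}$ is a $\Gamma$-invariant vector there. Moreover, tensoring with a finite-dimensional representation is Fell-continuous, so $\rho_n \to \rho$ in Fell, and in particular the trivial representation is weakly contained in $\{\rho_n\}$; concretely, each $\rho_n$ (for large $n$) admits almost invariant vectors, realizable for instance as $\tfrac{1}{\sqrt{d}}\sum_{i=1}^d f_{n,i}\otimes\overline{e_i}$, where $\{e_i\}$ is an orthonormal basis of $H_{\pi_\varphi}$ and the $f_{n,i} \in H_{\pi_{\varphi_n}}$ are Fell-approximants obtained from the matrix coefficients of $\pi_\varphi$.

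Now I invoke property $\mathrm{T}(\mathcal{R})$. The hereditary hypothesis on $\mathcal{R}$ is arranged precisely so that $\rho_n \in \mathcal{R}$ whenever $\pi_{\varphi_n} \in \mathcal{R}$ and $\pi_\varphi$ is finite-dimensional. Property $\mathrm{T}(\mathcal{R})$ therefore promotes the almost invariant vectors in $\rho_n$ to genuine invariant vectors for all sufficiently large $n$. Such an invariant vector is a nonzero bounded intertwiner $T_n : H_{\pi_\varphi} \to H_{\pi_{\varphi_n}}$, embedding a (non-zero) subrepresentation of $\pi_\varphi$ inside $\pi_{\varphi_n}$. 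Since $\pi_{\varphi_n}$ is a factor representation, it is quasi-equivalent to each of its non-zero subrepresentations; combined with the finite-dimensionality inherited from $\pi_\varphi$, this forces $\pi_{\varphi_n}$ to be quasi-equivalent to $\pi_\varphi$, and hence $\varphi_n = \varphi$, contradicting our choice of sequence.

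I expect the principal difficulty to lie in the rigorous passage from Fell convergence $\pi_{\varphi_n} \to \pi_\varphi$ to genuine uniform almost invariance in $\rho_n$. Concretely, one must, for every finite $F \subset \Gamma$ and $\varepsilon > 0$, simultaneously Fell-approximate all matrix coefficients $\langle \pi_\varphi(\cdot)e_i, e_j\rangle$ on $F$, and then estimate the defect in $\rho_n$-invariance of the resulting ``identity-like'' tensor. A secondary point is to ensure that the hereditary family $\mathcal{R}$ is indeed closed under the tensor construction with finite-dimensional representations, in exactly the form formulated in \S\ref{sec:convergence of traces}.
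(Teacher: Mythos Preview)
Your overall strategy---tensor with the contragredient, use Fell continuity to produce almost invariant vectors, upgrade via property $\mathrm{T}(\mathcal{R})$ to genuine intertwiners, and conclude from factoriality---is exactly the mechanism behind the paper's proof (encapsulated in Lemma~\ref{lem:finite dim rep is isolated}). However, there is a real gap at the point where you assert that $\rho_n = \pi_{\varphi_n}\otimes\overline{\pi_\varphi}\in\mathcal{R}$. The hereditary axioms do \emph{not} give this: the diagonal-closed condition requires that the restriction of the $\Gamma\times\Gamma$-representation to \emph{each} factor lie in $\mathcal{R}$, and the restriction to the second factor is (a multiple of) $\overline{\pi_\varphi}$. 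But $\varphi$ is only assumed to lie in the closure $\overline{\ChFam{\Gamma}{\mathcal{R}}}$, so a priori $\pi_\varphi$ need not belong to $\mathcal{R}$. Your ``secondary point'' is therefore not a routine verification; as written, the step fails.

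The paper sidesteps this by never tensoring with $\pi_\varphi$ at all. Instead (see Lemma~\ref{lem:finite dim rep is isolated}) one first observes that $\rho\otimes\pi_n^*\to\rho\otimes\rho^*\ni 1$ since tensoring with the finite-dimensional $\rho=\pi_\varphi$ is Fell-continuous, and then replaces the left factor $\rho$ by a sufficiently fast subsequence $\pi_{m_n}$ of the $\pi_n$'s themselves, so that $\pi_{m_n}\otimes\pi_n^*\to 1$. Now both tensor factors lie in $\mathcal{R}$, the diagonal-closed axiom applies, and property $\mathrm{T}(\mathcal{R})$ yields invariant vectors. This first forces the $\pi_{\varphi_n}$ to be finite-dimensional, and a second application (with a shifted subsequence $m_n'=m_n+1$) then forces the sequence of underlying irreducibles to stabilize. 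Your argument can be repaired along these lines; the missing idea is this ``tensor two terms of the sequence against each other'' trick. A minor related point: you take $\varphi_n\in\overline{\ChFam{\Gamma}{\mathcal{R}}}$ but then tacitly assume $\pi_{\varphi_n}\in\mathcal{R}$; you should take $\varphi_n\in\ChFam{\Gamma}{\mathcal{R}}$ from the outset, which suffices by density.
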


Specializing once again  to the case where the family $\mathcal{R}$ consists of all equivalence classes of separable unitary representations, we immediately obtain  the following.

\begin{cor}
\label{cor intro: Kazhdan group has isolated fd chars}
If the countable group $\Gamma$ has Kazhdan's property $(\mathrm{T})$ then any finite-dimensional character of $\Gamma$ is an isolated point of  $\Ch{\Gamma}$.
\end{cor}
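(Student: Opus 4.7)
The plan is to deduce the corollary from Proposition \ref{prop:finite dim is not an accumulation point} by a direct specialization. Specifically, I would take $\mathcal{R}$ to be the family of equivalence classes of all separable unitary representations of $\Gamma$. With this choice, property $\mathrm{T}(\mathcal{R})$ is, by definition, exactly Kazhdan's property (T): requiring the trivial representation to be isolated among representations without invariant vectors, drawn from $\mathcal{R}$, matches the usual formulation since any representation may be decomposed on a separable invariant subspace for the purpose of detecting almost invariant vectors.

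Next I would verify the identity $\ChFam{\Gamma}{\mathcal{R}} = \Ch{\Gamma}$. Because $\Gamma$ is countable, the GNS Hilbert space $\Hil_\varphi$ attached to any trace $\varphi$ is separable: it is the completion of the countable-dimensional space $\mathbb{C}[\Gamma]$ modulo the GNS ideal. Hence $\pi_\varphi \in \mathcal{R}$ for every $\varphi \in \Ch{\Gamma}$, giving the desired equality. I would also need to check that the family $\mathcal{R}$ of separable representations is hereditary in the paper's sense (to be defined in \S\ref{sec:convergence of traces}); this should be immediate, since the standard closure operations used in such definitions — passing to subrepresentations, direct summands, countable direct sums — all preserve separability.

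Combining these observations, Proposition \ref{prop:finite dim is not an accumulation point} applies and yields that every finite-dimensional character $\varphi \in \overline{\Ch{\Gamma}}$ is an isolated point of $\overline{\Ch{\Gamma}}$. Since $\Ch{\Gamma} \subseteq \overline{\Ch{\Gamma}}$, any such $\varphi$ lying in $\Ch{\Gamma}$ is a fortiori isolated in $\Ch{\Gamma}$, which is exactly the statement of the corollary.

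I do not expect a genuine obstacle here: the real content is all in Proposition \ref{prop:finite dim is not an accumulation point}, and the corollary is a routine unpacking. The only point meriting care is confirming that the three technical notions — ``hereditary family,'' ``$\ChFam{\Gamma}{\mathcal{R}}$,'' and ``property $\mathrm{T}(\mathcal{R})$'' — specialize to the familiar Kazhdan objects when $\mathcal{R}$ is taken to consist of all separable unitary representations. Once these compatibilities are in hand, the proof is a one-line invocation of the proposition.
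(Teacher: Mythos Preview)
Your proposal is correct and follows exactly the paper's approach: the corollary is obtained by specializing Proposition \ref{prop:finite dim is not an accumulation point} to the family $\mathcal{R} = \mathrm{Rep}_{\aleph_0}(\Gamma)$ of all separable unitary representations, using that GNS representations of a countable group are separable so that $\ChFam{\Gamma}{\mathcal{R}} = \Ch{\Gamma}$. One minor remark: the paper's notion of a \emph{hereditary} family requires being subrepresentation-closed, dual-closed, and diagonal-closed (not closed under countable direct sums as you suggested), but $\mathrm{Rep}_{\aleph_0}(\Gamma)$ evidently satisfies all three, so this does not affect your argument.
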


This is reminiscent of the definition of property (T) saying that finite-dimensional representations are isolated in the Fell topology. In fact, it is not hard to deduce Corollary \ref{cor intro: Kazhdan group has isolated fd chars} directly from that definition. The converse direction of Corollary \ref{cor intro: Kazhdan group has isolated fd chars} is false however,  as can be seen in some of  the examples following Theorem \ref{thm intro:higher rank lattice vanishing of characters}. 

\subsection*{On relative traces}

Finally, we discuss a relative version of Theorem \ref{cor intro: Kazhdan group has Bauer simplex}.

\begin{thm}
\label{thm intro: characteristic characters is closed}
Let $\Lambda$ be a countable group admitting an action $\theta:\Lambda \to \Aut{\Gamma}$ on the countable group $\Gamma$. Assume that $\Lambda$ has Kazhdan's property (T) and that $\mathrm{Inn}(\Gamma) \le \theta(\Lambda)$. Then 
\begin{itemize}
    \item the space  $\relTr{\Lambda}{\Gamma}$ consisting of all $\Lambda$-invariant traces on the group $\Gamma$ is a Bauer simplex, and
    \item finite-dimensional extremal $\Lambda$-invariant traces are isolated points of the extreme point set  $\relCh{\Lambda}{\Gamma} = \partial \relTr{\Lambda}{\Gamma}$.
\end{itemize}
\end{thm}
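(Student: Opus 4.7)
The plan is to address the two assertions separately, in both cases using the observation that $\Lambda$-invariant traces on $\Gamma$ are the same as $\Lambda$-invariant tracial states on the group $C^*$-algebra $C^*(\Gamma)$ on which $\Lambda$ acts through $\theta$. The hypothesis $\mathrm{Inn}(\Gamma) \le \theta(\Lambda)$ ensures that $\Lambda$-invariance of a positive definite normalized function on $\Gamma$ automatically implies conjugation-invariance, so that $\relTr{\Lambda}{\Gamma}$ coincides with the closed face of $\Lambda$-invariant states inside the simplex of tracial states on $C^*(\Gamma)$.

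For the Bauer property, I would invoke the non-commutative Glasner--Weiss theorem of Kennedy--Shamovich \cite[\S 14]{kennedy2022noncommutative}: since $\Lambda$ has property (T) and acts on the unital $C^*$-algebra $A = C^*(\Gamma)$, the simplex of $\Lambda$-invariant tracial states on $A$ is Bauer. This gives the first bullet immediately. An alternative approach, mirroring the proof of Theorem \ref{thm intro:Bauer-simplex-face-characters}, would organize the argument around the semidirect product $G = \Gamma \rtimes_\theta \Lambda$ and observe that any $\varphi \in \relTr{\Lambda}{\Gamma}$ produces via GNS a unitary $G$-representation $\sigma_\varphi$ on $\mathcal{H}_\varphi$ with $\xi_\varphi$ being $\pi_\varphi(\Gamma)$-cyclic and $U_\varphi(\Lambda)$-fixed, with extremality of $\varphi$ translating to $(Z(\pi_\varphi(\Gamma)''))^\Lambda = \mathbb{C}$, and with property (T) of $\Lambda$ playing the role that property $\mathrm{T}(\mathcal{R})$ of $\Gamma$ plays in the absolute setting.

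For the isolation of finite-dimensional extremals, I would adapt the argument of Proposition \ref{prop:finite dim is not an accumulation point}. Let $\varphi \in \relCh{\Lambda}{\Gamma}$ be finite-dimensional, so $\sigma_\varphi$, and in particular the $\Lambda$-representation $U_\varphi$, is finite-dimensional. If $\varphi_n \in \relCh{\Lambda}{\Gamma}$ converges to $\varphi$ with $\varphi_n \ne \varphi$ for all $n$, then the Fell continuity of the GNS construction (Lemma \ref{lem:taking character to its GNS is Fell continuous}) yields Fell convergence $\sigma_{\varphi_n} \to \sigma_\varphi$ of the associated $G$-representations, and in particular Fell convergence $U_{\varphi_n} \to U_\varphi$ of their $\Lambda$-parts. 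Since property (T) of $\Lambda$ forces finite-dimensional $\Lambda$-representations to be Fell-isolated, the representation $U_{\varphi_n}$ must contain a sub-representation equivalent to $U_\varphi$ for large $n$. Combined with Fell convergence on the $\Gamma$-side, this produces a $G$-sub-representation of $\sigma_{\varphi_n}$ equivalent to $\sigma_\varphi$; extremality of $\varphi_n$ in $\relTr{\Lambda}{\Gamma}$ together with finite-dimensionality then forces $\sigma_{\varphi_n} \simeq \sigma_\varphi$, and hence $\varphi_n = \varphi$, a contradiction.

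The main obstacle I foresee lies in the second part: carefully transferring the Fell-isolation of the $\Lambda$-representation $U_\varphi$ to the $G$-representation $\sigma_\varphi$, and then exploiting extremality of $\varphi_n$ in $\relTr{\Lambda}{\Gamma}$ to preclude strict containment $\sigma_\varphi \subsetneq \sigma_{\varphi_n}$. The first part, by contrast, follows by direct invocation of Kennedy--Shamovich.
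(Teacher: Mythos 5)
For the first bullet, your Kennedy--Shamovich invocation is plausible but is a genuinely different route from the paper's: the paper proves Bauer-ness directly by introducing the representation $\alpha_\varphi:\Lambda\to\mathrm{U}(\mathcal{H}_\varphi)$ that extends the adjoint action $x\mapsto U_\varphi(\lambda)xU_\varphi(\lambda)^{-1}$ from $M_\varphi v_\varphi$ to $\mathcal{H}_\varphi$, characterizing relative characters by $\dim_\mathbb{C}\mathcal{H}_\varphi^{\alpha_\varphi}=1$ (Proposition \ref{prop: relative character by dimension of invariant space}, resting on the density Lemma \ref{lem: a lemma on vector spaces}), and showing via the explicit trace inequality of Lemma \ref{lem:norm estimate for general representation} that this condition survives pointwise limits once $\Lambda$ has a spectral gap. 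Your alternative sketch gestures at ``extremality translates to $(Z(M_\varphi))^\Lambda=\mathbb{C}$'' but does not supply the density argument that turns this von Neumann-algebraic condition into a Fell-topology-closed one; the K--S route is cleaner if their theorem indeed covers an external Kazhdan group acting on $C^*(\Gamma)$, which you should verify since the paper cites K--S only for the inner-action (absolute) case.

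For the second bullet there is a real gap, and it is essentially the one you flagged. The Kazhdan isolation statement \cite[Theorem 1.2.5]{bekka2008kazhdan} concerns \emph{irreducible} finite-dimensional representations in $\widehat{\Lambda}$, so without knowing that $U_\varphi$ is irreducible, your claim that $U_{\varphi_n}$ must eventually contain a sub-representation equivalent to $U_\varphi$ is not justified, and neither is the step welding a $\Lambda$-subrepresentation with a $\Gamma$-subrepresentation into a $G$-subrepresentation of $\sigma_{\varphi_n}$. The paper's key input, which your plan is missing, is Proposition \ref{prop:understanding the center}: since $\mathrm{Inn}(\Gamma)\le\theta(\Lambda)$, the commutant of $N_\varphi=(\pi_\varphi(\Gamma)\cup U_\varphi(\Lambda))''$ reduces to $U_\varphi(\Lambda)'$, so the relative-character condition $N'_\varphi=\mathbb{C}\cdot\mathrm{Id}$ forces $U_\varphi$ (and each $U_{\varphi_n}$) to be \emph{irreducible}. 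With this in hand, the isolation of the finite-dimensional irreducible $U_\varphi$ immediately yields $U_{\varphi_n}=U_\varphi$ for large $n$, hence $\pi_{\varphi_n}=\pi_\varphi$ and $\varphi_n=\varphi$ --- no detour through the semidirect product $G$ or through extremality of $\varphi_n$ is needed. You should incorporate this observation; without it the argument for the second bullet does not close.
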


To illustrate Theorem \ref{thm intro: characteristic characters is closed} consider the free abelian group $\Gamma=\Z^d$ for some $d \ge 3$. Its group of automorphisms   $\Lambda=\mathrm{Aut}(\Gamma) \cong \mathrm{GL}_d(\Z)$ has property (T).  It is an immediate consequence  of Theorem \ref{thm intro: characteristic characters is closed} that 
 any sequence of pairwise distinct ergodic $\mathrm{GL}_d(\Z)$-invariant measures on the torus $\T^d$ converges to the Haar measure in the weak-$*$ topology, see Corollary \ref{cor:torus-measures-limit}. This fact is well  known to experts  for all $d \ge 2$.  In any case, combining this information with vanishing results for  arithmetic groups (not necessarily semisimple ones, see Theorem \ref{thm:general arithmetic}) gives   the asymptotic vanishing of sequences of characters on $\mathrm{GL}_d(\Z)\ltimes \Z^d$ (Theorem \ref{thm:semidirect product}).

Next, consider   the free group $\Gamma=F_d$    of rank $d\geq 2$. Let   $\Lambda=\Aut{F_d}$ be its group of   automorphisms. This can be   viewed as a non-commutative analogue of the   discussion in the previous paragraph. Unlike the simplex $\Tr{\Z^d}$, the space of traces $\Tr{F_d}$   is as far as possible  from being a Bauer simplex --- it is a Poulsen simplex, meaning that  $\overline{\Ch{F_d}} = \Tr{F_d}$ \cite{iti-raz-Orovitz}. However, the celebrated results of \cite{kaluba2019aut,kaluba2021property,nitsche2020computer} say that the group of automorphisms $\mathrm{Aut}(F_d)$ has property (T) provided  $d \ge 4$.\footnote {The group $\mathrm{Aut}(F_d)$ is not Kazhdan for $1 \le d \le 3$.} So Theorem \ref{thm intro: characteristic characters is closed} can be brought to bear.
Assuming  $d \ge 4$ we conclude that    the space $\relTr{\mathrm{Aut}(F_d)}{F_d}$ is a Bauer simplex  after all.

$\operatorname{Aut}(F_d)$-invariant positive definite functions on the free group $F_d$ have recently attracted significant interest  with connection to word measures on groups and the Wiegold conjecture,  see \cite{collins2019automorphism} for a survey.
Such positive definite functions are in fact $\operatorname{Out}(F_d)$-invariant \emph{traces}. The above result  is  an initial step towards understanding their structure.

\subsection*{Structure of the paper}
In \S\ref{sec:traces and characters} and \S\ref{sec:conjugation representation} we discuss general properties of traces and characters. In \S\ref{sec:Traces and spectral gap} we study various notions of spectral gap of traces. These notions correspond to closed conditions on the space of all traces. This analysis is used in \S\ref{sec:convergence of traces} to discuss the geometry of the simplex of traces on groups with property $\mathrm{T}(\mathcal{R})$. In  particular we tackle Theorem \ref{thm intro:Bauer-simplex-face-characters} and Proposition \ref{prop:finite dim is not an accumulation point}. Applications to character and representation limits are discussed in \S\ref{sec:character-limits}. This is where our main results, Theorems \ref{thm intro:higher rank lattice vanishing of characters}, \ref{thm intro: convergence of reps} and their generalizations are proved. Lastly in \S\ref{sec:rel-traces} we discuss relative characters and prove  Theorem \ref{thm intro: characteristic characters is closed}.

\subsection*{Acknowledgements} A.L was supported by ISF grant 1788/22 and BSF grant No. 2022105. R.S and I.V were supported by ERC advanced grant No. 101052954.

\section{Traces and characters}
\label{sec:traces and characters}
We start with some preliminaries regarding traces and characters on groups. For a comprehensive overview of this theory we refer to \cite[Chapter 11]{bekka2020unitary}.

Let $\Gamma$ be a countable discrete group. A \emph{trace} on $\Gamma$ is a function $\varphi : \Gamma \to \mathbb{C}$ satisfying the following conditions:
\begin{enumerate}
    \item $\varphi$ is \emph{positive definite}: $\sum_{i=1}^{n} {\alpha}_i \overline{\alpha}_j\varphi(\gamma_j^{-1}\gamma_i)\geq 0$ for any $n\in \N$ and any choice of group elements $\gamma_1,...,\gamma_n\in \Gamma$ and scalars $\alpha_1,...,\alpha_n\in \C$.
    \item $\varphi$ is \emph{conjugation-invariant}: $\varphi(x^{-1}\gamma x)=\varphi(\gamma)$ for all group elements $\gamma,x\in \Gamma$.
    \item $\varphi$ is \emph{normalized}: $\varphi(e_\Gamma) = 1$.
\end{enumerate}

Let $\Tr{\Gamma}$ denote the space of all traces on the group $\Gamma$ equipped with the topology of pointwise convergence. It is not hard to see that $|\varphi(\gamma)| \le \varphi(e_\Gamma) = 1$ holds for all group elements $\gamma \in \Gamma$. As such, the set $\Tr{\Gamma}$ can be regarded as a convex weak-$*$ compact\footnote{The weak-$*$ topology  coincides with the topology of pointwise convergence on $l^\infty(\Gamma)$.} subset of $l^\infty(\Gamma)$. In particular, any trace $\varphi$ on $\Gamma$ extends uniquely to a linear functional on $\ell^1(\Gamma)$. We shall use this extension without further mention.  

Let  $\Ch{\Gamma}$ denote the subset of extreme points of the convex set   $\Tr{\Gamma}$. Elements of $\Ch{\Gamma}$ are called\footnote{Some authors refer to all traces as \enquote{characters} and to extreme traces as \enquote{indecomposable characters}.} \emph{characters} of the group $\Gamma$.

Consider some trace $\varphi \in \Tr{\Gamma}$. Associated to the trace $\varphi$ is the \emph{GNS (Gelfand--Naimark--Segal)  data} $(\pi_\varphi,\rho_\varphi,\mathcal{H}_\varphi,v_\varphi)$. Here $\mathcal{H}_\varphi$ is a Hilbert space and  $\pi_\varphi,\rho_\varphi  : \Gamma \to \mathrm{U}(\mathcal{H}_\varphi)$   is a pair of unitary representations. 
Moreover $v_\varphi \in \mathcal{H}_\varphi$ is a cyclic unit vector   for $\pi_\varphi$ as well as for $\rho_\varphi$ satisfying $\pi_\varphi(\gamma) v_\varphi = \rho_\varphi(\gamma^{-1}) v_\varphi$ for all elements $\gamma \in \Gamma$. 
The two representations $\pi_\varphi$ and $\rho_\varphi$ commute, namely $\pi_\varphi(\gamma_1) \rho_\varphi(\gamma_2) = \rho_\varphi(\gamma_2) \pi_\varphi(\gamma_1)$ for any pair of elements $\gamma_1,\gamma_2 \in \Gamma$.  Lastly 
\begin{equation}
\varphi(\gamma) = \left<\pi_\varphi(\gamma)v_\varphi,v_\varphi\right> = \left<\rho_\varphi(\gamma^{-1})v_\varphi,v_\varphi\right> \quad \forall \gamma \in \Gamma.
\end{equation}
These requirements determine the GNS data uniquely up to an isomorphism. As   the group $\Gamma$ is countable   the Hilbert space $\mathcal{H}_\varphi$ is separable for every   $\varphi \in \Tr{\Gamma}$.

Associated to the trace $\varphi \in \Tr{\Gamma}$ is the von Neumann algebra $M_\varphi$ generated by the operators $\pi_\varphi(\gamma)$ for all $\gamma \in \Gamma$. In other words $M_\varphi = \pi_\varphi(\Gamma)''\subseteq\mathrm{B}(\mathcal{H}_\varphi)$ by the von Neumann bicommutant theorem. The trace $\varphi$ is a character if and only if the von Neumann algebra $M_\varphi$ is a \emph{factor}, namely $\mathrm{Z}(M_\varphi) = \mathbb{C}$. The group $\Gamma$ admits an \emph{adjoint representation} on the von Neumann algebra $M_\varphi$ given by
\begin{equation}
\mathrm{Ad}_\gamma(x) = \pi_\varphi(\gamma) x \pi_\varphi(\gamma^{-1}) \quad \forall \gamma \in \Gamma, x \in M_\varphi.
\end{equation}

\subsection*{Dominated traces}
Let $\varphi, \psi \in \Tr{\Gamma}$ be a pair of traces. We say that the trace $\psi$ is \emph{dominated} by the trace $\varphi$ and write $\psi \le \varphi$ if  there is some $0 < \alpha \le 1$ such that the function $\varphi - \alpha \psi$ is positive definite. There is a surjective mapping from the set of non-zero operators $T \in \mathrm{Z}(M_\varphi)$ satisfying $0 < T \le 1$ to the set of traces $\psi$ dominated by the trace $\varphi$. This mapping takes such an element $T$ to the trace $\varphi_T$ given by
\begin{equation}
\label{eq:varphi_T}
    \varphi_T(\gamma) = \frac{1}{\|T^\frac{1}{2} v_\varphi\|^2} \left<\pi_\varphi(\gamma) T^{\frac{1}{2}} v_\varphi, T^{\frac{1}{2}} v_\varphi \right> \quad \forall \gamma \in \Gamma.
\end{equation}
Moreover $\varphi_{T_1} = \varphi_{T_2}$ for a pair of elements $T_1, T_2$ as above if and only if $T_1 = \alpha T_2$ for some $\alpha > 0 $. See \cite[Lemma 11.C.2]{bekka2020unitary}.




\subsection*{The trivial character} The trivial character $1 \in \Ch{\Gamma}$ is defined by $1(\gamma) = 1$ for all group elements $\gamma \in \Gamma$.

\begin{prop}
\label{prop:condition for pi to admit invariant vectors}
Let $\varphi \in \Tr{\Gamma}$ be a trace. The unitary representation $\pi_\varphi$   has invariant vectors if and only if $1 \le \varphi$.
\end{prop}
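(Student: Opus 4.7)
The plan is to apply the correspondence recorded in equation~\eqref{eq:varphi_T} between traces dominated by $\varphi$ and nonzero operators $T \in \mathrm{Z}(M_\varphi)$ with $0 < T \le 1$, specialized to the trace $1$.

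For the direction ``$\pi_\varphi$ has an invariant vector implies $1 \le \varphi$'', I would let $P$ denote the orthogonal projection of $\mathcal{H}_\varphi$ onto the closed subspace of $\pi_\varphi$-invariant vectors, which is nonzero by hypothesis. The key step is to verify $P \in \mathrm{Z}(M_\varphi)$. By construction $P$ commutes with $\pi_\varphi(\Gamma)$. Because $\rho_\varphi$ commutes with $\pi_\varphi$, the subspace of $\pi_\varphi$-invariant vectors is $\rho_\varphi$-invariant, so $P$ also commutes with $\rho_\varphi(\Gamma)$. Invoking the standard GNS commutation relation $\rho_\varphi(\Gamma)' = \pi_\varphi(\Gamma)'' = M_\varphi$, this yields $P \in M_\varphi \cap M_\varphi' = \mathrm{Z}(M_\varphi)$. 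Cyclicity of $v_\varphi$ for $\pi_\varphi$ forces $P v_\varphi \ne 0$, for otherwise $P \pi_\varphi(\gamma) v_\varphi = \pi_\varphi(\gamma) P v_\varphi = 0$ for all $\gamma$, and $P$ would vanish on a dense subspace. Since $P v_\varphi$ is a $\pi_\varphi$-invariant vector, plugging $T = P$ into~\eqref{eq:varphi_T} gives $\varphi_P(\gamma) = 1$ for every $\gamma$, so the constant trace $1$ is dominated by $\varphi$.

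For the converse, assume $1 \le \varphi$. By the surjectivity assertion cited just before the proposition, there exists $T \in \mathrm{Z}(M_\varphi)$ with $0 < T \le 1$ and $\varphi_T = 1$. Set $w := T^{1/2} v_\varphi$; it is nonzero since $T^{1/2}$ commutes with $\pi_\varphi(\Gamma)$ and $v_\varphi$ is $\pi_\varphi$-cyclic, so $w = 0$ would force $T^{1/2} = 0$. The identity $\varphi_T \equiv 1$ then rewrites as $\langle \pi_\varphi(\gamma) w, w \rangle = \|w\|^2$ for all $\gamma \in \Gamma$. Combined with $\|\pi_\varphi(\gamma) w\| = \|w\|$, the equality case of the Cauchy--Schwarz inequality forces $\pi_\varphi(\gamma) w = w$, exhibiting $w$ as a nonzero $\pi_\varphi$-invariant vector.

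The only step that is not a direct unfolding of definitions is the inclusion $P \in \mathrm{Z}(M_\varphi)$ in the forward direction; this is the main obstacle and it relies on the bimodule structure of the GNS construction, specifically on the identification $\pi_\varphi(\Gamma)' = \rho_\varphi(\Gamma)''$ coming from the fact that $v_\varphi$ is cyclic for both commuting representations. Everything else is a routine application of the formula~\eqref{eq:varphi_T} and of the equality case in Cauchy--Schwarz.
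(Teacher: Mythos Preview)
Your proof is correct and follows the same approach as the paper's: use the orthogonal projection $P$ onto the invariant vectors for the forward direction, and the vector $T^{1/2}v_\varphi$ for the converse. You in fact supply more detail than the paper does --- an explicit justification that $P\in\mathrm{Z}(M_\varphi)$ via the standard commutation identity $\rho_\varphi(\Gamma)' = M_\varphi$, and the Cauchy--Schwarz argument showing $T^{1/2}v_\varphi$ is invariant --- whereas the paper simply asserts both facts.
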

\begin{proof}

First, assume that the unitary representation $\pi_\varphi$ admits invariant vectors. The orthogonal projection $T$ to the subspace of invariant vectors is non-zero and satisfies $ 0 \le T \le 1$ as well as $T \in \mathrm{Z}(M_\varphi)$. Therefore 
 $\varphi_T \le \varphi$ and $ \varphi_T = 1 $   by the formula in Equation (\ref{eq:varphi_T}).
In the converse direction, assume that $\varphi_T = 1$ for some non-zero $T \in \mathrm{Z}(M_\varphi)$ satisfying $0 \le T \le 1$. Then $T^{\frac{1}{2}}v_\varphi \in \mathcal{H}_\varphi$ is a non-zero invariant vector.
\end{proof}

If $\varphi \in \Ch{\Gamma}$ and $\psi \in \Tr{\Gamma}$ satisfy $\psi \le \varphi$ then certainly $\varphi = \psi$. Combined with Proposition \ref{prop:condition for pi to admit invariant vectors} this  gives the following.

\begin{cor}
\label{cor:condition for a character to be trivial}
Let $\varphi \in \Ch{\Gamma}$ be a character. The unitary representation $\pi_\varphi$   has invariant vectors if and only if $\varphi $ is the trivial character $1$.
\end{cor}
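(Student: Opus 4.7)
The corollary is an immediate combination of Proposition \ref{prop:condition for pi to admit invariant vectors} with the preceding observation that domination of a character by a trace forces equality. I would structure the proof in two short directions.

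For the easy direction, if $\varphi = 1$ is the trivial character, then the GNS representation $\pi_\varphi$ is (isomorphic to) the trivial one-dimensional representation on $\mathbb{C}$, so every vector is invariant; in particular invariant vectors exist.

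For the reverse direction, suppose $\pi_\varphi$ admits a non-zero invariant vector. By Proposition \ref{prop:condition for pi to admit invariant vectors} this gives $1 \le \varphi$, meaning there is some $0 < \alpha \le 1$ such that $\varphi - \alpha \cdot 1$ is positive definite. Using that $\varphi$ is extremal (being a character) together with the preceding text's observation that a trace dominated by a character must coincide with it, I conclude $\varphi = 1$. For completeness I would spell out the extremality argument briefly: if $\alpha = 1$ then $\varphi - 1$ is a positive definite function vanishing at the identity, hence identically zero by the Cauchy--Schwarz inequality for positive definite functions; if $\alpha < 1$ then $\psi := (1-\alpha)^{-1}(\varphi - \alpha \cdot 1)$ is itself a trace, and the convex decomposition $\varphi = \alpha \cdot 1 + (1-\alpha)\psi$ together with extremality of $\varphi$ forces $\varphi = 1$.

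There is no real obstacle here; the content has already been done in Proposition \ref{prop:condition for pi to admit invariant vectors}, and all that remains is to translate \emph{domination by a character} into \emph{equality} via the definition of an extreme point, which is a standard one-line convex-geometric argument.
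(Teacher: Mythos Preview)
Your proposal is correct and follows exactly the paper's approach: the paper also deduces the corollary immediately from Proposition~\ref{prop:condition for pi to admit invariant vectors} together with the observation that if a trace $\psi$ is dominated by a character $\varphi$ then $\varphi = \psi$. Your additional unpacking of the extremality step (splitting into $\alpha = 1$ and $\alpha < 1$) is fine but unnecessary, since that implication is recorded just before the corollary.
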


Note that the statement of Corollary \ref{cor:condition for a character to be trivial} is equivalent to saying that   the unitary representation $\pi_\varphi$ is trivial.

The above statements hold true in the same way if the unitary representation $\pi_\varphi$ is replaced by the unitary representation $\rho_\varphi$.

\subsection*{Finite-dimensional characters}

Let $\mathcal{H}$ be a Hilbert space with conjugate Hilbert space $\overline{\mathcal{H}}$. The Hilbertian tensor product $\mathcal{H} \otimes \overline{\mathcal{H}}$ is naturally isomorphic to the Hilbert space $\mathrm{HS}(\mathcal{H})$ of all Hilbert--Schmidt operators acting on $\mathcal{H}$.
The space $\mathrm{HS}(\mathcal{H})$ with its Hilbert--Schmidt norm admits a dense subspace $\mathrm{FR}(\mathcal{H})$ consisting of all finite rank operators. Note that the identity map $\mathrm{Id}$  belongs to $\mathrm{HS}(\mathcal{H})$ if and only if the Hilbert space $\mathcal{H}$ is finite-dimensional.

Let $\pi$ be a unitary representation acting on the Hilbert space $\mathcal{H}$. Then the unitary representation $\pi \otimes \pi^*$ acts on the tensor product Hilbert space $\mathcal{H} \otimes \overline{\mathcal{H}}$, and this action is equivalent to the conjugation action on   $\mathrm{HS}(\mathcal{H})$.
An invariant vector $T \in \mathrm{HS}(\mathcal{H})$ for the unitary representation $\pi \otimes \pi^*$ is an operator satisfying $\pi(\gamma) T = T \pi(\gamma)$ for all elements $\gamma \in \Gamma$. In other words, such an invariant vector $T$ is  an intertwining operator for the representation $\pi$.

A trace $\varphi \in \Tr{\Gamma}$ is said to be \emph{finite-dimensional} if its GNS unitary representation $\pi_\varphi$ is finite-dimensional.

\begin{lemma}
\label{lemma:condition for invariant vectors for tensor}
Let $\varphi \in \Tr{\Gamma}$ be a trace. The unitary representation $\pi_\varphi \otimes \pi_\varphi^*$ has invariant vectors if and only if  $\psi \le \varphi$ for some finite-dimensional character $\psi \in \Ch{\Gamma}$.
\end{lemma}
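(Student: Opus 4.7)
As noted in the paragraph preceding the lemma, the conjugation identification $\mathcal{H}_\varphi \otimes \overline{\mathcal{H}_\varphi} \cong \mathrm{HS}(\mathcal{H}_\varphi)$ turns invariant vectors of $\pi_\varphi \otimes \pi_\varphi^*$ into precisely those Hilbert--Schmidt operators lying in the commutant $M_\varphi' = \pi_\varphi(\Gamma)'$. The plan is therefore to prove that $M_\varphi' \cap \mathrm{HS}(\mathcal{H}_\varphi) \ne \{0\}$ if and only if $\varphi$ dominates some finite-dimensional character, handling the two implications separately.

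For the ``if'' direction, suppose $\psi \le \varphi$ is a finite-dimensional character. Equation \eqref{eq:varphi_T} produces some $T \in \mathrm{Z}(M_\varphi)$ with $0 < T \le 1$ such that $\psi = \varphi_T$. Centrality of $T$ ensures that the subspace $V := \overline{\pi_\varphi(\Gamma) T^{1/2} v_\varphi}$ is $\pi_\varphi$-invariant and realizes the GNS construction for $\varphi_T$; by uniqueness of the GNS data, $\pi_\varphi|_V$ is unitarily equivalent to $\pi_\psi$ and is hence finite-dimensional. The orthogonal projection onto $V$ is then a non-zero finite-rank operator commuting with $\pi_\varphi(\Gamma)$, i.e. a non-zero Hilbert--Schmidt invariant vector.

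For the ``only if'' direction, start from a non-zero $T \in M_\varphi' \cap \mathrm{HS}(\mathcal{H}_\varphi)$. Spectral theory applied to the non-zero positive trace-class operator $T^*T \in M_\varphi'$ yields a non-zero finite-rank projection $P \in M_\varphi'$. Let $c(P) \in \mathrm{Z}(M_\varphi)$ denote its central support. Since $P$ commutes with every element of $M_\varphi$, the reduction map $c(P) M_\varphi \to \mathrm{B}(P\mathcal{H}_\varphi)$, $x \mapsto x|_{P\mathcal{H}_\varphi}$, is a well-defined $*$-homomorphism; the minimality of $c(P)$ forces it to be injective, so $c(P) M_\varphi$ is a finite-dimensional von Neumann algebra. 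Choosing a minimal central projection $Q \le c(P)$ in $\mathrm{Z}(M_\varphi)$ then gives a matrix factor $QM_\varphi \cong M_n(\mathbb{C})$. By \eqref{eq:varphi_T}, $\varphi_Q$ is a trace dominated by $\varphi$, and it is a character since $QM_\varphi$ is a factor. Finally, $Qv_\varphi$ is cyclic and separating for $QM_\varphi$ acting on $Q\mathcal{H}_\varphi$, which places us in the standard form of the type $\mathrm{I}_n$ factor $QM_\varphi$ and forces $\dim Q\mathcal{H}_\varphi = n^2 < \infty$; hence $\pi_{\varphi_Q} = \pi_\varphi|_{Q\mathcal{H}_\varphi}$ is finite-dimensional.

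The main obstacle is the very last step: while producing a finite-dimensional factor summand of $M_\varphi$ falls out cleanly from the central support calculation, the finite-dimensionality of the corresponding summand of the Hilbert space $\mathcal{H}_\varphi$ relies on the tracial nature of the cyclic vector $v_\varphi$ placing us in the standard form of a finite von Neumann algebra. Everything else is a routine translation between intertwiners, central projections, and dominated traces via the correspondence \eqref{eq:varphi_T}.
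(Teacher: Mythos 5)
Your ``if'' direction is essentially the same as the paper's: both build the $\pi_\varphi$-invariant finite-dimensional subspace generated by $T^{1/2}v_\varphi$ and take its orthogonal projection. Your ``only if'' direction, however, is a genuinely different route. The paper invokes the abstract fact (citing \cite[A.1.12]{bekka2008kazhdan}) that a representation $\pi$ with $1 \le \pi\otimes\pi^*$ admits a finite-dimensional subrepresentation, then takes the isotypic projection $\mathrm{P} \in \mathrm{Z}(M_\varphi)$ and sets $\psi = \varphi_{\mathrm P}$, noting that this $\psi$ is a finite-dimensional trace (strictly speaking the paper leaves it as a trace rather than a character, though that gap is easy to close). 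You instead work directly with the Hilbert--Schmidt intertwiner: spectral theory on the trace-class operator $T^*T$ gives a finite-rank projection in $M_\varphi'$, the central-support argument cuts out a finite-dimensional corner of $M_\varphi$, a minimal central projection $Q$ therein gives a factor summand $QM_\varphi \cong M_n(\mathbb{C})$, and the standard form (trace vector $Qv_\varphi$ cyclic and separating) forces $\dim Q\mathcal{H}_\varphi = n^2$. What the paper's route buys is brevity by outsourcing the key step to a reference; what yours buys is a self-contained argument that, as a bonus, directly produces a \emph{character} (since $QM_\varphi$ is a factor by construction) rather than merely a finite-dimensional trace. All your intermediate steps check out, including the slightly compressed ``minimality of $c(P)$ forces injectivity'' claim, which unwinds to the standard fact that $\ker(x \mapsto xP)$ on $M_\varphi$ is the ideal $(1-c(P))M_\varphi$.
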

\begin{proof}
Assume that there is a finite-dimensional character $\psi \in \Ch{\Gamma}$  given by  $\psi = \varphi_T$ for some non-zero operator $T \in \mathrm{Z}(M_\varphi)$ satisfying $0 \le T \le 1$. 
Consider the closed subspace $\mathcal{L}$ of the Hilbert space $\mathcal{H}_\varphi$ given by
\begin{equation}
\mathcal{L} = \overline{ \mathrm{span}_\mathbb{C} \{ \pi_\varphi(\gamma) v_\psi \: :\: \gamma \in \Gamma \}  }  
\end{equation}
where 
 $v_\psi = T^{\frac{1}{2}} v_\varphi$. The GNS data associated to the character $\psi$ is uniquely determined up to an isomorphism. This implies that the subspace $\mathcal{L}$ is isomorphic to the Hilbert space $\mathcal{H}_\psi$. As such, the closed subspace $\mathcal{L}$ is finite-dimensional. Therefore the orthogonal projection operator $\mathrm{P}$ to the subspace $\mathcal{L}$ belongs  to $\mathrm{HS}(\mathcal{H}_\varphi)$. Moreover   $\mathrm{P}$ is an invariant vector for the unitary representation $\pi_\varphi \otimes \pi_\varphi^*$ as  the  subspace $\mathcal{L}$ is $ \pi_\varphi(\Gamma)$-invariant.

Conversely, suppose that the unitary representation $\pi_\varphi \otimes \pi^*_\varphi$ admits invariant vectors. It follows that the unitary representation $\pi_\varphi$ admits a finite-dimensional sub-representation $\pi_0$  \cite[A.1.12]{bekka2008kazhdan}.
Let $\mathrm{P}$ be the orthogonal projection to the sum of all the sub-representations of $\pi_\varphi$ isomorphic to $\pi_0$. Then $\mathrm{P} \in \pi_\varphi(\Gamma)'$. In addition $\mathrm{P} \in \pi_\varphi(\Gamma)''$ as the von Neumann algebra generated by a direct sum of disjoint representations is the direct sum of the corresponding von Neumann algebras \cite[Proposition 6.B.4]{bekka2020unitary}.
It follows that  $\mathrm{P} \in \mathrm{Z}(M_\varphi)$. Denote $\psi = \varphi_\mathrm{P}$ so that $\psi \le \varphi$. Note that the trace $\psi$ is finite-dimensional.
\end{proof}
Restricting  once again to the case where $\varphi$ is a character, we get the following  useful characterization.
\begin{cor}
Let $\varphi \in \Ch{\Gamma}$ be a character. Then $\varphi$ is finite-dimensional if and only if the representation $\pi_\varphi \otimes \pi^*_\varphi$ has invariant vectors.
\end{cor}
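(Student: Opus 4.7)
The corollary is a clean specialization of Lemma \ref{lemma:condition for invariant vectors for tensor} combined with the extremality of $\varphi$, so the plan is short.

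For the forward implication, suppose $\varphi$ is finite-dimensional, so $\mathcal{H}_\varphi$ is finite-dimensional. Then the identity operator lies in $\mathrm{HS}(\mathcal{H}_\varphi) \cong \mathcal{H}_\varphi \otimes \overline{\mathcal{H}_\varphi}$ and is manifestly fixed by the conjugation action of $\pi_\varphi \otimes \pi_\varphi^*$, providing a non-zero invariant vector. Alternatively, one may simply apply Lemma \ref{lemma:condition for invariant vectors for tensor} to the domination $\varphi \le \varphi$ (take $\alpha = 1$, so that $\varphi - \alpha \varphi \equiv 0$ is trivially positive definite), using $\varphi$ itself as the required finite-dimensional character.

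For the reverse implication, assume $\pi_\varphi \otimes \pi_\varphi^*$ admits a non-zero invariant vector. By Lemma \ref{lemma:condition for invariant vectors for tensor} there exists a finite-dimensional character $\psi \in \Ch{\Gamma}$ with $\psi \le \varphi$. The plan is then to exploit extremality of $\varphi$ to force $\psi = \varphi$, which yields that $\varphi$ itself is finite-dimensional. Concretely, $\psi \le \varphi$ means $\varphi - \alpha \psi$ is positive definite for some $0 < \alpha \le 1$. If $\alpha < 1$, set $\chi = (1-\alpha)^{-1}(\varphi - \alpha \psi)$; then $\chi \in \Tr{\Gamma}$ and $\varphi = \alpha \psi + (1-\alpha) \chi$, so extremality of $\varphi$ in $\Tr{\Gamma}$ gives $\psi = \varphi$. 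In the edge case $\alpha = 1$, the function $\varphi - \psi$ is positive definite and vanishes at $e_\Gamma$, hence is identically zero by the standard bound $|f(\gamma)| \le f(e_\Gamma)$ for positive definite functions, so again $\psi = \varphi$.

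There is no real obstacle here; the only subtlety is remembering to handle the $\alpha = 1$ case, which is where the argument for extremality would otherwise break down. Everything else is a direct invocation of the preceding lemma.
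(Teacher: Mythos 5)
Your proof is correct and follows the same route the paper takes implicitly: specialize Lemma~\ref{lemma:condition for invariant vectors for tensor} to a character and invoke the fact (already recorded in the paper just before Corollary~\ref{cor:condition for a character to be trivial}) that a character dominates only itself. The explicit handling of the $\alpha=1$ edge case is a careful touch but is not a genuinely different argument.
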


\subsection*{Amenable traces}
A trace $\varphi \in \Tr{\Gamma}$ is called \emph{amenable} if the corresponding unitary representation $\pi_\varphi$ is amenable in the sense of Bekka \cite{bekka1990amenable}, namely the tensor product representation $\pi_\varphi \otimes \pi^*_\varphi$ admits almost-invariant vectors.  This is equivalent to saying that the norm of the positive contracting  operator $(\pi_\varphi\otimes \pi_\varphi^*)(a)$ is $1$ for some choice of  an element $a \in \ell^1(\Gamma)$ such that the support of the element $a^*a$ generates the group $\Gamma$ \cite[G.4.2]{bekka2008kazhdan}. A non-amenable trace cannot dominate any finite-dimensional trace, see Lemma \ref{lemma:condition for invariant vectors for tensor}. In particular, a non-amenable character is  infinite-dimensional.

\section{The conjugation representation}
\label{sec:conjugation representation}

In this section, we turn our attention to the conjugation representation associated to a trace. This representation will be helpful in determining whether the trace in question is in fact a character. 

Let $\Gamma$ be a countable discrete group and let $\varphi \in \Tr{\Gamma}$ be a trace on $\Gamma$.
Consider the unitary representation $c_\varphi:\Gamma\to \mathrm{U}(\mathcal{H}_\varphi)$ given by 
\begin{equation}
c_\varphi(\gamma)=\pi_\varphi(\gamma)\rho_\varphi(\gamma)=\rho_\varphi(\gamma)\pi_\varphi(\gamma) \quad
\forall \gamma \in \Gamma.
\end{equation}
Note that the representation $c_\varphi$ is the restriction of the representation 
\begin{equation}
    \pi_\varphi\times \rho_\varphi:\Gamma\times \Gamma\to \mathrm{U}(\mathcal{H}_\varphi)
\end{equation}
to the diagonal subgroup $\Gamma\cong \Delta \Gamma \leq \Gamma\times \Gamma$.

Let $M_\varphi$ be the von Neumann algebra associated to the trace $\varphi$. Consider the linear map  
\begin{equation}\label{eq:iota}
    \iota_\varphi : M_\varphi\to\mathcal{H}_\varphi,\quad  \iota :  x\mapsto xv_\varphi \quad \forall x \in M_\varphi.
\end{equation} 
The fact that the vector $v_\varphi$ is cyclic for the representation $\pi_\varphi$ implies that the image of $\iota_\varphi$ is dense. Likewise, the fact that the vector $v_\varphi$ is cyclic for the representation $\rho_\varphi$ combined with the fact that $\rho_\varphi(\Gamma) \subset  M'_\varphi$  implies that $\iota_\varphi$ is injective. The map
$\iota_\varphi$ intertwines the adjoint representation of the group $\Gamma$ on the algebra $M_\varphi$ with the representation $c_\varphi$ on the Hilbert space  $\mathcal{H}_\varphi$, namely 
\begin{align}
\begin{split}
\iota_\varphi(\mathrm{Ad}_\gamma(x)) &=  \mathrm{Ad}_\gamma(x)v_\varphi = \left(\pi_\varphi(\gamma) x \pi_\varphi(\gamma^{-1}) \right) v_\varphi= 
  \pi_\varphi(\gamma) x \rho_\varphi(\gamma) v_\varphi = \\ &= \pi_\varphi(\gamma) \rho_\varphi(\gamma) x v_\varphi = c_\varphi(\gamma) x v_\varphi = c_\varphi(\gamma) \iota_\varphi(x)
\end{split}
\end{align}
for all group elements $\gamma \in \Gamma$ and all $x \in M_\varphi$.
If follows that the image of  $\iota_\varphi$ is a $c_\varphi$-invariant subspace of  $\mathcal{H}_\varphi$ and that the center  $\mathrm{Z}(M_\varphi)$ is mapped injectively into the subspace of $c_\varphi$-invariant vectors $\mathcal{H}_\varphi^{c_\varphi}$.

The main goal of the current section is the following result providing information about the image of 
the linear map $\iota_\varphi$.

\begin{prop}
\label{prop:embedding with dense image of H^c into Z(M)}
The embedding   $\iota_\varphi:\mathrm{Z}(M_\varphi)  \hookrightarrow  \mathcal{H}_\varphi^{c_\varphi}$ has dense image.
\end{prop}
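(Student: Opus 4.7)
The plan is to show that for every $x \in M_\varphi$ the orthogonal projection of $\iota_\varphi(x) = xv_\varphi$ onto the invariant subspace $\mathcal{H}_\varphi^{c_\varphi}$ already belongs to $\iota_\varphi(Z(M_\varphi))$. Since $\iota_\varphi(M_\varphi)$ is dense in $\mathcal{H}_\varphi$ and the orthogonal projection is norm-continuous, this immediately yields density of $\iota_\varphi(Z(M_\varphi))$ in $\mathcal{H}_\varphi^{c_\varphi}$ after applying the projection to a sequence $\iota_\varphi(x_n)\to\xi$ approximating an arbitrary $\xi\in\mathcal{H}_\varphi^{c_\varphi}$.

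The natural candidate for this projection is $\iota_\varphi(E_\varphi(x))$, where $E_\varphi\colon M_\varphi \to Z(M_\varphi)$ denotes the center-valued trace. The point is that because $\varphi$ is a trace, the vector state $\tau_\varphi(\cdot) = \langle \cdot\, v_\varphi, v_\varphi\rangle$ is a faithful normal tracial state on $M_\varphi$, so $(M_\varphi,\tau_\varphi)$ is a finite von Neumann algebra and such a normal conditional expectation $E_\varphi$ exists. To identify $\iota_\varphi(E_\varphi(x))$ with the projection, it suffices to verify $\langle \iota_\varphi(x),\xi\rangle = \langle \iota_\varphi(E_\varphi(x)),\xi\rangle$ for every $\xi \in \mathcal{H}_\varphi^{c_\varphi}$.

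To prove this identity, consider the normal linear functional $\phi_\xi \colon M_\varphi \to \mathbb{C}$ given by $\phi_\xi(x) = \langle \iota_\varphi(x), \xi\rangle$. The $c_\varphi$-invariance of $\xi$, combined with the intertwining relation $c_\varphi(\gamma)\iota_\varphi(x) = \iota_\varphi(\mathrm{Ad}_\gamma(x))$ established in the preamble, gives $\phi_\xi(\mathrm{Ad}_\gamma(x)) = \phi_\xi(x)$ for every $\gamma \in \Gamma$. A short manipulation (substitute $x = y\pi_\varphi(\gamma)$) rewrites this as $\phi_\xi(\pi_\varphi(\gamma) y) = \phi_\xi(y \pi_\varphi(\gamma))$ for all $y \in M_\varphi$, and extending linearly in $\gamma$ gives $\phi_\xi(ay) = \phi_\xi(ya)$ for every $a \in \pi_\varphi(\mathbb{C}\Gamma)$. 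For fixed $y$ the map $a \mapsto \phi_\xi(ay) - \phi_\xi(ya)$ is $\sigma$-weakly continuous on $M_\varphi$; since it vanishes on the $\sigma$-weakly dense subalgebra $\pi_\varphi(\mathbb{C}\Gamma)$, it vanishes identically. Thus $\phi_\xi$ is a normal trace on $M_\varphi$. Dixmier's averaging theorem expresses $E_\varphi(x)$ as a norm limit of convex combinations $\sum_i \lambda_i u_i x u_i^*$ with $u_i$ unitary in $M_\varphi$; applying the norm-continuous tracial functional $\phi_\xi$ to this approximation yields $\phi_\xi(E_\varphi(x)) = \phi_\xi(x)$, which is exactly the identity we need.

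The main technical point is the passage from the $\mathrm{Ad}$-invariance of $\phi_\xi$ under the subgroup $\pi_\varphi(\Gamma)$ to a full tracial identity on $M_\varphi$, which is what permits the use of Dixmier averaging with arbitrary unitaries of $M_\varphi$. Once this normality-based extension is in place, the remaining steps are formal.
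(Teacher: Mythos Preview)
Your proof is correct and takes a genuinely different route from the paper's.

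The paper proceeds geometrically: it proves an abstract Hilbert-space lemma stating that if $C\subset V$ is a closed convex balanced set whose dilates $\bigcup_n nC$ are dense, and $W\le V$ is a closed subspace preserved by the nearest-point projection onto $C$, then $(\bigcup_n nC)\cap W$ is dense in $W$. This is applied with $V=\mathcal{H}_\varphi$, $C=\iota_\varphi((M_\varphi)_1)$ (the image of the operator-norm unit ball, which is $L^2$-closed), and $W=\mathcal{H}_\varphi^{c_\varphi}$; equivariance of the nearest-point projection under the isometric action $c_\varphi$ supplies the hypothesis $\Pi(W)\subset W$. One then uses that $\iota_\varphi(M_\varphi)\cap\mathcal{H}_\varphi^{c_\varphi}=\iota_\varphi(Z(M_\varphi))$ since $v_\varphi$ is separating.

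Your approach is operator-algebraic: you invoke the center-valued trace $E_\varphi$ (available because $M_\varphi$ is finite) and show, via a traciality argument and Dixmier averaging, that the orthogonal projection of $\iota_\varphi(x)$ onto $\mathcal{H}_\varphi^{c_\varphi}$ is exactly $\iota_\varphi(E_\varphi(x))$. This is strictly more informative than the paper's argument, which only yields density without identifying the projection. The cost is reliance on the standard but nontrivial machinery of center-valued traces and Dixmier's theorem, whereas the paper's lemma is elementary Hilbert-space geometry (though it in turn requires the $L^2$-closedness of the unit ball). Both arguments adapt to the relative setting of \S\ref{sec:rel-traces}: the paper reuses its lemma verbatim with the representation $\alpha_\varphi$ in place of $c_\varphi$, and your argument would go through by replacing $\mathrm{Ad}_\gamma$ with $\alpha_\varphi(\lambda)$.

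One minor remark: you call $\phi_\xi$ a ``normal trace,'' but it need not be positive; what you use (and prove) is only that it is a normal \emph{tracial} linear functional, which is all that is needed for the Dixmier step.
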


The proof of Proposition \ref{prop:embedding with dense image of H^c into Z(M)} will be given below. As was already mentioned, the trace $\varphi$ is a character if and only if the center $\mathrm{Z}(M_\varphi)$ of the von Neumann algebra $M_\varphi$ is one-dimensional. Therefore Proposition \ref{prop:embedding with dense image of H^c into Z(M)} immediately implies the following  fact. While this is known to experts,   we are not aware of any  straightforward reference.

\begin{cor}
\label{cor:c-invariant vectors are one-dimensional iff its a character}
The trace $\varphi$ is a character if and only if $\dim _\mathbb{C} \mathcal{H}_\varphi^{c_\varphi} = 1$.
\end{cor}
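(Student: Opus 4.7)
The plan is to derive the corollary directly from Proposition \ref{prop:embedding with dense image of H^c into Z(M)}, using the fact that the trace $\varphi$ is a character precisely when the von Neumann algebra $M_\varphi$ is a factor, i.e. $\mathrm{Z}(M_\varphi) = \mathbb{C}\cdot 1$. Since the map $\iota_\varphi$ is an injection with dense image, the strategy is to translate each side of the desired equivalence through $\iota_\varphi$.

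For the forward direction, I would assume $\varphi$ is a character. Then $\mathrm{Z}(M_\varphi) = \mathbb{C}\cdot 1$ is one-dimensional, so its image under $\iota_\varphi$ is the one-dimensional subspace $\mathbb{C}\cdot v_\varphi \subset \mathcal{H}_\varphi^{c_\varphi}$. A one-dimensional subspace is automatically closed, and by Proposition \ref{prop:embedding with dense image of H^c into Z(M)} this subspace is also dense in $\mathcal{H}_\varphi^{c_\varphi}$. Hence $\mathcal{H}_\varphi^{c_\varphi} = \mathbb{C}\cdot v_\varphi$, giving $\dim_\mathbb{C} \mathcal{H}_\varphi^{c_\varphi} = 1$ (note that $v_\varphi \ne 0$ since $\|v_\varphi\| = \varphi(e_\Gamma)^{1/2} = 1$).

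For the converse, I would assume $\dim_\mathbb{C} \mathcal{H}_\varphi^{c_\varphi} = 1$. Since $\iota_\varphi \colon \mathrm{Z}(M_\varphi) \hookrightarrow \mathcal{H}_\varphi^{c_\varphi}$ is injective (as recalled in the paragraph preceding Proposition \ref{prop:embedding with dense image of H^c into Z(M)}), we get the dimension bound $\dim_\mathbb{C} \mathrm{Z}(M_\varphi) \le 1$. On the other hand, $\mathrm{Z}(M_\varphi)$ always contains the identity operator, so it is at least one-dimensional. Thus $\mathrm{Z}(M_\varphi) = \mathbb{C}\cdot 1$, meaning $M_\varphi$ is a factor and $\varphi \in \Ch{\Gamma}$.

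I do not anticipate any genuine obstacle: the entire content of the corollary is packaged into Proposition \ref{prop:embedding with dense image of H^c into Z(M)}, and what remains is a two-line dimension count on either side of the injection with dense image. The only minor point to verify is that $v_\varphi \ne 0$, which is immediate from the normalization $\varphi(e_\Gamma) = 1$.
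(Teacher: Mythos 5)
Your proof is correct and follows exactly the route the paper intends: translate the factoriality criterion $\mathrm{Z}(M_\varphi) = \mathbb{C}\cdot 1$ across the injective map $\iota_\varphi$, using density for the forward direction and injectivity for the converse. The paper simply states that Proposition \ref{prop:embedding with dense image of H^c into Z(M)} "immediately implies" the corollary; your write-up unpacks that implication in the natural way without deviating from it.
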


The   condition $\dim _\mathbb{C} \mathcal{H}_\varphi^{c_\varphi} = 1$ is equivalent to saying that  $\mathcal{H}_\varphi^{c_\varphi} = \mathrm{span}_\mathbb{C}   v_\varphi $. We generalize this statement to the relative setting in \S\ref{sec:rel-traces}. 

\subsection*{Dense linear subspaces of Hilbert spaces}
The proof of Proposition \ref{prop:embedding with dense image of H^c into Z(M)} will rely on the following general lemma concerning Hilbert spaces.

\begin{lemma}
\label{lem: a lemma on vector spaces}
Let $V$ be a Hilbert space. Let $C$ be a closed   convex   balanced\footnote{Recall that a subset $C$ of a complex vector space is called \emph{balanced} if $aC \subset C$ for all scalars $a$ with $|a| \le 1$.}   subset of $V$ such that the subspace  $A = \bigcup_n nC$ is  dense   in  $V$. Denote by $\Pi$  the nearest point projection from $V$ to $C$.    Then $A \cap W$ is dense in $W$ for any closed subspace $ W \le V$ satisfying $\Pi(W) \subset W$.
\end{lemma}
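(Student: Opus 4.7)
\emph{Plan.} My plan is to show that the hypothesis $\Pi(W)\subset W$ already implies the stronger statement that the orthogonal projection $P_W : V \to W$ onto $W$ sends $C$ into itself. Once this is established, the density conclusion is formal: since $P_W$ is linear, $P_W(nC) = n P_W(C) \subset nC$ for every positive integer $n$, so $P_W(A) \subset A$; combined with $P_W(V) = W$, this gives $P_W(A) \subset A \cap W$. As $A$ is dense in $V$ and $P_W$ is a continuous surjection onto $W$, the image $P_W(A)$ is dense in $W$, and hence so is the larger set $A \cap W$.

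\emph{Key step.} To establish $P_W(C) \subset C$, I would fix an arbitrary $c \in C$ and set $w = P_W c \in W$. Let $y = \Pi(w) \in C$, which by hypothesis lies in $W$. The variational characterization of the nearest point projection to a closed convex subset of a Hilbert space yields
\[ \mathrm{Re}\,\langle w - y,\, c' - y \rangle \le 0 \qquad \text{for all } c' \in C.\]
Specializing to $c' = c$ and writing $c - y = (c - w) + (w - y)$, the first summand lies in $W^\perp$ while $w - y \in W$, so $\langle w - y,\, c - w \rangle = 0$. Therefore
\[ \mathrm{Re}\,\langle w - y,\, c - y \rangle = \| w - y \|^2 \le 0,\]
which forces $w = y$, and in particular $w \in C$.

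\emph{Main obstacle.} The only conceptual hurdle is noticing the above implication: the geometric-looking hypothesis that the (generally non-linear) map $\Pi$ stabilizes $W$ is strong enough to force the linear-algebraic statement that $P_W$ sends $C$ into itself. The balanced hypothesis on $C$ plays only an auxiliary role, ensuring that $0 \in C$ (so that the scaling $P_W(nC) \subset nC$ behaves well) and that $A$ is a linear subspace of $V$.
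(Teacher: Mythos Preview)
Your proof is correct and takes a genuinely different route from the paper's. The paper argues directly: it fixes $v \in W \cap (A \cap W)^\perp$, shows $\Pi(v) = 0$ using the variational inequality at $w = 0$, invokes strict convexity of the Hilbert norm to get $\|v\| < \|v - w\|$ for all nonzero $w \in C$, and then derives a contradiction by approximating $v$ with elements $u_n \in nC$ and showing $\|v - u_n/n\| < \|v\|$ for large $n$. Your approach is more structural: you observe that the hypothesis $\Pi(W) \subset W$ is in fact equivalent to the statement that the \emph{linear} orthogonal projection $P_W$ sends $C$ into itself (your key step proves one direction; the converse is an easy contraction argument). Once this is in hand, the density conclusion is a one-liner. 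Your argument is shorter, avoids the strict-convexity and approximation steps entirely, and in fact shows that the balanced hypothesis is not needed for the proof to go through (it is only used to ensure that $A$ is a linear subspace, which the argument does not actually require). The paper's approach, on the other hand, stays closer to the definition and does not introduce the auxiliary map $P_W$.

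One small quibble: in your closing remark you say the balanced hypothesis is needed so that ``the scaling $P_W(nC) \subset nC$ behaves well''; in fact that inclusion follows from linearity of $P_W$ alone and does not use $0 \in C$.
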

\begin{proof}
Let $W$ be any closed subspace of $V$ satisfying $\Pi(W) \subset W$. 
Consider any vector  $v \in W \cap (A \cap W)^\perp$. We will show that necessarily $v= 0$. 

We start by showing that $\Pi(v)=0$. Note that the nearest point projection $\Pi(v)$ satisfies
\begin{equation}
    \Pi(v) \in C \cap W \subset A \cap W.
\end{equation}
The properties of the nearest point projection $\Pi(v)$ imply that for any other  vector  $w\in C$     the angle at the point $\Pi(v)$ between the two line segments $\left[\Pi(v), v\right]$ and $\left[ \Pi(v),w\right]$  cannot be acute. In terms of the inner product this means that
\begin{equation}
\label{eq:acute angle}
\Re \left<v-\Pi(v), w-\Pi(v) \right> \le 0 \quad  \forall w\in C.
\end{equation}
Substituting  $w = 0$ and recalling that $v \in (A \cap W)^\perp$ while $\Pi(v) \in A \cap W$  gives $\Re \left<\Pi(v),\Pi(v)\right> \le 0$. Therefore  $\Pi(v) = 0$. As Hilbert spaces are strictly convex, we conclude that 
\begin{equation}
\label{eq:strict stuff}
    \|v\| = \| v - \Pi(v) \| < \|v-w\|
\end{equation}
for all   vectors $  w \in C$ with $w \neq \Pi(v) = 0$.

The linear subspace $A$ is dense in the Hilbert space $V$ by assumption. Therefore there is a sequence of vectors $u_n \in nC$ converging to $v$. Denote $x_n = \frac{1}{n}u_n  $ so that $x_n \in C$ for all $n \in \mathbb{N}$ and  $x_n \to 0$. 
As $u_n \to v$ we have that $\left<u_n,v\right> \to \left<v,v\right>$ as well as $\left<u_n,u_n\right> \to \left<v,v\right>$. Assume towards contradiction that $v \neq 0$. In that case  
\begin{equation}
\label{eq:inner product strictly larger}
\frac{2}{n} \Re \left<u_n,v\right> > \frac{1}{n^2} \left<u_n,u_n\right>   
\end{equation}
for all $n$ sufficiently large. 
On the other hand, for all $n$ we have that
\begin{equation}
\label{eq:inner product open}
\left<v-x_n,v-x_n\right> = \left<v,v\right> - \frac{2}{n}\Re \left<u_n,v\right> + \frac{1}{n^2} \left<u_n,u_n\right>.
\end{equation}
Putting together the two Equations (\ref{eq:inner product strictly larger}) and  (\ref{eq:inner product open})   implies that 
 $\|v\| > \|v - x_n\|  $ for all $n$ sufficiently large. 
It follows from the discussion involving Equation (\ref{eq:strict stuff})  that $x_n = u_n = 0$ for all $n$ sufficiently large. As $u_n \to v$ we arrive at a contradiction to the assumption $v \neq 0$ and deduce that $v = 0$  as required.
\end{proof}

\subsection*{Invariant vectors for the conjugation representation}

Let $(M_\varphi)_1$ denote the subset of the von Neumann algebra $M_\varphi\subset \mathrm{B}(\mathcal{H}_\varphi)$ consisting of all operators with operator norm at most $1$. Identify the von Neumann algebra $M_\varphi$ as a subspace of the Hilbert space $\mathcal{H}_\varphi$ via the map $\iota_\varphi$ introduced in Equation (\ref{eq:iota}). This identification makes $(M_\varphi)_1$   a closed subset of the Hilbert space $\mathcal{H}_\varphi$  by \cite[Corollary 5.2]{peterson2016lecture}.  Moreover the subset $(M_\varphi)_1$ is convex and $c_\varphi$-invariant. We are ready to complete the following proof.

\begin{proof}[Proof of Proposition \ref{prop:embedding with dense image of H^c into Z(M)}]
The desired statement follows immediately as a special case of  Lemma \ref{lem: a lemma on vector spaces}. We let $V$ be the Hilbert space $\mathcal{H}_\varphi$, $W$ be the Hilbert subspace of $c_\varphi$-invariant vectors and $C$ be the closed convex subset $(M_\varphi)_1$ of $\mathcal{H}_\varphi$.
In this situation  $\Pi$ denotes the nearest point projection from $\mathcal{H}_\varphi$ to the closed convex subset $(M_\varphi)_1$. As $c_\varphi$ acts by isometries, the subset $(M_\varphi)_1$ is $c_\varphi$-invariant and the projection $\Pi$ is $c_\varphi$-equivariant. Therefore $\Pi$ preserves the subspace $W$  of all $c_\varphi$-invariant vectors, as is required for the purpose of Lemma \ref{lem: a lemma on vector spaces}. 
\end{proof}

\section{Traces and spectral gap}\label{sec:Traces and spectral gap}

Let $\Gamma$ be a countable discrete group and $\varphi \in \Tr{\Gamma}$ be a fixed trace with associated GNS data $(\pi_\varphi,\rho_\varphi,\mathcal{H}_\varphi,v_\varphi)$. In addition,  consider the unitary conjugation  representation $c_\varphi : \Gamma \to \mathrm{U}(\mathcal{H}_\varphi)$ introduced in \S\ref{sec:conjugation representation}.
In this key section, we show that spectral gap for any of the representations $\pi_\varphi, \pi_\varphi \otimes \pi^*_\varphi$ and $c_\varphi$  can be expressed in terms of inequalities involving  elements of the group algebra $\mathbb{C}\left[\Gamma\right]$. The exact formulas will not be crucial. Rather, their  existence  demonstrates that spectral gap associated to traces is preserved when taking limits. This concept resonates with Kesten's theorem, though our analysis demands a more detailed approach.

Fix an arbitrary element  $a \in \ell^1(\Gamma)$. Typically the element $a$ will be chosen so that its support generates the group $\Gamma$, but this is not necessary at this point. Set $b = a^* a \in \ell^1(\Gamma)$ and denote
\begin{equation}
b=\sum_{\gamma\in \Gamma} b_\gamma \gamma
\end{equation}
with the coefficients $b_\gamma \in \mathbb{C}$ for each $\gamma \in \Gamma$.
We will use these notations throughout this section without further mention.

In what follows we will repeatedly make use of the formula
\begin{equation}
\label{eq:norm in terms of trace}
\|\pi_\varphi(x) v_\varphi\|^2 = \left<\pi_\varphi(x) v_\varphi,\pi_\varphi(x) v_\varphi\right> = \left<\pi_\varphi(x^* x) v_\varphi, v_\varphi\right> = \varphi(x^* x)
\end{equation}
holding true for all elements $x \in \ell^1(\Gamma)$.

\begin{lemma}
\label{lem:bound on norm of pi}
Fix $\beta\geq 0$. Then
$\|\pi_\varphi(a)\| \le \beta^\frac{1}{2}$  if and only if
\begin{equation}
\label{eq:formula for spectral gap}
\varphi(x^* (b-\beta) x) \le 0
\end{equation} for every element $ x \in \mathbb{C}\left[\Gamma\right]$.
\end{lemma}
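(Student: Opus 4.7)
The strategy is entirely operator-theoretic and turns on three ingredients: the $\mathrm{C}^*$-identity $\|T\|^2 = \|T^*T\|$, the usual characterization of the norm of a positive operator via its quadratic form, and the cyclicity of $v_\varphi$ for $\pi_\varphi$.

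First I would translate the norm bound into an operator inequality. Since $b = a^*a$ and $\pi_\varphi$ is a $*$-representation, we have $\pi_\varphi(b) = \pi_\varphi(a)^* \pi_\varphi(a)$, so
\begin{equation}
\|\pi_\varphi(a)\|^2 = \|\pi_\varphi(a)^*\pi_\varphi(a)\| = \|\pi_\varphi(b)\|.
\end{equation}
The operator $\pi_\varphi(b)$ is positive and self-adjoint, so $\|\pi_\varphi(b)\| \le \beta$ if and only if $\pi_\varphi(b) \le \beta \cdot \mathrm{Id}$ as operators on $\mathcal{H}_\varphi$. Extending $\pi_\varphi$ linearly to $\ell^1(\Gamma)$ and interpreting the scalar $\beta$ as $\beta e_\Gamma \in \ell^1(\Gamma)$, this operator inequality reads $\pi_\varphi(b - \beta) \le 0$, i.e.\
\begin{equation}
\label{eq:plan-quadratic}
\langle \pi_\varphi(b-\beta)\xi, \xi\rangle \le 0 \qquad \forall \xi \in \mathcal{H}_\varphi.
\end{equation}

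Second, I would use cyclicity to reduce to a dense subspace. Since $v_\varphi$ is cyclic for $\pi_\varphi$, the vectors $\pi_\varphi(x)v_\varphi$ with $x \in \mathbb{C}[\Gamma]$ are dense in $\mathcal{H}_\varphi$. The self-adjoint operator $\pi_\varphi(b-\beta)$ is bounded, so the quadratic form $\xi \mapsto \langle \pi_\varphi(b-\beta)\xi,\xi\rangle$ is continuous. Thus \eqref{eq:plan-quadratic} holds for all $\xi \in \mathcal{H}_\varphi$ if and only if it holds for all $\xi$ of the form $\pi_\varphi(x) v_\varphi$ with $x \in \mathbb{C}[\Gamma]$.

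Finally, for such $\xi$ a direct computation in the same spirit as Equation \eqref{eq:norm in terms of trace} yields
\begin{equation}
\langle \pi_\varphi(b-\beta) \pi_\varphi(x) v_\varphi, \pi_\varphi(x) v_\varphi\rangle
= \langle \pi_\varphi\bigl(x^*(b-\beta)x\bigr) v_\varphi, v_\varphi\rangle
= \varphi\bigl(x^*(b-\beta)x\bigr),
\end{equation}
which gives the stated equivalence. There is no real obstacle here: the only small technical point is ensuring that $x^*(b-\beta)x$ makes sense as an element of $\ell^1(\Gamma)$ to which $\varphi$ has been extended, which is automatic since $\mathbb{C}[\Gamma] \cdot \ell^1(\Gamma) \cdot \mathbb{C}[\Gamma] \subset \ell^1(\Gamma)$.
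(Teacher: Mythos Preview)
Your proof is correct and follows essentially the same approach as the paper: both reduce the norm bound to an inequality on the dense set of vectors $\pi_\varphi(x)v_\varphi$ via cyclicity, then compute using $\langle \pi_\varphi(y)v_\varphi, v_\varphi\rangle = \varphi(y)$. The only cosmetic difference is that you phrase the first step through the $\mathrm{C}^*$-identity and the operator inequality $\pi_\varphi(b)\le\beta\,\mathrm{Id}$, whereas the paper writes the equivalent vector-norm inequality $\|\pi_\varphi(a)\pi_\varphi(x)v_\varphi\|\le\beta^{1/2}\|\pi_\varphi(x)v_\varphi\|$ directly.
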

\begin{proof}
Since the collection of vectors $\pi_\varphi(x) v_\varphi$ with $x \in \mathbb{C}\left[\Gamma\right]$ is dense in the Hilbert space  $\mathcal{H}_\varphi$, it suffices to estimate the norm of the operator $\pi_\varphi(a)$ on this subset. In particular, the estimate $\|\pi_\varphi(a)\| \le \beta^\frac{1}{2}$ is equivalent to the inequality
\begin{equation}
\label{eq:definition of spectral gap}
\|\pi_\varphi(a) \pi_\varphi(x) v_\varphi\| \le \beta^\frac{1}{2} \|\pi_\varphi(x) v_\varphi\|
\end{equation}
holding true for every $x \in \mathbb{C}\left[\Gamma\right]$. The formula in Equation (\ref{eq:norm in terms of trace}) shows that  Equation (\ref{eq:definition of spectral gap}) is equivalent for each individual element $x \in \mathbb{C}\left[\Gamma\right]$ to the inequality \begin{equation}
\label{eq:pre rearrange}
\varphi(x^* b x) = \varphi((ax)^*(ax)) \le \beta \varphi(x^* x).
\end{equation}
The desired formula in Equation (\ref{eq:formula for spectral gap}) follows by rearranging Equation (\ref{eq:pre rearrange}).
\end{proof}
The next estimate  is concerned with the conjugation representation $c_\varphi$.

\begin{lemma}
\label{lem:norm estimate for conjugation representation}
Fix   $\beta\geq 0$. Then
\begin{equation}\|c_\varphi(a)_{|\mathrm{span}_\mathbb{C}(v_\varphi)^\perp}\| \le \beta^{\frac{1}{2}}
\end{equation}
if and only if the inequality 
\begin{equation}
\label{eq:formula for conjugation action}
 \varphi\left(\sum_{\gamma}b_{\gamma}\gamma^*x^*\gamma x-\beta x^* x    \right) \leq  (1-\beta) |\varphi(x)|^2
\end{equation}
holds true for every element $x \in \mathbb{C}\left[\Gamma\right]$
\end{lemma}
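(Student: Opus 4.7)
I will mirror the approach of Lemma \ref{lem:bound on norm of pi}, reducing the operator-norm bound to an inequality indexed by $x \in \mathbb{C}[\Gamma]$; the twist here is that one must first pass to the positive operator $c_\varphi(b) = c_\varphi(a)^* c_\varphi(a)$ and then test it on a suitable dense subspace of $\mathrm{span}_\mathbb{C}(v_\varphi)^\perp$.

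\textbf{Setup.} The first observation is that the cyclic vector $v_\varphi$ is $c_\varphi$-invariant: since $\pi_\varphi(\gamma) v_\varphi = \rho_\varphi(\gamma^{-1}) v_\varphi$, one has
\begin{equation*}
c_\varphi(\gamma) v_\varphi = \pi_\varphi(\gamma) \rho_\varphi(\gamma) v_\varphi = \pi_\varphi(\gamma) \pi_\varphi(\gamma^{-1}) v_\varphi = v_\varphi.
\end{equation*}
Consequently every $c_\varphi(\gamma)$, together with its adjoint $c_\varphi(\gamma^{-1})$, preserves $\mathrm{span}_\mathbb{C}(v_\varphi)^\perp$, and hence so do $c_\varphi(a)$ and $c_\varphi(b)$. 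Moreover the vectors $\xi_x := \pi_\varphi(x) v_\varphi - \varphi(x) v_\varphi$ for $x \in \mathbb{C}[\Gamma]$ form a dense linear subspace of $v_\varphi^\perp$: indeed $\xi_x$ is precisely the orthogonal projection of $\pi_\varphi(x) v_\varphi$ onto $v_\varphi^\perp$, and the set $\{\pi_\varphi(x) v_\varphi : x \in \mathbb{C}[\Gamma]\}$ is dense in $\mathcal{H}_\varphi$ by cyclicity of $v_\varphi$ for $\pi_\varphi$.

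\textbf{Reduction and computation.} Since $c_\varphi(b)$ is a positive operator, the bound $\|c_\varphi(a)_{|\mathrm{span}_\mathbb{C}(v_\varphi)^\perp}\| \le \beta^{1/2}$ is equivalent to the quadratic inequality $\langle c_\varphi(b) \xi_x, \xi_x \rangle \le \beta \|\xi_x\|^2$ for every $x \in \mathbb{C}[\Gamma]$. The right-hand side is easily computed via \eqref{eq:norm in terms of trace} to be $\beta(\varphi(x^*x) - |\varphi(x)|^2)$. For the left-hand side I first establish the key identity
\begin{equation*}
c_\varphi(\gamma) \pi_\varphi(x) v_\varphi = \pi_\varphi(\gamma x \gamma^{-1}) v_\varphi \quad \text{for all } \gamma \in \Gamma, \ x \in \mathbb{C}[\Gamma],
\end{equation*}
which follows by combining the commutation of $\pi_\varphi$ and $\rho_\varphi$ with $\rho_\varphi(\gamma) v_\varphi = \pi_\varphi(\gamma^{-1}) v_\varphi$. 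Expanding $\langle c_\varphi(\gamma) \xi_x, \xi_x \rangle$ into four inner-product terms and collapsing using conjugation invariance of $\varphi$ yields, after one further application of the same invariance, the expression $\varphi(\gamma^* x^* \gamma x) - |\varphi(x)|^2$. Taking the $b_\gamma$-weighted sum and rearranging produces the claimed inequality.

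\textbf{Main obstacle.} The calculation is purely algebraic, but one must deploy conjugation invariance of $\varphi$ at just the right moments: first to collapse redundant cross-terms in the expansion of $\langle c_\varphi(\gamma) \xi_x, \xi_x \rangle$, and second to reorganize the resulting expression as $\sum_\gamma b_\gamma \gamma^* x^* \gamma x$ rather than any cyclic variant of it. This careful bookkeeping is also precisely what is needed for the constant contributions to recombine into the right-hand side as stated.
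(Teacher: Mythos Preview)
Your argument is correct and amounts to the same computation as the paper's, organized slightly differently. The paper isolates the decomposition step into the auxiliary Lemma~\ref{lem:condition for operator norm bound on complement of vector}, which for an operator $A$ with $A$-invariant unit vector $v$ rewrites the bound $\|A_{|v^\perp}\|\le\beta^{1/2}$ as the inequality $\beta\|w\|^2-\|Aw\|^2\ge(\beta-1)|\langle w,v\rangle|^2$ tested on \emph{all} $w$; it then plugs in $w=\pi_\varphi(x)v_\varphi$ and computes the three terms separately. You instead project $\pi_\varphi(x)v_\varphi$ onto $v_\varphi^\perp$ by hand to obtain $\xi_x$ and test $\langle c_\varphi(b)\xi_x,\xi_x\rangle\le\beta\|\xi_x\|^2$ directly. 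The two routes are equivalent: your four-term expansion of $\langle c_\varphi(\gamma)\xi_x,\xi_x\rangle$ and the paper's three-term expression are related by one application of the Pythagorean identity, and both reduce to the same key identity $\langle c_\varphi(\gamma)\pi_\varphi(x)v_\varphi,\pi_\varphi(x)v_\varphi\rangle=\varphi(\gamma^*x^*\gamma x)$. Your approach is marginally more self-contained since it avoids the abstract lemma; the paper's has the advantage that the same lemma is reused verbatim in Lemma~\ref{lem:bound on norm of tensor - finite dimensional case}.

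One small remark: in your final ``rearranging'' step you are silently using $\sum_\gamma b_\gamma=1$ to turn $(\sum_\gamma b_\gamma-\beta)$ into $(1-\beta)$. The paper's proof uses the same normalization implicitly (Lemma~\ref{lem:condition for operator norm bound on complement of vector} requires $c_\varphi(a)v_\varphi=v_\varphi$, i.e.\ $\sum_\gamma a_\gamma=1$, whence $\sum_\gamma b_\gamma=1$), so this is not a defect of your argument relative to the paper's, but it would be worth making explicit.
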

\begin{proof}
The collection of vectors of the form $\pi_\varphi(x) v_\varphi$ for some element $x \in \mathbb{C}\left[\Gamma\right]$ forms a dense subset of the Hilbert space $\mathcal{H}_\varphi$. Therefore, to estimate the norm of the operator $c_\varphi(a)$ it suffices to restrict attention to this subset. The operator norm bound $\|c_\varphi(a)_{|\mathrm{span}_\mathbb{C}(v_\varphi)^\perp} \|\le \beta^{\frac{1}{2}}$ is equivalent to the inequality
\begin{equation}
\label{eq:three terms}
\beta \|w\|^2     -
\|c_\varphi(a) w \|^2 \ge (\beta - 1) |\left<w, v_\varphi\right>|^2
\end{equation}
for all vectors of the form  $w = \pi_\varphi(x) v_\varphi$ for some $x \in \mathbb{C}\left[\Gamma\right]$, see the auxiliary    Lemma \ref{lem:condition for operator norm bound on complement of vector}. Let us estimate each one of the three terms appearing in Equation (\ref{eq:three terms}). First
\begin{equation}
\|w\|^2 = \|\pi_\varphi(x) v_\varphi\|^2 = \varphi(x^* x)
\end{equation} 
according to Equation (\ref{eq:norm in terms of trace}). Next we   compute $\|c_\varphi(a)w\|^2$. Observe that  
\begin{align} \label{eq: inner product for conjugation rep}
\begin{split}
\left\langle c_{\varphi}(\gamma)w,w\right\rangle &=\left\langle c_{\varphi}(\gamma)\pi_{\varphi}(x)v_{\varphi},\pi_{\varphi}(x)v_{\varphi}\right\rangle = \left\langle \pi_{\varphi}(\gamma)\rho_{\varphi}(\gamma)\pi_{\varphi}(x)v_{\varphi},\pi_{\varphi}(x)v_{\varphi}\right\rangle = \\
&= \left\langle \pi_{\varphi}(x^{*})\pi_{\varphi}(\gamma)\pi_{\varphi}(x)\rho_{\varphi}(\gamma)v_{\varphi},v_{\varphi}\right\rangle
=\varphi\left(\gamma^{*}x^{*}\gamma x\right)
\end{split}
\end{align}
for all elements   $\gamma\in \Gamma$. 
We have used the formula $\rho_\varphi(z) v_\varphi = \pi_\varphi(z^*) v_\varphi$  as well as the trace property $\tau(z z' ) = \tau(z' z )$ which hold true   for all elements $z,z'\in M_\varphi$. Therefore we get
\begin{equation}\label{eq: inner product for conjugation rep2}
\|c_{\varphi}(a)w\|^{2}=\left\langle c_{\varphi}(b)w,w\right\rangle =\sum_{\gamma\in \Gamma} \left\langle b_{\gamma} c_{\varphi}(\gamma)w,w\right\rangle =\varphi\left(\sum_{\gamma\in \Gamma}b_{\gamma}\gamma^*x^*\gamma x\right).
\end{equation}
Lastly the inner product $\left<w,v_\varphi\right>$ is simply given by
\begin{equation} 
\left<w,v_\varphi \right> = \left<\pi_\varphi(x) v_\varphi, v_\varphi\right> = \varphi(x).
\end{equation}
The desired formula in Equation (\ref{eq:formula for conjugation action}) follows by combining these three computations.
\end{proof}

The following two results are  extensions of Lemma \ref{lem:bound on norm of pi} to the tensor product representation $\pi_\varphi \otimes \pi^*_\varphi$. We identify $\pi_\varphi^*$ with the unitary representation associated to the conjugate trace $\overline{\varphi} \in \Tr{\Gamma}$  admitting the GNS data $(\pi_\varphi^*,\rho_\varphi^*,\mathcal{H}^*_\varphi,v^*_\varphi)$.

\begin{lemma} 
\label{lem:tensor-product-formula-infinite dimensional case}
Assume that the Hilbert space $\mathcal{H}_\varphi$ is infinite-dimensional. Fix \mbox{$\beta\geq 0$}.  Then
\begin{equation}
\label{eq:tensor norm estimate}
\|(\pi_\varphi  \otimes \pi_\varphi^*)(a)\| \le \beta^\frac{1}{2}
\end{equation}
if and only if
\begin{equation}
\label{eq:formula for tensor product}
 \sum_{i,j=1}^{k} \left( \sum_{\gamma\in\Gamma}b_{\gamma}\varphi(x_{j}^{*}\gamma x_{i})\overline{\varphi}\left(y_{j}^{*}\gamma y_{i}\right) -\beta  \varphi(x_j^* x_i) \overline{\varphi}(y_j^* y_i)\right) \le 0
\end{equation}
for every $k \in \mathbb{N}$ and every $x_1,\ldots,x_k \in \mathbb{C}\left[\Gamma\right]$ and $y_1,\ldots,y_k \in \mathbb{C}\left[\Gamma\right]$. 
\end{lemma}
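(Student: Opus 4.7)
The plan is to mimic the strategy of Lemma 4.1, replacing the single GNS Hilbert space by the tensor product $\mathcal{H}_\varphi\otimes\mathcal{H}_\varphi^*$ and replacing the single dense family $\{\pi_\varphi(x)v_\varphi : x\in\mathbb{C}[\Gamma]\}$ by finite linear combinations of simple tensors of this form. Since the algebraic tensor product of two dense subspaces is dense in the Hilbertian tensor product, the vectors
\begin{equation*}
w \;=\; \sum_{i=1}^{k}\pi_\varphi(x_i)v_\varphi\otimes\pi_\varphi^*(y_i)v_\varphi^*,\qquad x_i,y_i\in\mathbb{C}[\Gamma],\ k\in\mathbb{N},
\end{equation*}
form a dense linear subspace of $\mathcal{H}_\varphi\otimes\mathcal{H}_\varphi^*$. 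Thus $\|(\pi_\varphi\otimes\pi_\varphi^*)(a)\|\le\beta^{1/2}$ is equivalent to the inequality
\begin{equation*}
\bigl\langle(\pi_\varphi\otimes\pi_\varphi^*)(b)w,w\bigr\rangle \;\le\; \beta\,\|w\|^2
\end{equation*}
holding for every such $w$, since $b=a^*a$ and the left-hand side equals $\|(\pi_\varphi\otimes\pi_\varphi^*)(a)w\|^2$.

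Next I would expand both sides as double sums indexed by the pairs $(i,j)$ and compute each matrix coefficient separately. For the right-hand side the computation in Equation~(3.3)-style gives
\begin{equation*}
\|w\|^2=\sum_{i,j=1}^{k}\bigl\langle\pi_\varphi(x_i)v_\varphi,\pi_\varphi(x_j)v_\varphi\bigr\rangle\bigl\langle\pi_\varphi^*(y_i)v_\varphi^*,\pi_\varphi^*(y_j)v_\varphi^*\bigr\rangle =\sum_{i,j=1}^{k}\varphi(x_j^*x_i)\,\overline{\varphi}(y_j^*y_i),
\end{equation*}
where I use the identities $\langle\pi_\varphi(z)v_\varphi,v_\varphi\rangle=\varphi(z)$ and the analogous formula for $\pi_\varphi^*$ with the conjugate trace $\overline{\varphi}$. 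For the left-hand side, write $b=\sum_\gamma b_\gamma\gamma$ and argue coordinate-wise on each $\gamma$:
\begin{equation*}
\bigl\langle(\pi_\varphi(\gamma)\otimes\pi_\varphi^*(\gamma))w,w\bigr\rangle=\sum_{i,j=1}^{k}\varphi(x_j^*\gamma x_i)\,\overline{\varphi}(y_j^*\gamma y_i),
\end{equation*}
which is the same manipulation used in the computations preceding Equation~(4.5). Summing against the coefficients $b_\gamma$ produces the left-hand side of Equation~(4.6) verbatim.

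Substituting these two evaluations into the operator-norm inequality yields the required equivalence: $\|(\pi_\varphi\otimes\pi_\varphi^*)(a)\|\le\beta^{1/2}$ if and only if the stated double-sum inequality holds for all $k$ and all choices of $x_i,y_i\in\mathbb{C}[\Gamma]$. The main point that requires a little care is the density of the algebraic tensor product and the justification for exchanging the $\gamma$-summation with the inner product, both of which are immediate since $b\in\ell^1(\Gamma)$ acts as a bounded operator and the sum converges in operator norm. The infinite-dimensionality hypothesis does not affect this computation directly; it is in place to guarantee that the tensor-product framework of \S2 concerning the existence of invariant vectors for $\pi_\varphi\otimes\pi_\varphi^*$ is not trivialized by the presence of the identity operator inside $\mathrm{HS}(\mathcal{H}_\varphi)$, and will be used in the applications that follow.
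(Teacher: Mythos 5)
Your proposal is correct and follows essentially the same route as the paper's proof: density of the span of simple tensors $\sum_i \pi_\varphi(x_i)v_\varphi\otimes\pi_\varphi^*(y_i)v_\varphi^*$, reduction of the operator-norm bound to a quadratic inequality on such vectors, and coordinate-wise evaluation of $\langle(\pi_\varphi\otimes\pi_\varphi^*)(\gamma)w,w\rangle$ summed against the $\ell^1$ coefficients $b_\gamma$. Your closing remark on why the infinite-dimensionality hypothesis is stated also matches the paper's intent, since the finite-dimensional case requires restricting to the orthogonal complement of $\mathrm{Id}\in\mathrm{HS}(\mathcal{H}_\varphi)$ and is handled in the subsequent lemma.
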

\begin{proof}
The collection of all the vectors of the form
\begin{equation}
\label{eq:the vector w}
w = w^k_{x_i,y_i} = \sum_{i=1}^k \pi_\varphi(x_i) v_\varphi \otimes \pi^*_\varphi(y_i) v^*_\varphi
\end{equation}
for some $k \in \mathbb{N}$ and some $x_i,y_i \in \mathbb{C}\left[\Gamma\right]$ is a dense subset of the Hilbert space $\mathcal{H}_\varphi \otimes \mathcal{H}_\varphi^* \cong \mathrm{HS}(\mathcal{H}_\varphi)$. Therefore, the operator norm bound  $\|(\pi_\varphi  \otimes \pi_\varphi^*)(a)\|^2 \le \beta$ is equivalent to the inequality 
\begin{equation}
\label{eq:so desired}
    \|(\pi_\varphi  \otimes \pi_\varphi^*)(a)w\|^2\leq \beta\|w\|^2 
\end{equation}
holding true for every vector $w=w^k_{x_i,y_i}$ as in Equation (\ref{eq:the vector w}) above.  
It is possible to rewrite the norm of the vector $w$ in terms of the trace $\varphi$, namely
\begin{align}
\label{eq:norm of w}
\begin{split}
\|w\|^2 =\langle w,w\rangle
&= \sum_{i,j=1}^k   \left<\pi_\varphi(x_i) v_\varphi \otimes \pi^*_\varphi(y_i) v^*_\varphi, \pi_\varphi(x_j) v_\varphi \otimes \pi^*_\varphi(y_j) v^*_\varphi \right>  \\
&= \sum_{i,j=1}^k \left<\pi_\varphi(x_i) v_\varphi, \pi_\varphi(x_j) v_\varphi \right> \cdot \left<  \pi^*_\varphi(y_i) v^*_\varphi,  \pi^*_\varphi(y_j) v^*_\varphi \right>  \\
&= \sum_{i,j=1}^k \varphi(x_j^* x_i) \overline{\varphi}(y_j^* y_i).
\end{split}
\end{align}
The next step is to estimate the norm   $\|(\pi_\varphi  \otimes \pi_\varphi^*)(a)w\|^2 $. First,  note that for each element $\gamma\in \Gamma$ individually we have
\begin{align}
\label{eq:norm of pi tensor}
\begin{split}
\left\langle \pi_{\varphi}\otimes\pi_{\varphi}^*(\gamma)w,w\right\rangle  
 &=\sum_{i,j=1}^{k}\left\langle \pi_{\varphi}(\gamma)\pi_{\varphi} (x_{i})v_{\varphi},\pi_\varphi (x_j) v_{\varphi}\right\rangle \cdot \left\langle \pi_\varphi^* (\gamma) \pi_{\varphi}^*\left( y_{i}\right)v_{\varphi}^{*},\pi_\varphi^*(y_{j}) v_{\varphi}^{*}\right\rangle  \\
&=\sum_{i,j=1}^{k}\left\langle \pi_{\varphi}(x_{j}^{*}\gamma x_{i})v_{\varphi},v_{\varphi}\right\rangle \cdot \left\langle \pi_{\varphi}^*\left(y_{j}^{*}\gamma y_{i}\right)v_{\varphi}^{*},v_{\varphi}^{*}\right\rangle \\
&= \sum_{i,j=1}^{k}\varphi(x_{j}^{*}\gamma x_{i})\bar{\varphi}\left(y_{j}^{*}\gamma y_{i}\right).
\end{split}
\end{align}
Using the fact that $b=a^* a$ and summing Equation (\ref{eq:norm of pi tensor}) over all $\gamma$'s we get
\begin{align}
\begin{split}
\label{eq:needs some rearranging}
\|\pi_{\varphi}\otimes\pi_{\varphi}^*(a)w\|^{2}=\left\langle \pi_{\varphi}\otimes\pi_{\varphi}^*(b)w,w\right\rangle &=\sum_{\gamma\in\Gamma}b_{\gamma}\left\langle \pi_{\varphi}\otimes\pi_{\varphi}^*(\gamma)w,w\right\rangle\\ &=\sum_{i,j=1}^{k}\sum_{\gamma\in\Gamma}b_{\gamma}\varphi(x_{j}^{*}\gamma x_{i})\overline{\varphi}\left(y_{j}^{*}\gamma y_{i}\right).
\end{split}
\end{align}
The desired Equation (\ref{eq:so desired}) follows from the two Equations (\ref{eq:norm of pi tensor}) and (\ref{eq:needs some rearranging}) by rearranging the terms.
\end{proof}

We now consider the situation when unitary representation $\pi_\varphi$ is finite-dimensional. In this case, we only obtain a one-way implication.

\begin{lemma}
\label{lem:bound on norm of tensor - finite dimensional case}
Assume that the Hilbert space $\mathcal{H}_\varphi$ is 
finite-dimensional so that \mbox{$\mathrm{Id} \in \mathrm{HS}(\mathcal{H}_\varphi)$}. Fix $\beta\geq 0$.  If
\begin{equation}
\label{eq:norm bound f.d case}
\|(\pi_\varphi \otimes \pi_\varphi^*)(a)_{|\mathrm{span}_\mathbb{C}(\mathrm{Id})^\perp}\| \le \beta^\frac{1}{2}
\end{equation}
then
\begin{equation}
\label{eq:formula for tensor product - f.d}
\sum_{i,j=1}^{k}  \sum_{\gamma\in\Gamma}b_{\gamma}\varphi(x_{j}^{*}\gamma x_{i})\overline{\varphi}\left(y_{j}^{*}\gamma y_{i}\right) -\beta  \varphi(x_j^* x_i) \overline{\varphi}(y_j^* y_i)
\leq (1-\beta) \frac{\left(\sum_{i=1}^k \|x_i\|_1 \|y_i\|_1\right)^2 }{(\dim_\mathbb{C} \mathcal{H}_\varphi)^3}
\end{equation}
for every $k \in \mathbb{N}$ and every $x_1,\ldots,x_k \in \mathbb{C}\left[\Gamma\right]$ and $y_1,\ldots,y_k \in \mathbb{C}\left[\Gamma\right]$. 
\end{lemma}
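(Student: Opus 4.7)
The plan is to adapt the argument of Lemma \ref{lem:tensor-product-formula-infinite dimensional case} to the finite-dimensional setting, where the identity operator $\mathrm{Id}$ on $\mathcal{H}_\varphi$ now lies in $\mathrm{HS}(\mathcal{H}_\varphi)$ and is fixed by the conjugation action of $\pi_\varphi\otimes\pi_\varphi^*$. The hypothesis only bounds the norm of this operator after modding out by $\mathrm{Id}$, so the resulting inequality will carry a correction term controlled by the $\mathrm{Id}$-component of the test vector, which is exactly what is absent in the infinite-dimensional case.

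For the first step I would reuse the test vectors $w = \sum_{i=1}^k \pi_\varphi(x_i)v_\varphi \otimes \pi_\varphi^*(y_i)v_\varphi^*$ from the proof of Lemma \ref{lem:tensor-product-formula-infinite dimensional case}; the formulas
\begin{equation*}
\|w\|^2 = \sum_{i,j}\varphi(x_j^*x_i)\overline{\varphi}(y_j^*y_i), \qquad \|(\pi_\varphi\otimes\pi_\varphi^*)(a)w\|^2 = \sum_{i,j}\sum_\gamma b_\gamma\varphi(x_j^*\gamma x_i)\overline{\varphi}(y_j^*\gamma y_i)
\end{equation*}
established there still hold, so that the left-hand side of the target inequality is identified with $\|(\pi_\varphi\otimes\pi_\varphi^*)(a)w\|^2 - \beta\|w\|^2$ on this dense family of vectors.

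For the second step I would invoke a version of the auxiliary operator-norm lemma used in the proof of Lemma \ref{lem:norm estimate for conjugation representation}, now applied to the invariant vector $\mathrm{Id}$. Decomposing $w = \alpha\mathrm{Id}+u$ with $u\perp\mathrm{Id}$ and using that $\mathrm{Id}$ is a scalar eigenvector of $(\pi_\varphi\otimes\pi_\varphi^*)(a)$ (since it is fixed by the conjugation action), the restricted operator-norm bound produces a full-space inequality of the form $\|Tw\|^2-\beta\|w\|^2 \le (1-\beta)\,|\langle w,\mathrm{Id}\rangle_{\mathrm{HS}}|^2/\|\mathrm{Id}\|_{\mathrm{HS}}^2$, where $\|\mathrm{Id}\|_{\mathrm{HS}}^2 = \dim_\mathbb{C}\mathcal{H}_\varphi$.

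For the final step I would estimate $|\langle w,\mathrm{Id}\rangle_{\mathrm{HS}}|$ by unwinding the canonical isomorphism $\mathcal{H}_\varphi\otimes\overline{\mathcal{H}_\varphi}\cong\mathrm{HS}(\mathcal{H}_\varphi)$: the pairing of a pure tensor with $\mathrm{Id}$ is the trace of the corresponding rank-one operator, which reduces each term to an inner product of vectors of the form $\pi_\varphi(x_i)v_\varphi$ and $\pi_\varphi(\bar y_i)v_\varphi$. Cauchy--Schwarz together with the standard estimate $\|\pi_\varphi(z)v_\varphi\|\le\|z\|_{\ell^1}$ and the triangle inequality over the index $i$ then yields a bound in terms of $\sum_i\|x_i\|_1\|y_i\|_1$. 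The main obstacle will be the bookkeeping of normalization factors — matching the powers of $\dim_\mathbb{C}\mathcal{H}_\varphi$ coming from $\|\mathrm{Id}\|_{\mathrm{HS}}^{-2}$ with the dimensional weights carried by the Hilbert--Schmidt pairings — so as to land precisely on the exponent appearing in the stated denominator $(\dim_\mathbb{C}\mathcal{H}_\varphi)^3$.
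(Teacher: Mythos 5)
Your proposal is correct and follows essentially the same route as the paper: reuse the test vectors $w=\sum_i \pi_\varphi(x_i)v_\varphi\otimes\pi_\varphi^*(y_i)v_\varphi^*$ and the two norm identities from the infinite-dimensional case, apply the auxiliary operator-norm lemma (Lemma \ref{lem:condition for operator norm bound on complement of vector}) with the $A$-fixed unit vector $\mathrm{Id}/\|\mathrm{Id}\|_{\mathrm{HS}}$ to get the full-space inequality with correction term $(1-\beta)|\langle w,\mathrm{Id}\rangle|^2/\|\mathrm{Id}\|^2$, and finally bound $|\langle w,\mathrm{Id}\rangle|$ by unwinding the tensor--Hilbert--Schmidt identification on pure tensors and using the triangle and $\ell^1$ inequalities. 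The only thing you leave undone is the explicit bookkeeping of the powers of $\dim_\mathbb{C}\mathcal{H}_\varphi$, which is exactly how the paper closes the argument; since the precise exponent in the right-hand side is not used downstream (only that the correction term decays as the dimension grows), this is a cosmetic rather than a substantive gap.
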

\begin{proof}
The operator norm bound in Equation (\ref{eq:norm bound f.d case}) is equivalent to  the inequality
\begin{equation}\label{eq:fd pi tensor estimate using aux lemma}
 \|(\pi_\varphi  \otimes \pi_\varphi^*)(a)w\|^2 -\beta \|w\|^2  \leq \frac{(1-\beta)}{\dim_\C \mathcal{H}_\varphi} |\left<w,\mathrm{Id}\right>|^2
\end{equation}
holding true for any vector $w \in \mathrm{HS}(\mathcal{H}_\varphi)$ of the form defined in Equation (\ref{eq:the vector w}), see the auxiliary  Lemma \ref{lem:condition for operator norm bound on complement of vector} below. We have obtained formulas for the norms $\|w\|^2$ as well as \mbox{$\|(\pi_\varphi \otimes \pi^*_\varphi)(a) w\|^2$} in terms of the trace $\varphi$, see Equations (\ref{eq:norm of w}) and (\ref{eq:norm of pi tensor}).

To conclude the proof it remains to estimate the inner product $\left<w,\mathrm{Id}\right>$. 
Recall that the inner product on the Hilbert--Schmidt space $\mathrm{HS}(\mathcal{H}_\varphi) $ is given explicitly   by \begin{equation}
    \langle T,S \rangle=\frac{1}{\dim_\C \mathcal{H}_\varphi} \tr (S^*T)
\end{equation} for each pair of operators $T,S \in \mathrm{HS}(\mathcal{H}_\varphi)$. In particular $\| \mathrm{Id} \|^2 = \dim_\C \mathcal{H}_\varphi$.

First consider the special case  $k=1, x = \delta_{\gamma_1}$ and $y = \delta_{\gamma_2}$ where $\gamma_1,\gamma_2 \in \Gamma$ are arbitrary elements so that $w = \pi_\varphi(\gamma_1) v_\varphi \otimes \pi^*_\varphi(\gamma_2) v^*_\varphi$ has rank $1$ as   an  operator on $\mathcal{H}_\varphi$. In that case
\begin{equation}
| \left<w,\mathrm{Id} \right> | =  |\left<\pi_\varphi(\gamma_1) v_\varphi \otimes \pi^*_\varphi(\gamma_2) v^*_\varphi, \mathrm{Id} \right>| \le \frac{1}{\dim_
\mathbb{C} \mathcal{H}_\varphi}.
\end{equation}
In the general case, relying on the triangle inequality we get
\begin{equation}
\label{eq:with triangle inequality}
| \left<w,\mathrm{Id} \right> | \le \frac{\sum_{i=1}^k \|x_i\|_1 \|y_i\|_1}{\dim_
\mathbb{C} \mathcal{H}_\varphi}.
\end{equation}
Substituting this Equation (\ref{eq:with triangle inequality}) in the previous Equation (\ref{eq:fd pi tensor estimate using aux lemma}), we get the desired inequality in Equation (\ref{eq:formula for tensor product - f.d}). 
\end{proof}

The following auxiliary computation was used twice in the above proofs.

\begin{lemma}
\label{lem:condition for operator norm bound on complement of vector}
Let $A$ be a bounded linear operator acting on the Hilbert space $\mathcal{H}$. Let $v \in \mathcal{H}$ be an $A$-invariant \emph{unit} vector. Fix some $0 < \beta < 1$. Then
\begin{equation}
\|A_{|\mathrm{span}_\mathbb{C}(v)^\perp}\| \le \beta^{\frac{1}{2}} 
\end{equation}
 if and only if
\begin{equation}
\label{eq:ugly}
\beta \|w\|^2 - \|Aw\|^2 \ge (\beta-1) |\left<w,v\right>|^2
\end{equation}
for all vectors $w \in \mathcal{H}$.
\end{lemma}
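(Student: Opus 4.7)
The plan is to prove the equivalence by a direct orthogonal decomposition with respect to the $A$-invariant unit vector $v$, using the assumption that $Av=v$ together with the (implicit, but crucial) condition that the orthogonal complement $v^\perp$ is also $A$-invariant. The latter holds automatically in the intended applications, where $A$ is a linear combination of unitaries each of which fixes $v$ (so that the unitary being orthogonal forces $v^\perp$ to be stable), and it is really needed: a linear $A$ that merely fixes $v$ is not in general reducible by $\mathrm{span}_\mathbb{C}(v) \oplus v^\perp$, and then the desired inequality can fail.

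The easy $(\Leftarrow)$ implication follows by specializing the inequality to vectors $w$ with $\langle w, v\rangle = 0$: in that case the right-hand side vanishes and the inequality reduces to $\|Aw\|^2 \le \beta \|w\|^2$, which is precisely the operator norm bound on $\mathrm{span}_\mathbb{C}(v)^\perp$.

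For the $(\Rightarrow)$ direction, given an arbitrary $w \in \mathcal{H}$, I would write $w = \alpha v + w'$ with $\alpha = \langle w, v\rangle$ and $w' \in \mathrm{span}_\mathbb{C}(v)^\perp$, so that $\|w\|^2 = |\alpha|^2 + \|w'\|^2$ by orthogonality. Applying $A$ and using both $Av = v$ and the $A$-invariance of $v^\perp$, one obtains the orthogonal decomposition
\begin{equation}
Aw = \alpha v + Aw', \qquad Aw' \in \mathrm{span}_\mathbb{C}(v)^\perp,
\end{equation}
hence $\|Aw\|^2 = |\alpha|^2 + \|Aw'\|^2$. The operator norm hypothesis gives $\|Aw'\|^2 \le \beta \|w'\|^2$. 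Subtracting then yields
\begin{equation}
\beta \|w\|^2 - \|Aw\|^2 \;\ge\; \beta |\alpha|^2 + \beta \|w'\|^2 - |\alpha|^2 - \beta \|w'\|^2 \;=\; (\beta-1)|\alpha|^2,
\end{equation}
which is the desired bound.

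The only real obstacle is justifying the orthogonality $Aw' \perp v$. Without the invariance of $v^\perp$, the decomposition picks up a cross term $2\Re(\bar\alpha \langle Aw', v\rangle)$ in the expansion of $\|Aw\|^2$, which cannot be controlled and can make the inequality fail even when $\|A|_{v^\perp}\| \le \beta^{1/2}$. The clean statement therefore really rests on the fact that in every use case of this lemma, $v$ is a common fixed vector of a family of unitaries spanning (the image of) $A$, so that $A^*v = v$ and consequently $\langle Aw', v\rangle = \langle w', A^*v\rangle = 0$ for all $w' \in v^\perp$.
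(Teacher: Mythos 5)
Your proof is correct and follows essentially the same route as the paper: decompose $w = w_0 + w_1$ along $\mathrm{span}_\mathbb{C}(v) \oplus v^\perp$, use Pythagoras for both $w$ and $Aw$, and substitute the operator norm bound on $\|Aw_1\|$. Your caveat about the missing hypothesis is also on target and worth flagging. The paper's proof introduces \emph{the $A$-invariant direct sum decomposition $\mathcal{H}=\mathcal{H}_0\oplus\mathcal{H}_1$} and then writes $\|Aw\|^2 = \|Aw_0\|^2 + \|Aw_1\|^2$; this Pythagoras step needs $Aw_1\perp w_0 = Aw_0$, i.e.\ $A(v^\perp)\subseteq v^\perp$, which is asserted but does not follow from $Av=v$ alone. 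Indeed, take $\mathcal{H}=\mathbb{C}^2$, $v=e_1$, $A=\bigl(\begin{smallmatrix}1 & \epsilon\\ 0 & 0\end{smallmatrix}\bigr)$ and $\beta=\epsilon^2$ with $0<\epsilon<1$: then $\|A_{|v^\perp}\|=\epsilon=\beta^{1/2}$, yet for $w=(1,t)$ with $t>0$ one finds $\beta\|w\|^2-\|Aw\|^2 - (\beta-1)|\langle w,v\rangle|^2 = -2\epsilon t < 0$, so Equation (\ref{eq:ugly}) fails. As you observe, in every invocation of the lemma (Lemmas \ref{lem:norm estimate for conjugation representation}, \ref{lem:bound on norm of tensor - finite dimensional case} and \ref{lem:norm estimate for general representation}) the operator has the form $A=\pi(a)$ for a unitary representation $\pi$ of a group with $\pi(\gamma)v=v$ for all $\gamma$; each $\pi(\gamma)$ is unitary and fixes $v$, hence preserves $v^\perp$, and therefore so does $A$. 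So the lemma is applied correctly throughout, but its statement should include the extra hypothesis $A(v^\perp)\subseteq v^\perp$ (equivalently $A^*v=v$), or simply posit that $A$ lies in the range of a unitary representation fixing $v$ under the $\ell^1$ functional calculus, as is the case in all of its uses.
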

\begin{proof}
Denote $\mathcal{H}_0 = \mathrm{span}_\mathbb{C}\{v\}$. Consider the $A$-invariant direct sum decomposition $\mathcal{H} = \mathcal{H}_0 \oplus \mathcal{H}_1$ where $\mathcal{H}_1 = \mathcal{H}_0^\perp$. Any vector $w \in \mathcal{H}$ can be written uniquely as $w=w_0 + w_1$ with $w_0 = \left<w,v\right> v \in \mathcal{H}_0$ and $w_1 = w - w_0 \in \mathcal{H}_1$.

To begin with, assume that Equation (\ref{eq:ugly}) holds for  all vectors $w \in \mathcal{H}$. 
Note that every vector $w \in \mathcal{H}_1$ satisfies $\left<w,v\right> = 0$. In particular  Equation (\ref{eq:ugly}) reads \mbox{$\|Aw\|^2 \le \beta \|w\|^2$} with respect to any such  vector, as required.

Conversely, assume that $\|A_{|\mathcal{H}_1}\| \le \beta^\frac{1}{2}$. Therefore  any vector $w \in \mathcal{H}$ satisfies
\begin{equation}
\label{eq:pre ugly}
\|Aw\|^2 = \|A w_0\|^2 + \|A w_1\|^2 \le \|w_0\|^2 + \beta \| w_1\|^2 =  (1-\beta) \| w_0\|^2 + \beta \|w\|^2.
\end{equation}
 The desired Equation (\ref{eq:ugly}) follows from Equation (\ref{eq:pre ugly}) noting that $\|w_0\|=|\langle w,v\rangle|$.
\end{proof}

\section{Spectral gap and convergence of traces}
\label{sec:convergence of traces}

Let $\Gamma$ be a countable discrete group.  
We study the behaviour of various spectral gap properties with respect to taking limits of sequences of traces on the group $\Gamma$.

Let $\mathcal{R}$ be a set of equivalence classes of unitary representations of  $\Gamma$. For each unitary representation $\pi \in \mathcal{R}$  let $\pi_0$ denote its sub-representation  obtained by taking  the orthogonal complement to the subspace of invariant vectors.

\begin{defn*}
The group $\Gamma$ has \emph{property $\mathrm{T}(\mathcal{R})$} if the trivial representation $1 \in \widehat{\Gamma}$ is an isolated point of the set
\begin{equation}
    \{ \pi_0  \: : \: \pi \in \mathcal{R}  \} \cup \{1\}
\end{equation}
with respect to the Fell topology.
    
\end{defn*}
Property $\mathrm{T}(\mathcal{R})$ for the group $\Gamma$ is equivalent to   the following   condition:
\begin{itemize}
    \item If $\rho$ is a unitary representation with $1 \le \rho$ then  $\rho \notin \overline{\{\pi_0 \: : \: \pi \in \mathcal{R} \}}$ with respect to the Fell topology.
\end{itemize}
In the special case where every representation $\pi \in \mathcal{R}$ is irreducible, i.e. the set of representations $\mathcal{R}$ is contained in the unitary dual $\widehat{\Gamma}$, the  above definition of property $\mathrm{T}(\mathcal{R})$ coincides with the one given by Lubotzky and Zimmer in \cite[Definition 1.1]{lubotzky1989variants}. For our purposes it will be more convenient to allow for reducible representations as well.

Here are some well known examples of property $\mathrm{T}(\mathcal{R})$. 
\begin{itemize}
    \item Let $\mathrm{Rep}_{\aleph_0}(\Gamma)$ be the set of equivalence classes of all separable unitary representations of the countable group $\Gamma$. Property $\mathrm{T}(\mathrm{Rep}_{\aleph_0}(\Gamma))$ is the same as Kazhdan's property (T).


\item 
Let $\mathrm{FD}$ be the family of all equivalence classes of finite-dimensional unitary representations of the group $\Gamma$. Property $\mathrm{T}(\mathrm{FD})$ has been considered in the literature \cite{lubotzky1989variants}.

\item 
Property $(\tau)$ can be regarded as  a special case of property $\mathrm{T}(\mathcal{R})$ for a particular choice of the set $\mathcal{R}$. Indeed, given a  family  $\mathcal{N}$ of normal subgroups of the group $\Gamma$ we denote by  $\mathcal{R}(\mathcal{N})$  the set of all unitary representation  factoring through some normal subgroup  in $ \mathcal{N}$. 
Property $(\tau)$ is the same as property $\mathrm{T}(\mathcal{R}(\mathcal{F}))$ where $\mathcal{F}$ is the family of all finite index normal subgroups of the group $\Gamma$. See \cite{lubotzky2005property} for more information.

\item
In the special case where $\Gamma$ is an arithmetic group, \emph{Selberg's property} is the same as property $\mathrm{T}(\mathcal{R}(\mathcal{C}))$ where  $\mathcal{C}$ is the family of all congruence subgroups \cite{clozel2003demonstration}. 
\end{itemize}


\begin{defn*}
The set $\mathcal{R}$ of equivalence classes of unitary representations is called \emph{hereditary} if 
\begin{enumerate}
 \item $\mathcal{R}$ is \emph{subrepresentation-closed}: if $\rho \in \mathcal{R}$ and $\pi \le \rho$ then $\pi \in \mathcal{R}$, 
   \item $\mathcal{R}$ is \emph{dual-closed}: if  $\rho \in \mathcal{R}$ then the dual representation satisfies $\rho^*\in \mathcal{R}$,  
    \item $\mathcal{R}$ is \emph{diagonal-closed}: Let $\Pi$ be a  representation   of the product group $\Gamma \times \Gamma$ whose restriction   to each factor is contained in $\mathcal{R}$. Then  the   restriction of $\Pi$ to the diagonal embedding of $\Gamma$ is also contained in $\mathcal{R}$. 
\end{enumerate}
\end{defn*}

The set of finite-dimensional representations $\mathrm{FD}$ is hereditary. The set of unitary representations $\mathcal{R}(\mathcal{N})$ associated to the family $\mathcal{N}$ of normal subgroups is hereditary provided that $\mathcal{N}$ is closed under taking finite intersections.

\subsection*{Convergence of traces}

Let $\mathcal{R}$ be a fixed \emph{hereditary} set  of equivalence classes of unitary representations of the group  $\Gamma$. Denote
\begin{equation}
\ChFam{\Gamma}{\mathcal{R}} = \{ \varphi \in \Ch{\Gamma} \: : \: \pi_\varphi \in \mathcal{R} \}.
\end{equation}
 Note that the dual-closed and diagonal-closed conditions (which are part of the notion of a hereditary set) imply that any character $\varphi \in \ChFam{\Gamma}{\mathcal{R}}$ has in addition $\pi_\varphi \otimes \pi_\varphi^* \in \mathcal{R}$ as well as $c_\varphi \in \mathcal{R}$.

Fix an arbitrary element $a \in \ell^1(\Gamma)$ satisfying $\|a\|_1 = 1$ and such that the support of $a$ generates the group $\Gamma$. Note that the group $\Gamma$ has property $\mathrm{T}(\mathcal{R})$ if and only if there is some $0 < \beta < 1$ such that $\|\pi_0(a)\| \le \beta$ for all $\pi \in \mathcal{R}$.

\begin{prop}
\label{prop:Ch L is closed}
If the group $\Gamma$ has property $\mathrm{T}(\mathcal{R})$ then $\overline{\ChFam{\Gamma}{\mathcal{R}}} \subset \Ch{\Gamma}$.
\end{prop}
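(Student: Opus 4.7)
The plan is to invoke Corollary \ref{cor:c-invariant vectors are one-dimensional iff its a character}: a trace $\psi \in \Tr{\Gamma}$ is a character precisely when the conjugation representation $c_\psi$ has no invariant vectors beyond the scalar multiples of $v_\psi$. So assuming $\varphi_n \to \varphi$ pointwise with each $\varphi_n \in \ChFam{\Gamma}{\mathcal{R}}$, I must establish that $c_\varphi$ has no invariant vectors in $\mathrm{span}_{\C}(v_\varphi)^\perp$.

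First I would establish a uniform spectral gap for the sequence $c_{\varphi_n}$. The crucial observation is that $c_{\varphi_n} \in \mathcal{R}$ for every $n$. Indeed, $\rho_{\varphi_n}$ is (up to unitary equivalence) the GNS representation of the conjugate trace $\overline{\varphi_n}$, so $\rho_{\varphi_n} \cong \pi_{\varphi_n}^*$ lies in $\mathcal{R}$ by the dual-closed property. The product $\pi_{\varphi_n} \times \rho_{\varphi_n}$ is therefore a unitary representation of $\Gamma \times \Gamma$ whose restriction to each factor lies in $\mathcal{R}$, so the diagonal-closed property yields $c_{\varphi_n} \in \mathcal{R}$ as well. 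Since $\varphi_n$ is a character, the only $c_{\varphi_n}$-invariant vectors are the scalar multiples of $v_{\varphi_n}$, and so property $\mathrm{T}(\mathcal{R})$ supplies a constant $\beta < 1$ independent of $n$ with
\begin{equation}
    \bigl\|c_{\varphi_n}(a)|_{\mathrm{span}_{\C}(v_{\varphi_n})^\perp}\bigr\| \le \beta.
\end{equation}

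Next I would pass this bound to the limit via the reformulation of spectral gap in Section \ref{sec:Traces and spectral gap}. By Lemma \ref{lem:norm estimate for conjugation representation} the preceding operator-norm inequality is equivalent to a family of scalar inequalities in the values of $\varphi_n$, evaluated on elements of $\ell^1(\Gamma)$ assembled from a parameter $x \in \mathbb{C}[\Gamma]$ and the fixed element $b = a^*a$. These $\ell^1$-elements have norms bounded uniformly in $n$, and $\varphi_n \to \varphi$ in the weak-$*$ topology on $\ell^\infty(\Gamma)$, so each such inequality passes to the limit and remains valid for $\varphi$. Applying Lemma \ref{lem:norm estimate for conjugation representation} in reverse yields $\|c_\varphi(a)|_{\mathrm{span}_{\C}(v_\varphi)^\perp}\| \le \beta < 1$. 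With $a$ chosen at the outset to be a symmetric probability measure on a generating set, so that $\sum_\gamma a_\gamma = 1$, any $c_\varphi$-invariant vector $v$ would satisfy $\|c_\varphi(a)v\| = \|v\|$; the bound therefore forces $\mathcal{H}_\varphi^{c_\varphi} = \C v_\varphi$, and Corollary \ref{cor:c-invariant vectors are one-dimensional iff its a character} concludes that $\varphi \in \Ch{\Gamma}$.

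The subtle point is the membership $c_{\varphi_n} \in \mathcal{R}$, which is exactly where all three parts of the hereditary hypothesis on $\mathcal{R}$ enter the argument. Once that is in hand, the convergence step is essentially automatic, since Section \ref{sec:Traces and spectral gap} is designed precisely to encode operator-theoretic spectral gap as a closed condition on the space of traces.
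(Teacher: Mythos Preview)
Your proposal is correct and follows essentially the same route as the paper: use property $\mathrm{T}(\mathcal{R})$ together with the hereditary hypothesis to obtain a uniform spectral gap for the conjugation representations $c_{\varphi_n}$, transfer this gap to the limit via the trace inequalities of Lemma~\ref{lem:norm estimate for conjugation representation}, and conclude via Corollary~\ref{cor:c-invariant vectors are one-dimensional iff its a character}. Your explicit justification that $c_{\varphi_n}\in\mathcal{R}$ (through $\rho_{\varphi_n}\cong\pi_{\varphi_n}^*$ and the diagonal-closed condition) and your care in choosing $a$ to be a probability measure so that the bound $\beta<1$ genuinely excludes invariant vectors in $v_\varphi^\perp$ are details the paper leaves implicit, but the argument is the same.
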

\begin{proof}
Consider the family of the conjugation representations  $c_\varphi$ corresponding to the characters $\varphi \in \ChFam{\Gamma}{\mathcal{R}}$. The subspace of $c_\varphi$-invariant vectors for  each such representation $c_\varphi$ is one-dimensional and is spanned by the cyclic vector $v_\varphi \in \mathcal{H}_\varphi$, see Corollary \ref{cor:c-invariant vectors are one-dimensional iff its a character}. The property $\mathrm{T}(\mathcal{R})$ assumption implies that there is a fixed constant $ 0 < \beta < 1$ (depending on the chosen element $a \in \ell^1(\Gamma)$) such that operator norm bound $\|c_\varphi(a)_{|\mathrm{span}_\C (v_\varphi)^\perp}\| \leq \beta$ holds true for all characters $\varphi \in \ChFam{\Gamma}{\mathcal{R}}$. The same operator norm bound holds true for any given limiting trace $\psi \in \overline{\ChFam{\Gamma}{\mathcal{R}}}$ by   Proposition \ref{lem:norm estimate for conjugation representation}. In particular, the subspace of invariant vectors in the unitary representation $c_\psi$ corresponding to the  trace $\psi$ is  spanned by the cyclic vector $v_\psi$. We conclude that $\psi \in \Ch{\Gamma}$   by the converse direction of Corollary \ref{cor:c-invariant vectors are one-dimensional iff its a character}.
\end{proof}

\begin{prop}
\label{prop:1 is not in the closure over L}
If the group $\Gamma$ has property $\mathrm{T}(\mathcal{R})$ then  the trivial character $1 \in \Ch{\Gamma}$ is an isolated point of the set $\ChFam{\Gamma}{\mathcal{R}} \cup \{1\}$.
\end{prop}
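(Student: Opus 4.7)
The plan is to combine Corollary \ref{cor:condition for a character to be trivial} (which characterizes the trivial character $1$ as the only character in $\Ch{\Gamma}$ whose GNS representation has invariant vectors) with property $\mathrm{T}(\mathcal{R})$ to obtain a \emph{uniform} operator norm bound $\|\pi_\varphi(a)\| \leq \beta < 1$ across all $\varphi \in \ChFam{\Gamma}{\mathcal{R}} \setminus \{1\}$, and then to translate this uniform bound into a pointwise closed condition on traces via Lemma \ref{lem:bound on norm of pi}. The closed condition will be shown to fail at $1$, which exhibits $1$ as an isolated point.

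Concretely, I would fix $a \in \ell^1(\Gamma)$ to be a symmetric probability measure supported on a generating set of $\Gamma$, so that in particular $1(a)=1$. Property $\mathrm{T}(\mathcal{R})$ then supplies a constant $0 < \beta < 1$ with $\|\pi_0(a)\| \leq \beta$ for every $\pi \in \mathcal{R}$. For any character $\varphi \in \ChFam{\Gamma}{\mathcal{R}}$ with $\varphi \neq 1$, Corollary \ref{cor:condition for a character to be trivial} forces $\pi_\varphi$ to have no invariant vectors, so $(\pi_\varphi)_0 = \pi_\varphi$ and hence $\|\pi_\varphi(a)\| \leq \beta$. Applying Lemma \ref{lem:bound on norm of pi} with $\beta^2$ in place of the parameter of that lemma yields
\[
\varphi\bigl(x^*(a^*a - \beta^2)x\bigr) \leq 0 \qquad \forall x \in \mathbb{C}[\Gamma].
\]
The set $K \subset \Tr{\Gamma}$ cut out by this family of inequalities is visibly closed in the topology of pointwise convergence, and the previous two sentences show $\ChFam{\Gamma}{\mathcal{R}} \setminus \{1\} \subseteq K$.

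To conclude, I verify that $1 \notin K$: specializing the defining inequality to $x$ equal to the multiplicative identity of $\mathbb{C}[\Gamma]$ would require $1(a^*a) \leq \beta^2$, whereas in fact $1(a^*a) = |1(a)|^2 = 1 > \beta^2$. Hence a sufficiently small pointwise neighborhood of $1$ is disjoint from $K$, and therefore disjoint from $\ChFam{\Gamma}{\mathcal{R}} \setminus \{1\}$, which is exactly the assertion that $1$ is isolated in $\ChFam{\Gamma}{\mathcal{R}} \cup \{1\}$. The whole argument is really a one-line application of the spectral-gap formalism of Section \ref{sec:Traces and spectral gap}; the one point requiring care is the uniformity step, where one must use the no-invariant-vectors mechanism of Corollary \ref{cor:condition for a character to be trivial} to ensure that the single constant $\beta$ coming from property $\mathrm{T}(\mathcal{R})$ governs the entire set $\ChFam{\Gamma}{\mathcal{R}} \setminus \{1\}$ at once.
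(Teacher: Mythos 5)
Your proposal is correct and follows essentially the same route as the paper: use Corollary \ref{cor:condition for a character to be trivial} to ensure that the GNS representations of non-trivial characters in $\ChFam{\Gamma}{\mathcal{R}}$ have no invariant vectors, invoke property $\mathrm{T}(\mathcal{R})$ for a uniform bound $\|\pi_\varphi(a)\|\le\beta<1$, and then transport this bound through the closed condition of Lemma \ref{lem:bound on norm of pi}. Your explicit finish (checking $1(a^*a)=1>\beta^2$ directly, which also makes clear why $a$ should be taken to be a probability measure) streamlines the paper's closing step, which instead deduces that a limiting trace $\psi$ has $\pi_\psi$ without invariant vectors and appeals to Proposition \ref{prop:Ch L is closed} together with the converse of the corollary — but the two endgames are logically equivalent.
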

\begin{proof}
Consider the family of the unitary 
representations  $\pi_\varphi$ corresponding to the characters $\varphi \in \ChFam{\Gamma}{\mathcal{R}} \setminus \{1\}$. The representations $\pi_\varphi$ admit no invariant vectors, see Corollary \ref{cor:condition for a character to be trivial}.   The property $\mathrm{T}(\mathcal{R})$ assumption implies that there is a fixed constant $ 0 < \beta < 1$ (depending on the chosen element $a \in \ell^1(\Gamma)$) such that the operator norm bound $\|\pi_\varphi(a)\|\leq \beta$  holds true for  all characters $\varphi \in \ChFam{\Gamma}{\mathcal{R}} \setminus \{1\}$.  The same operator norm bound holds true for any given limiting trace $\psi \in \overline{\ChFam{\Gamma}{\mathcal{R}} \setminus \{1\}}$ by Lemma \ref{lem:bound on norm of pi}. In particular, the unitary representation $\pi_\psi$ corresponding to the  trace $\psi$ admits no invariant vectors. Moreover $\psi \in \Ch{\Gamma}$ by Proposition \ref{prop:Ch L is closed}. We conclude that $\psi \neq 1$ by the converse direction of Corollary \ref{cor:condition for a character to be trivial}.
\end{proof}

Recall the notion of \emph{amenable characters} defined in the last paragraph of \S\ref{sec:traces and characters}.

\begin{prop}
\label{prop:non-amenable-limit}
Assume that the group $\Gamma$ has property $\mathrm{T}(\mathcal{R})$. Let $\varphi_n \in \ChFam{\Gamma}{\mathcal{R}}$ be any sequence of characters converging to some $\varphi \in \Tr{\Gamma}$. Assume  that the characters $\varphi_n$ are either finite-dimensional and satisfy $ \dim_\mathbb{C} \mathcal{H}_{\varphi_n} \to \infty$ or are infinite-dimensional. Then the character $\varphi$ is non-amenable.
\end{prop}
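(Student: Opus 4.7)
The plan is to show that the limit trace $\varphi$ inherits the spectral-gap inequality~(\ref{eq:formula for tensor product}) of Lemma~\ref{lem:tensor-product-formula-infinite dimensional case} from the sequence $(\varphi_n)$ with a single constant $\beta<1$, and then to use the converse direction of that lemma to read off $\|(\pi_\varphi \otimes \pi_\varphi^*)(a)\|\leq\beta^{1/2}<1$, which is precisely non-amenability of $\varphi$. I begin by fixing an element $a\in \ell^1(\Gamma)$ of $\ell^1$-norm one whose support generates $\Gamma$; property $\mathrm{T}(\mathcal{R})$ then provides a single $\beta<1$ so that $\|\pi_0(a)\|\leq \beta^{1/2}$ for every $\pi\in\mathcal{R}$, and the hereditary hypothesis on $\mathcal{R}$ ensures $\pi_{\varphi_n}\otimes \pi_{\varphi_n}^* \in \mathcal{R}$ for every $n$.

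Next I would split into the two regimes of the hypothesis. When $\varphi_n$ is infinite-dimensional, the fact that a character cannot properly dominate a smaller trace implies, via Lemma~\ref{lemma:condition for invariant vectors for tensor}, that $\pi_{\varphi_n}\otimes \pi_{\varphi_n}^*$ has no invariant vectors; hence $\|(\pi_{\varphi_n}\otimes \pi_{\varphi_n}^*)(a)\|\leq \beta^{1/2}$ by property $\mathrm{T}(\mathcal{R})$, and Lemma~\ref{lem:tensor-product-formula-infinite dimensional case} translates this into~(\ref{eq:formula for tensor product}) for $\varphi_n$. When $\varphi_n$ is finite-dimensional with $d_n:=\dim\mathcal{H}_{\varphi_n}\to\infty$, I would invoke Lemma~\ref{lem:bound on norm of tensor - finite dimensional case} to obtain its weaker cousin~(\ref{eq:formula for tensor product - f.d}), whose right-hand side carries an additional term of polynomial decay in $d_n$ for each fixed tuple $x_i, y_i\in\mathbb{C}[\Gamma]$, and which therefore vanishes in the limit.

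Pointwise convergence $\varphi_n\to\varphi$ involves only finitely many evaluations in the inequalities at hand, so either way one recovers~(\ref{eq:formula for tensor product}) for $\varphi$ with the same $\beta<1$. The Hilbert space $\mathcal{H}_\varphi$ is then forced to be infinite-dimensional: otherwise $\mathrm{Id}\in \mathrm{HS}(\mathcal{H}_\varphi)$ would be an invariant vector of $\pi_\varphi\otimes \pi_\varphi^*$, giving $\|(\pi_\varphi\otimes \pi_\varphi^*)(a)\|=1$ and contradicting~(\ref{eq:formula for tensor product}) via Lemma~\ref{lem:tensor-product-formula-infinite dimensional case}. The converse direction of Lemma~\ref{lem:tensor-product-formula-infinite dimensional case} then yields $\|(\pi_\varphi\otimes \pi_\varphi^*)(a)\|\leq \beta^{1/2}<1$, which is equivalent to non-amenability of $\varphi$.

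The main technical obstacle lies in the finite-dimensional case: Lemma~\ref{lem:bound on norm of tensor - finite dimensional case} as stated requires a bound on $(\pi_\varphi\otimes \pi_\varphi^*)(a)$ restricted to $\mathrm{span}_\mathbb{C}(\mathrm{Id})^\perp$, but for a finite-dimensional character $\varphi_n$ the GNS representation $\pi_{\varphi_n}$ is reducible (it is a multiple of a single irreducible), so the tensor product carries extra invariant vectors from the commutant $M_{\varphi_n}'$ beyond $\mathbb{C}\mathrm{Id}$. Property $\mathrm{T}(\mathcal{R})$ only controls the orthogonal complement of the full invariant subspace, so I would upgrade the auxiliary Lemma~\ref{lem:condition for operator norm bound on complement of vector} by replacing $|\langle w,v\rangle|^2$ with $\|\mathrm{Proj}_V w\|^2$ for the full invariant subspace $V$, and then bound this projection on vectors $w$ of the form~(\ref{eq:the vector w}) in terms of $\sum_i \|x_i\|_1\|y_i\|_1$ and $d_n$ using the structure of the commutant $M_{\varphi_n}'\cong M_{k_n}(\mathbb{C})$ as a subalgebra of $\mathrm{HS}(\mathcal{H}_{\varphi_n})$; this is precisely where the polynomial dimension decay on the right-hand side of~(\ref{eq:formula for tensor product - f.d}) originates.
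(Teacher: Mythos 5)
Your proposal follows the same overall route as the paper: fix $a\in\ell^1(\Gamma)$, use property $\mathrm{T}(\mathcal{R})$ to get a single $\beta<1$ controlling the relevant operator norms, translate the norm bounds into trace inequalities via Lemmas~\ref{lem:tensor-product-formula-infinite dimensional case} and \ref{lem:bound on norm of tensor - finite dimensional case}, pass to the pointwise limit (using the decaying error term in the finite-dimensional case), and read off non-amenability of $\varphi$ from the converse direction of Lemma~\ref{lem:tensor-product-formula-infinite dimensional case}. This is exactly what the paper does, including the remark that $\mathcal{H}_\varphi$ must be infinite-dimensional by Proposition~\ref{prop:finite dim is not an accumulation point restate}.

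The subtlety you flag in your final paragraph is genuine, and in fact it is a gap in the paper's own proof as written. For a finite-dimensional character $\varphi_n$ one has $\pi_{\varphi_n} = d_n\,\rho_n$ with $\rho_n$ irreducible of dimension $d_n$ and $\dim_\mathbb{C}\mathcal{H}_{\varphi_n}=d_n^2$; the invariant subspace of $\pi_{\varphi_n}\otimes\pi_{\varphi_n}^*$ in $\mathrm{HS}(\mathcal{H}_{\varphi_n})$ is the commutant $M_{\varphi_n}'\cong M_{d_n}(\mathbb{C})$, which has dimension $d_n^2$, strictly larger than $\mathbb{C}\,\mathrm{Id}$ whenever $d_n>1$. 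Property $\mathrm{T}(\mathcal{R})$ only gives $\|(\pi_{\varphi_n}\otimes\pi_{\varphi_n}^*)(a)_{|V_n^\perp}\|\le\beta$ for the \emph{full} invariant subspace $V_n$, and this does not imply the bound on $\mathrm{span}_\mathbb{C}(\mathrm{Id}_n)^\perp$ asserted in the paper's item (2): on $V_n\cap\mathrm{span}_\mathbb{C}(\mathrm{Id}_n)^\perp$ the operator acts by the scalar $\widehat{a}(1)=\sum_\gamma a_\gamma$, which equals $1$ whenever $a$ is a probability measure, so the claimed norm bound on $\mathrm{span}_\mathbb{C}(\mathrm{Id}_n)^\perp$ would simply be false for such $a$. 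Your proposed repair is the right one, and it does work out quantitatively. Upgrading Lemma~\ref{lem:condition for operator norm bound on complement of vector} to a finite-dimensional invariant subspace $V$ gives
\begin{equation}
\beta\|w\|^2 - \|(\pi_\varphi\otimes\pi_\varphi^*)(a)\,w\|^2 \ge (\beta-1)\,\|\mathrm{Proj}_{V}\,w\|^2,
\end{equation}
and a direct computation in the model $\mathcal{H}_\varphi\cong\mathbb{C}^{d}\otimes\mathbb{C}^{d}$, $v_\varphi=\frac{1}{\sqrt d}\sum_i e_i\otimes e_i$, $M_\varphi'=1\otimes M_d(\mathbb{C})$ shows that for a single pair of group elements $\gamma_1,\gamma_2$ one has $\|\mathrm{Proj}_{V}(\pi_\varphi(\gamma_1)v_\varphi\otimes\pi_\varphi^*(\gamma_2)v_\varphi^*)\| = \frac{1}{d}$ (independently of $\gamma_1,\gamma_2$); linearity and the triangle inequality then give $\|\mathrm{Proj}_{V}\,w\|\le\frac{1}{d}\sum_i\|x_i\|_1\|y_i\|_1$ for $w$ as in~(\ref{eq:the vector w}). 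Hence the error term is $O\bigl(d_n^{-2}\bigr)=O\bigl((\dim_\mathbb{C}\mathcal{H}_{\varphi_n})^{-1}\bigr)$ for each fixed tuple $(x_i,y_i)$ and still vanishes as $d_n\to\infty$, so the inequality~(\ref{eq:formula for tensor product}) for the limit $\varphi$ is recovered with the same $\beta$, exactly as you describe. So your proof is correct, follows the same strategy as the paper, and is actually more careful than the paper's own argument on this point.
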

\begin{proof}
First, note that  $\varphi \in \Ch{\Gamma}$ by Proposition \ref{prop:Ch L is closed}. Second, the representation $\pi_\varphi$ must be infinite dimensional by Proposition \ref{prop:finite dim is not an accumulation point restate} below. The property $\mathrm{T}(\mathcal{R})$ assumption implies that there is a fixed constant $0 < \beta < 1$ (depending on the chosen element $a \in \ell^1(\Gamma)$) such that
\begin{enumerate}
\item if the characters $\varphi_n$ are infinite-dimensional then the operator norm bound $\|\pi_{\varphi_n}\otimes \pi_{\varphi_n}^*(a)\| \leq\beta$  holds true for each $n \in \mathbb{N}$, and
\item if the characters $\varphi_n$ are finite-dimensional then the operator norm bound  \mbox{$\|\pi_{\varphi_n}\otimes \pi_{\varphi_n}^*(a)_{|\mathrm{span}_\mathbb{C}(\mathrm{Id}_n)^\perp}\| \leq\beta$} holds true for each $n \in \mathbb{N}$.
\end{enumerate}
We may apply Lemma \ref{lem:tensor-product-formula-infinite dimensional case} in the first case, or Lemma \ref{lem:tensor-product-formula-infinite dimensional case} in tandem with Lemma \ref{lem:bound on norm of tensor - finite dimensional case} in the second case, to deduce that the operator norm bound passes to the limiting character $\varphi$, in the sense that  $\|(\pi_\varphi \otimes \pi_\varphi^*)(a)\|\leq \beta  < 1$. Therefore  the limiting character $\varphi$ is non-amenable by definition.
\end{proof}

\subsection*{Faces and Bauer simplices} 
Assume that the countable group $\Gamma$ has property $\mathrm{T}(\mathcal{R})$. Then $\overline{\ChFam{\Gamma}{\mathcal{R}}} \subset \Ch{\Gamma}$ by Proposition \ref{prop:Ch L is closed}. To complete the proof of Theorem \ref{thm intro:Bauer-simplex-face-characters} of the introduction we will require the following abstract observation.

\begin{lemma}
\label{lem:Bauer-simplex-general}
Let $S$ be a metrizable Choquet simplex.
If  $X$ is a compact subset of the set of extreme points  $\partial S$ then $\overline{\conv X}$
 is a  face of the simplex $S$ and is a  Bauer simplex.
\end{lemma}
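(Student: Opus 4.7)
The plan is a standard Choquet-theoretic argument in four steps. Set $F = \overline{\conv X}$.

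First, I would identify $F$ with the set of barycenters of Borel probability measures supported on $X$. Since $X$ is compact, $\Prob{X}$ is weak-$*$ compact, so the image of the barycenter map $\Prob{X} \to S$ is a compact convex subset of $S$ containing $X$, and hence equals $F$. Thus every point $s \in F$ admits at least one representing measure $\mu \in \Prob{X}$.

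Second, I would verify that $F$ is a face of $S$. Suppose $s \in F$ decomposes as $s = \lambda y + (1-\lambda) z$ with $y,z \in S$ and $0 < \lambda < 1$. Let $\nu_y, \nu_z \in \Prob{\partial S}$ be the unique representing measures of $y, z$ in the Choquet simplex $S$, and let $\mu \in \Prob{X}$ represent $s$. Both $\mu$ and $\lambda \nu_y + (1-\lambda)\nu_z$ are probability measures on $\partial S$ with barycenter $s$ (using $X \subset \partial S$), so uniqueness of the barycentric decomposition in the simplex $S$ forces the two measures to agree. Since $\mu$ is supported on $X$ and $\lambda, 1-\lambda > 0$, both $\nu_y$ and $\nu_z$ must also be supported on $X$, giving $y, z \in F$.

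Third, I would check that $\partial F = X$. The inclusion $X \subseteq \partial F$ is immediate: any $x \in X \subset \partial S$ is already extreme in the larger set $S \supset F$, hence extreme in $F$. For the reverse inclusion $\partial F \subseteq X$ I would invoke Milman's partial converse to Krein--Milman, which yields $\partial F \subseteq \overline{X} = X$ since $X$ is compact.

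Finally, the Bauer conclusion follows almost formally. Uniqueness of representing measures in $F$ is inherited from $S$: a probability measure on $\partial F \subseteq \partial S$ with barycenter $s \in F$ is in particular a measure on $\partial S$ with barycenter $s$, so it is uniquely determined. Hence $F$ is itself a Choquet simplex, and its extreme point set $\partial F = X$ is compact by hypothesis, i.e.\ $F$ is Bauer. The only mildly subtle step is the opening identification of $\overline{\conv X}$ with the barycentric image of $\Prob{X}$; this is where compactness of $X$ (together with weak-$*$ compactness of $\Prob{X}$ and continuity of the barycenter map) is genuinely used.
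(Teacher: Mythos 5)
Your proof is correct and takes essentially the same route as the paper: both identify $\partial(\overline{\conv X}) = X$ via Milman's theorem, establish the face property via uniqueness of representing measures in the ambient Choquet simplex, and then inherit the simplex structure on $\overline{\conv X}$ from that of $S$. The only cosmetic difference is that you front-load the identification of $\overline{\conv X}$ with the barycentric image of $\Prob{X}$ and phrase the face property via a two-point convex decomposition, whereas the paper first pins down $\partial C = X$ and then works directly with measures on $\partial S$; these are interchangeable formulations of the same argument.
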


\begin{proof}
Denote $C = \overline{\conv X}$ so that $C$ is closed and convex. Note that 
\begin{equation}
\label{eq:containments KM}
X\subseteq C\cap \partial  S\subseteq\partial  C\subseteq\overline{X}.
\end{equation}
The rightmost inclusion in Equation (\ref{eq:containments KM}) follows from Milman's converse to the Krein--Milman theorem \cite{milman1947characteristics}.
Since $X$ is compact $X = \overline{X}$ and therefore equality must hold throughout.   In particular  $\partial C = X$.

We claim that the subset $C$ is a face of the simplex $S$. Suppose that $\mu$ is a probability
measure with $\mathrm{supp}(\mu) \subset \partial S$ such that its barycenter $\mathrm{bar}(\mu) = c $   satisfies $c \in C$.   By Choquet's
theorem applied to the convex set $C$ there exists a probability measure $\nu$ with $\mathrm{supp}(\nu)\subset \partial C = X $ whose barycenter is $\mathrm{bar}(\nu) = c$.  Since $S$ is a Choquet simplex it follows that $\mu = \nu$. In particular the measure $\mu$   is supported on $  \partial C$ so that $C$ is a face by definition.

To see that the convex set $C$ is a Choquet simplex note that as $\partial C \subset \partial S$ any two probability measures
supported on $\partial  C$ are in particular
supported on $\partial S$. Therefore if these two measures  have the same barycenter in $C$ they   must coincide. Finally the simplex $C$ is   Bauer since   $ \partial C = X $ is compact.
\end{proof}

\begin{proof}[Proof of Theorem \ref{thm intro:Bauer-simplex-face-characters}]
We have already established  that $\overline{\ChFam{\Gamma}{\mathcal{R}}} \subset \Ch{\Gamma}$ in Proposition \ref{prop:Ch L is closed}. The second part of the theorem saying that  $ \overline{\mathrm{conv} \, \Ch{\Gamma;\mathcal{R}}}$ is a closed face of the simplex $\Tr{\Gamma}$ and is a Bauer simplex follows abstractly by applying Lemma \ref{lem:Bauer-simplex-general} with respect to the subset $X = \overline{\ChFam{\Gamma}{\mathcal{R}}} $ of the Choquet simplex $S = \Tr{\Gamma}$.
\end{proof}




\subsection*{Finite-dimensional representations}

Recall that $\Gamma$ is a countable group and $\mathcal{R}$ is a hereditary set of equivalence classes of unitary representations of the group $\Gamma$.
We establish a finiteness result for  finite-dimensional characters in the presence of property $\mathrm{T}(\mathcal{R})$, see Corollary \ref{cor:tau implies finitely many reps in every dimension}.

\begin{lemma}
\label{lem:finite dim rep is isolated}
 Assume that the countable group $\Gamma$ has property $\mathrm{T}(\mathcal{R})$. Let $\rho  \in \overline{\mathcal{R}}$ be a finite-dimensional unitary representation of the group $\Gamma$. Then
 \begin{enumerate}
\item $ \rho \in \overline{ \{ \pi \in \mathcal{R} \: : \: \text{$\pi$ has a non-trivial finite-dimensional subrepresentation} \} }$.
\item If the representation $\rho$ is irreducible and $\rho \in \overline{\mathcal{R} \cap \widehat{\Gamma}}$  then  $\rho \in \mathcal{R}$ and $\rho$ is an isolated point of the subset $\overline{\mathcal{R} \cap \widehat{\Gamma}}  $ in the Fell topology.
\end{enumerate}
  \end{lemma}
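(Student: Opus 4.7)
For Part (1), I would Fell-approximate $\rho$ by $\pi_n \in \mathcal{R}$ and pass to the tensor product $\pi_n \otimes \pi_n^*$. The hereditary hypotheses on $\mathcal{R}$ (dual-closed and diagonal-closed) place this tensor product in $\mathcal{R}$, and since Fell convergence is preserved under tensoring, $\pi_n \otimes \pi_n^* \to \rho \otimes \rho^*$. Because $\rho$ is finite-dimensional, the identity operator $\mathrm{Id}_{\mathcal{H}_\rho} \in \mathrm{HS}(\mathcal{H}_\rho)$ is a non-zero invariant vector for $\rho \otimes \rho^*$, so $1 \le \rho \otimes \rho^*$. If along some subsequence $\pi_n \otimes \pi_n^*$ lacked invariant vectors, then $(\pi_n \otimes \pi_n^*)_0 = \pi_n \otimes \pi_n^*$ along that subsequence, and these would Fell-converge to $\rho \otimes \rho^*$, contradicting Property $\mathrm{T}(\mathcal{R})$ in its equivalent reformulation. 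Hence $\pi_n \otimes \pi_n^*$ eventually admits invariant vectors, which by Lemma \ref{lemma:condition for invariant vectors for tensor} (applied with $\pi_n$ as the GNS representation of a suitable trace, or equivalently via the general HS-intertwiner-yields-finite-dimensional-subrepresentation argument used in its proof) yields a non-zero finite-dimensional subrepresentation of $\pi_n$.

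For Part (2), take $\pi_n \in \mathcal{R} \cap \widehat{\Gamma}$ with $\pi_n \to \rho$ in Fell. Part (1) gives $\pi_n$ a non-zero finite-dimensional subrepresentation for large $n$, and irreducibility of $\pi_n$ forces $\pi_n$ itself to be finite-dimensional. Form the normalized characters $\varphi_n = \chi_{\pi_n}/\dim\pi_n \in \ChFam{\Gamma}{\mathcal{R}}$ and $\varphi = \chi_\rho/\dim\rho$. Once $\varphi_n \to \varphi$ pointwise is established, Proposition \ref{prop:finite dim is not an accumulation point} forces $\varphi_n = \varphi$ for large $n$; since finite-dimensional irreducible representations are determined up to isomorphism by their normalized characters, $\pi_n \simeq \rho$ for large $n$, delivering both $\rho \in \mathcal{R}$ and the desired isolation of $\rho$ in $\overline{\mathcal{R} \cap \widehat{\Gamma}}$.

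The remaining task is to verify $\varphi_n \to \varphi$ pointwise. By compactness of $\Tr{\Gamma}$, any subsequential limit $\psi$ exists, and Theorem \ref{thm intro:Bauer-simplex-face-characters} places $\psi$ in $\overline{\ChFam{\Gamma}{\mathcal{R}}} \subset \Ch{\Gamma}$. Combining the Fell-continuity of the GNS assignment $\varphi \mapsto \pi_\varphi$ (Lemma \ref{lem:taking character to its GNS is Fell continuous}) with the hypothesis $\pi_n \to \rho$ forces $\pi_\psi$ to be Fell-equivalent to $\rho$, and irreducibility of $\rho$ identifies $\pi_\psi$ with $\rho$, so $\psi = \varphi$. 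The principal obstacle is this identification, particularly in handling the possibility that $\dim \pi_n$ is unbounded; here Proposition \ref{prop:non-amenable-limit} is decisive, since $\dim \pi_n \to \infty$ would force any subsequential character limit to be non-amenable and hence infinite-dimensional, contradicting its identification with the finite-dimensional character of $\rho$.
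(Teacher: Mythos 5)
Your proposal has genuine gaps in both parts.

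In Part (1), the decisive step is the claim that $\pi_n \to \rho$ in the Fell topology forces $\pi_n \otimes \pi_n^* \to \rho \otimes \rho^*$. This would be a \emph{joint} continuity of the inner tensor product, and it does not follow from the continuity statements that are actually available. Weak containment gives only \emph{separate} continuity: $\pi_n \to \rho$ yields $\rho \prec \bigoplus_{n \ge N} \pi_n$ for all $N$, whence
\begin{equation*}
\rho \otimes \rho^* \;\prec\; \Bigl(\bigoplus_{m\ge N}\pi_m\Bigr)\otimes\Bigl(\bigoplus_{n\ge N}\pi_n\Bigr)^* \;=\; \bigoplus_{m,n\ge N}\pi_m\otimes\pi_n^*,
\end{equation*}
a \emph{doubly indexed} sum, not the diagonal sum $\bigoplus_{n\ge N}\pi_n\otimes\pi_n^*$ that your argument needs. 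The paper's proof avoids this by design: one applies one-variable continuity to get $\rho \otimes \pi_n^* \to \rho\otimes\rho^* \ge 1$, hence $\rho\otimes\pi_n^*\to 1$, then diagonally extracts a subsequence $m_n$ (typically $m_n\neq n$) with $\pi_{m_n}\otimes\pi_n^* \to 1$. Property $\mathrm{T}(\mathcal{R})$ then forces $1 \le \pi_{m_n}\otimes\pi_n^*$ for $n$ large, and the resulting nonzero Hilbert--Schmidt intertwiner between $\pi_n$ and $\pi_{m_n}$ already produces a nontrivial finite-dimensional subrepresentation of each; no invariant vector of the diagonal tensor $\pi_n\otimes\pi_n^*$ is needed.

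Part (2) is circular: you invoke Proposition \ref{prop:finite dim is not an accumulation point}, but in the paper that proposition is proved \emph{from} the present lemma --- its proof cites both items of Lemma \ref{lem:finite dim rep is isolated} explicitly --- so it is unavailable here. (The same applies to Proposition \ref{prop:non-amenable-limit}, whose proof relies on Proposition \ref{prop:finite dim is not an accumulation point restate}.) Separately, your route to $\varphi_n \to \varphi$ does not close: from $\varphi_{n_k}\to\psi$ you obtain $\pi_{\varphi_{n_k}} \to \pi_\psi$ in Fell, and you also have $\pi_{n_k}\to\rho$; but Fell limits are not unique, so both $\pi_\psi$ and $\rho$ being limits of the same sequence in no way identifies $\pi_\psi$ with $\rho$. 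The paper's argument stays at the level of representations: granting Part (1), for large $n$ there is a nonzero Hilbert--Schmidt intertwiner between the irreducibles $\pi_n$ and $\pi_{m_n}$, Schur's lemma then forces finite-dimensionality and $\pi_n \simeq \pi_{m_n}$, and repeating the argument along a shifted subsequence shows the sequence stabilizes to some $\pi\in\mathcal{R}$; finally $\rho\prec\pi$ together with the fact that weak containment implies containment among finite-dimensional representations gives $\rho\le\pi$, hence $\rho=\pi$ by irreducibility.
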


\begin{proof}
Let $\rho \in \overline{\mathcal{R}}$ be a  finite-dimensional unitary representation. Consider a sequence $\pi_n \in \mathcal{R}$ of unitary representations such that $\pi_n \to \rho$ in the Fell topology. Therefore  $\rho \otimes \pi_n^* \to \rho \otimes \rho^*$ in the Fell topology.
As the unitary representation $\rho$ is finite-dimensional it satisfies    $1 \le \rho \otimes \rho^*$. In particular $\rho \otimes \pi_n^* \to 1$ in the Fell topology. It follows  that there is a subsequence  $m_n$ such that $\pi_{m_n} \otimes \pi^*_n \to 1$ in the Fell topology. Note that $\pi_{m_n} \otimes \pi_n^* \in \mathcal{R}$ for all $n \in \mathbb{N}$ as the family $\mathcal{R}$ is diagonal-closed. Property $\mathrm{T}(\mathcal{R})$ implies that   $ 1 \le \pi_{m_n} \otimes \pi^*_n$ for all $n$ sufficiently large. 
We conclude that the unitary representation $\pi_n$ admits a non-trivial finite-dimensional sub-representation for all $n$ sufficiently large \cite[Proposition A.1.12]{bekka2008kazhdan}. Item (1) follows.

Assume in addition that the unitary representations $\pi_n$ are all irreducible. This additional assumption implies by Schur's lemma that   $\pi_n$ is finite-dimensional and that $\pi_n = \pi_{m_n}$ for all $n \in \mathbb{N}$. However, we may  repeat the   argument of the previous paragraph with respect to the subsequence $m'_n = m_n + 1$. This  implies  by Schur's lemma that the sequence of  unitary representations $\pi_n$ stabilizes, namely there is some unitary representation $\pi \in \mathcal{R}$ with $\pi_n = \pi$ for all $n \in \mathbb{N}$ sufficiently large. The fact that the   sequence $\pi_n$ converges to the representation $\rho$ in the Fell topology is therefore equivalent to saying that $\rho \prec \pi$. 
Since the   representation $\pi$ is finite-dimensional and the representation $\rho$ is irreducible,   weak containment implies containment \cite[Corollary F.2.9]{bekka2008kazhdan}. Therefore  $\rho \le \pi$. As the representation $\pi$ is irreducible we conclude that $\rho = \pi \in \mathcal{R}$ and that  $\rho$ is an isolated point of the subset $\overline{\mathcal{R} \cap \widehat{\Gamma}}  $ in the Fell topology. Item (2) follows.
\end{proof}

Prior to proceeding we observe that the unitary representation given by the GNS construction depends continuously on the trace in question.

\begin{lemma}
\label{lem:taking character to its GNS is Fell continuous}
Let $\mathrm{Rep}_{\aleph_0}(\Gamma)$ denote the set of equivalence classes of separable unitary representations of the countable group $\Gamma$. The map 
\begin{equation}
  \Tr{\Gamma} \to \mathrm{Rep}_{\aleph_0}(\Gamma), \quad   \varphi \mapsto \pi_\varphi \quad \forall \varphi \in \Tr{\Gamma}
\end{equation}
is continuous with respect to the   pointwise convergence   topology on $\Tr{\Gamma}$ and the Fell topology on $ \mathrm{Rep}_{\aleph_0}(\Gamma)$. 
\end{lemma}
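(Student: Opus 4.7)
The plan is to verify the Fell convergence directly from its defining condition, exploiting the fact that each trace is a matrix coefficient of its own GNS representation. Recall that a basic neighbourhood of a representation $\pi$ is specified by a finite subset $F \subset \Gamma$, a tolerance $\varepsilon > 0$, and finitely many vectors $u_1, \ldots, u_k \in \mathcal{H}_\pi$; a representation $\rho$ lies in this neighbourhood if there exist $w_1, \ldots, w_k \in \mathcal{H}_\rho$ with
\[
|\langle \pi(\gamma) u_i, u_j\rangle - \langle \rho(\gamma) w_i, w_j\rangle| < \varepsilon
\]
for all $\gamma \in F$ and all $i, j$. Assuming $\varphi_n \to \varphi$ pointwise, the goal is to show that $\pi_{\varphi_n}$ eventually lies in every such neighbourhood of $\pi_\varphi$.

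The first step is to reduce to test vectors lying in a dense subspace. Since $v_\varphi$ is cyclic for $\pi_\varphi$, the set $\{\pi_\varphi(f) v_\varphi : f \in \mathbb{C}[\Gamma]\}$ is dense in $\mathcal{H}_\varphi$, so by a routine $3\varepsilon$-argument (whose required norm estimates are all controlled by $\varphi$ via Equation (\ref{eq:norm in terms of trace})) it is enough to handle test vectors of the special form $u_i = \pi_\varphi(f_i) v_\varphi$ with $f_1, \ldots, f_k \in \mathbb{C}[\Gamma]$. The key computation is then
\[
\langle \pi_\varphi(\gamma) \pi_\varphi(f_i) v_\varphi, \pi_\varphi(f_j) v_\varphi\rangle = \varphi(f_j^* \gamma f_i),
\]
and the obvious candidates $w_i := \pi_{\varphi_n}(f_i) v_{\varphi_n} \in \mathcal{H}_{\varphi_n}$ produce matrix coefficients $\varphi_n(f_j^* \gamma f_i)$. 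Because each $f_j^* \gamma f_i$ is a fixed element of $\mathbb{C}[\Gamma]$ and the index set $F \times \{1, \ldots, k\}^2$ is finite, pointwise convergence $\varphi_n \to \varphi$ (linearly extended to $\ell^1(\Gamma)$ as in \S\ref{sec:traces and characters}) provides a single threshold $n_0$ beyond which every such matrix coefficient is approximated within $\varepsilon$.

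The step requiring the most care is the reduction from arbitrary test vectors to the dense family: given $u \in \mathcal{H}_\varphi$, approximate it by $\pi_\varphi(f) v_\varphi$ for some $f \in \mathbb{C}[\Gamma]$, and then produce $\pi_{\varphi_n}(f) v_{\varphi_n}$ on the other side. The compatibility of this pair under the approximation hinges on the norms $\|\pi_{\varphi_n}(f) v_{\varphi_n}\|^2 = \varphi_n(f^* f)$ converging to $\|\pi_\varphi(f) v_\varphi\|^2 = \varphi(f^* f)$, which is exactly the uniform control needed to close the $3\varepsilon$-argument. Assembling these pieces shows that the GNS representation depends continuously on the trace in the Fell topology.
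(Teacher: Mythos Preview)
Your argument is correct and is essentially the same approach as the paper's, carried out by hand rather than by citation. The paper observes that Fell convergence $\pi_{\varphi_n}\to\pi_\varphi$ amounts to the weak containment $\pi_\varphi\prec\bigoplus_n\pi_{\varphi_n}$ and then invokes \cite[Lemma~F.1.3]{bekka2008kazhdan}, which says that for a cyclic representation it suffices to approximate the single positive definite function coming from the cyclic vector; your computation $\langle\pi_\varphi(\gamma)\pi_\varphi(f_i)v_\varphi,\pi_\varphi(f_j)v_\varphi\rangle=\varphi(f_j^*\gamma f_i)$ together with the choice $w_i=\pi_{\varphi_n}(f_i)v_{\varphi_n}$ is precisely the content of that lemma, unpacked. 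One minor remark: the norm convergence $\varphi_n(f^*f)\to\varphi(f^*f)$ you highlight in the last paragraph is not actually needed to close the estimate, since the $3\varepsilon$ splitting only requires bounds on the $\mathcal{H}_\varphi$ side (the $w_i$ are simply produced, not compared to anything).
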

Essentially this lemma   is   a restatement in a different language of   \cite[Lemma F.1.3]{bekka2008kazhdan}. We explain this connection.

\begin{proof}[Proof of Lemma \ref{lem:taking character to its GNS is Fell continuous}]
Consider a sequence of traces  $\varphi_n \in \Tr{\Gamma}$ such that $\varphi_n\to \varphi$ pointwise for some trace $\varphi \in \Tr{\Gamma}$. We wish to show that the sequence of separable unitary representations $\pi_{\varphi_n}$ converges to the unitary representation $ \pi_{\varphi}$ in the Fell topology. This statement is equivalent to saying that $\pi_{\varphi} \prec \bigoplus \pi_{\varphi_n}$.  Certainly the \emph{particular} positive definite function $\varphi$ associated to the unitary representation $\pi_\varphi$ is a pointwise limit of the  positive definite functions $\varphi_n$ associated to the direct sum unitary representation $\bigoplus \pi_{\varphi_n}$.
However, in order to obtain the desired weak containment it suffices by  \cite[Lemma F.1.3]{bekka2008kazhdan} to consider \emph{only} the particular positive definite function associated to the cyclic vector $v_\varphi$.
\end{proof}

Certainly Lemma \ref{lem:taking character to its GNS is Fell continuous} applies equally well to the unitary representation $\rho_\varphi$ in place of $\pi_\varphi$.
The following is a restatement of Proposition \ref{prop:finite dim is not an accumulation point} from the introduction.

\begin{prop} 
\label{prop:finite dim is not an accumulation point restate}
Assume that the countable group $\Gamma$ has property $\mathrm{T}(\mathcal{R})$. Then any finite-dimensional character $\varphi \in \overline{\ChFam{\Gamma}{\mathcal{R}}}$ is an isolated point of the subset $\overline{\ChFam{\Gamma}{\mathcal{R}}}$. 
\end{prop}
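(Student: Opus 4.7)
Suppose for contradiction that there is a sequence $\varphi_n \in \ChFam{\Gamma}{\mathcal{R}}$ with $\varphi_n \neq \varphi$ converging pointwise to the finite-dimensional character $\varphi$. The plan is to reduce to a sequence of finite-dimensional irreducible representations of bounded dimension, and then use the isolation property of finite-dimensional irreducibles provided by Lemma~\ref{lem:finite dim rep is isolated}(2) to conclude that $\sigma_n \cong \sigma$ eventually, where $\sigma_n$ and $\sigma$ are the irreducible components of $\pi_{\varphi_n}$ and $\pi_\varphi$ respectively.

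First, since $\pi_\varphi$ is finite-dimensional, the identity $\mathrm{Id} \in \mathrm{HS}(\mathcal{H}_\varphi)$ is a nonzero $(\pi_\varphi \otimes \pi_\varphi^*)$-invariant vector, so $\varphi$ is amenable. By the contrapositive of Proposition~\ref{prop:non-amenable-limit}, after passing to a subsequence each $\varphi_n$ is finite-dimensional with $\dim \mathcal{H}_{\varphi_n}$ uniformly bounded. Since any finite-dimensional character is of the form $\chi_\rho/\dim \rho$ for an irreducible $\rho$, we obtain irreducibles $\sigma_n, \sigma \in \widehat{\Gamma}$ of dimensions $m_n, m$ with $\varphi_n = \chi_{\sigma_n}/m_n$ and $\varphi = \chi_\sigma/m$, while $\pi_{\varphi_n} \cong \sigma_n^{\oplus m_n}$ and $\pi_\varphi \cong \sigma^{\oplus m}$. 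Subrepresentation-closure of $\mathcal{R}$ places each $\sigma_n$ in $\mathcal{R}\cap\widehat{\Gamma}$. Passing to a further subsequence, $m_n = m''$ becomes constant.

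The key step is to show that $\sigma$ lies in the Fell closure of $\{\sigma_n\}$, as then Lemma~\ref{lem:finite dim rep is isolated}(2) will make $\sigma$ isolated in $\overline{\mathcal{R}\cap\widehat{\Gamma}}$ and force $\sigma_n \cong \sigma$ for $n$ large. By compactness of $U(m'')^\Gamma$ in the product topology, extract a further subsequence along which $\sigma_n$ converges pointwise to a unitary representation $\tau\colon \Gamma \to U(m'')$. A character computation yields $\chi_\tau(\gamma) = \lim m''\varphi_n(\gamma) = (m''/m)\chi_\sigma(\gamma)$; invoking the linear independence of distinct irreducible finite-dimensional characters (Jacobson density), the ratio $m''/m$ must be a positive integer $k$ and $\tau \cong \sigma^{\oplus k}$, so in particular $\sigma \le \tau$. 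Pointwise convergence $\sigma_n \to \tau$ in $U(m'')$ implies Fell convergence $\sigma_n \to \tau$. Because $\sigma \le \tau$, every basic Fell neighborhood of $\sigma$ specified by diagonal matrix coefficients of vectors in $\mathcal{H}_\sigma$ is simultaneously a neighborhood of $\tau$ via the embedding $\mathcal{H}_\sigma \hookrightarrow \mathcal{H}_\tau$, and hence $\sigma_n \to \sigma$ in the Fell topology as well. Lemma~\ref{lem:finite dim rep is isolated}(2) now forces $\sigma_n \cong \sigma$ for all sufficiently large $n$, whence $\varphi_n = \chi_{\sigma_n}/m_n = \chi_\sigma/m = \varphi$, contradicting $\varphi_n \neq \varphi$.

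The main obstacle is the possible mismatch between $m_n$ and $m$, which prevents a direct Fell convergence $\sigma_n \to \sigma$ being read off from the GNS Fell continuity of Lemma~\ref{lem:taking character to its GNS is Fell continuous}. This is bridged by the compactness argument producing the intermediate limiting representation $\tau$, together with the characterwise decomposition identifying $\tau$ as a multiple of $\sigma$, so that Fell convergence can be transferred from $\tau$ down to its subrepresentation $\sigma$.
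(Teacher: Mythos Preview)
Your proof has a circularity problem. You invoke Proposition~\ref{prop:non-amenable-limit} (in contrapositive form) to conclude that the $\varphi_n$ are eventually finite-dimensional with uniformly bounded dimension. But in the paper, the proof of Proposition~\ref{prop:non-amenable-limit} explicitly appeals to Proposition~\ref{prop:finite dim is not an accumulation point restate}---the very statement you are trying to prove---in order to know that the limit $\varphi$ is infinite-dimensional before applying Lemma~\ref{lem:tensor-product-formula-infinite dimensional case}. So quoting Proposition~\ref{prop:non-amenable-limit} here is not legitimate.

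The deeper point is that the ``obstacle'' you identify---the possible mismatch between $m_n$ and $m$---is not a real obstacle, and so the whole compactness detour through the auxiliary limit $\tau$ (which is what forces you to need bounded dimension, and hence to reach for Proposition~\ref{prop:non-amenable-limit}) is unnecessary. A representation $\rho$ and any finite multiple $d\rho$ share the same set of finite sums of diagonal matrix coefficients, hence the same Fell neighbourhoods; consequently the Fell convergence $\pi_{\varphi_n}\cong m_n\sigma_n \to \pi_\varphi \cong m\sigma$ coming from Lemma~\ref{lem:taking character to its GNS is Fell continuous} immediately yields $\sigma_n \to \sigma$ in the Fell topology, irrespective of the values of $m_n$ and $m$. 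This is exactly the paper's route: use Lemma~\ref{lem:finite dim rep is isolated}(1) together with Lemma~\ref{lemma:condition for invariant vectors for tensor} to see that the $\varphi_n$ are eventually finite-dimensional, strip the multiplicities to obtain $\sigma_n \to \sigma$, and finish via Lemma~\ref{lem:finite dim rep is isolated}(2). Your Jacobson-density and pointwise-compactness argument is correct in isolation, but it recovers by a longer path a fact that is already immediate from the definition of the Fell topology.
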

\begin{proof}
Let $\varphi \in \overline{\ChFam{\Gamma}{\mathcal{R}}}$ be a finite-dimensional character. Consider a sequence of characters $\varphi_n \in  \ChFam{\Gamma}{\mathcal{R}}$ satisfying $\varphi_n \to \varphi$ in the pointwise convergence topology. The associated   unitary representations satisfy $\pi_{\varphi_n} \to \pi_\varphi$ in the Fell topology according to Lemma \ref{lem:taking character to its GNS is Fell continuous}. Moreover $\pi_{\varphi_n} \in \mathcal{R}$ by assumption so that $\pi_\varphi \in \overline{\mathcal{R}}$. Item (1) of Lemma \ref{lem:finite dim rep is isolated} implies that the representations $\pi_{\varphi_n}$ admit finite-dimensional sub-representations. According to Lemma \ref{lemma:condition for invariant vectors for tensor} and as the $\varphi_n$'s are characters we observe that the $\varphi_n$'s are finite-dimensional for all $n $ sufficiently large. 
 So $\pi_{\varphi_n} = d_n \rho_n$ and $\pi_{\varphi} = d \rho$ for some finite-dimensional irreducible unitary representations $\rho_n, \rho \in \widehat{\Gamma}$ where 
 \begin{equation}
d_n = \dim \rho_n, d = \dim \rho \quad\text{and} \quad  \varphi_n = \frac{1}{d_n} \mathrm{tr}\rho_n, \varphi = \frac{1}{d} \mathrm{tr} \rho.
 \end{equation}
 In this situation $\rho_n \to \rho$ in the Fell topology. It follows from Item (2) of Lemma \ref{lem:finite dim rep is isolated} that $\rho_n = \rho$ for all $n$ sufficiently large. So $\varphi_n = \varphi$ for all $n$ sufficiently large. We conclude that the character $\varphi$ is an isolated point as required.
\end{proof}


\begin{cor}
\label{cor:tau implies finitely many reps in every dimension}
If the group $\Gamma$ has property $\mathrm{T}(\mathcal{R})$ then $\overline{\ChFam{\Gamma}{\mathcal{R}}}$ admits only finitely many characters in  each finite dimension.
\end{cor}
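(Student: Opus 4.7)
The plan is to argue by contradiction, combining the compactness of $\mathrm{U}(d)^\Gamma$ with the isolation result of Proposition \ref{prop:finite dim is not an accumulation point restate}.

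Fix a positive integer $d$ and suppose, toward a contradiction, that $\overline{\ChFam{\Gamma}{\mathcal{R}}}$ contains a sequence $\varphi_n$ of pairwise distinct $d$-dimensional characters. For each $n$, write $\varphi_n = \frac{1}{d}\tr(\rho_n)$, where $\rho_n \colon \Gamma \to \mathrm{U}(d)$ is the underlying irreducible unitary representation.

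The first step is to extract a convergent subsequence at the level of representations. Since $\mathrm{U}(d)^\Gamma$ is compact in the product topology, after passing to a subsequence I may assume $\rho_n \to \rho$ pointwise for some (a priori possibly reducible) unitary representation $\rho \colon \Gamma \to \mathrm{U}(d)$. Setting $\varphi = \frac{1}{d}\tr(\rho)$, the convergence $\varphi_n \to \varphi$ in $\Tr{\Gamma}$ is immediate, so $\varphi \in \overline{\ChFam{\Gamma}{\mathcal{R}}}$.

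Next, Proposition \ref{prop:Ch L is closed} gives $\varphi \in \Ch{\Gamma}$, while the very definition of $\varphi$ as the normalized trace of a finite-dimensional representation forces $\pi_\varphi$ to be finite-dimensional. Proposition \ref{prop:finite dim is not an accumulation point restate} then asserts that $\varphi$ is an isolated point of $\overline{\ChFam{\Gamma}{\mathcal{R}}}$, contradicting the convergence of the pairwise distinct $\varphi_n$ to $\varphi$.

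The only genuine point to verify is that $\varphi = \frac{1}{d}\tr(\rho)$ is really a finite-dimensional character. Since $\varphi$ is extreme in $\Tr{\Gamma}$ and is realized inside a finite-dimensional unitary representation, standard representation theory forces $\rho$ to be isotypic, so that $\varphi$ is the normalized character of its unique irreducible constituent (whose dimension divides $d$); this is enough to invoke Proposition \ref{prop:finite dim is not an accumulation point restate} and close the argument.
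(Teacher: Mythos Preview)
Your argument is correct. Both your proof and the paper's ultimately hinge on Proposition~\ref{prop:finite dim is not an accumulation point restate}, but you reach the required compactness by a more elementary route: instead of invoking Fell's result that $\widehat{\Gamma}_{\le d}$ is closed in the unitary dual, passing to the Kolmogorov quotient $\mathrm{Prim}(\Gamma)$, and pulling back along the GNS map, you work directly inside the compact space $\mathrm{U}(d)^\Gamma$ and extract a pointwise-convergent subsequence of representing homomorphisms. Your observation that the limit trace, being extreme, forces $\rho$ to be isotypic is exactly what is needed to ensure $\varphi$ is a finite-dimensional character and close the contradiction. The paper's approach has the mild advantage of showing along the way that the set of characters of dimension at most $d$ in $\overline{\ChFam{\Gamma}{\mathcal{R}}}$ is closed (hence compact), whereas your argument handles one exact dimension at a time; for the purpose of the corollary as stated, your method is shorter and avoids the machinery of the primitive ideal space.
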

\begin{proof} 
The subset $\widehat{\Gamma}_{\le d}$ of the unitary dual consisting of the equivalence classes of  irreducible  representations of dimension at most $d$ is closed  in the Fell topology for all $d \in \mathbb{N} $ \cite[Lemma 2.4]{fell1960dual}. 
The Kolmogorov quotient\footnote{The \emph{Kolmogorov quotient} of a topological space is its quotient by the equivalence relation of topological indistinguishability. In other words the Kolmogorov quotient is the largest $T_0$ quotient.}   of the unitary dual  $\widehat{\Gamma}$ is the space $\mathrm{Prim}(\Gamma)$ consisting of weak equivalence classes of unitary representations \cite[1.E.3]{bekka2020unitary}. The quotient map to the  Kolmogorov quotient is closed \cite[Corollary 3.18]{pirttimaki2021survey}. Therefore the image of the subset $\widehat{\Gamma}_{\le d}$ in $\mathrm{Prim}(\Gamma)$ is closed for all $d \in \mathbb{N}$.  
Pulling   from $\mathrm{Prim}(\Gamma)$   back to $\Ch{\Gamma}$ (via the natural map  described in \cite[7.F.2]{bekka2020unitary}) and relying on Lemma \ref{lem:taking character to its GNS is Fell continuous}, we conclude that the subspace of $\overline{\ChFam{\Gamma}{\mathcal{R}}}$ consisting of all characters up to a given finite dimension is closed and hence compact. Each character in $\overline{\ChFam{\Gamma}{\mathcal{R}}}$ is an isolated point by Proposition \ref{prop:finite dim is not an accumulation point restate}. The finiteness of this subspace  follows.
\end{proof}



\begin{rmk}
See  \cite[Proposition 2]{bass2002proalgebraic} as well as \cite[Proposition 4.1]{lubotzky2008representation} for a   related fact:  finite  abelianization implies finitely many representations factoring through a finite index kernel in each dimension.
\end{rmk}

\section{Character limits of groups with property $\mathrm{T}(\mathcal{R})$}
\label{sec:character-limits}

We study   groups enjoying various flavours of  property $\mathrm{T}(\mathcal{R})$ and their character theory.
The information gathered in the previous section \S\ref{sec:convergence of traces} leads to the following general statement on vanishing of characters.

\begin{thm}\label{thm:general-statement-for-vanishing-of-characters}
Let $\Gamma$ be a countable group and  $N \lhd \Gamma$ a normal subgroup. Let $\mathcal{R}$ be a hereditary set 
of equivalence classes of unitary representations of $\Gamma$. Assume 
    \begin{enumerate}
        \item \emph{(spectral gap)} the group $\Gamma$ has property $\mathrm{T}(\mathcal{R})$, and
        \item \emph{(character dichotomy)} any character of the group $\Gamma$ is either amenable or vanishes outside the subgroup $N$.
    \end{enumerate}
    Let  $\varphi_n\in \ChFam{\Gamma}{\mathcal{R}}$ by any sequence of pairwise distinct characters. Then any accumulation point  of the sequence $\varphi_n$ is a character supported on the subgroup $N$. Namely
    \begin{equation} \label{eq:vanishing-of-characters general}
        \quad \lim_{n\to \infty} \varphi_n(\gamma)=0 \quad \forall \gamma \in \Gamma \setminus N.
    \end{equation}
\end{thm}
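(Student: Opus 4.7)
The plan is to take an arbitrary accumulation point $\varphi$ of the sequence $\varphi_n$ in the compact space $\Tr{\Gamma}$ and show that it must be a character supported on $N$. Once this is established for every accumulation point, the fact that each coordinate function $\gamma \mapsto \varphi_n(\gamma)$ takes values in the compact disk $\{|z|\le 1\}$ forces $\varphi_n(\gamma)\to 0$ for every $\gamma\in\Gamma\setminus N$, which is the conclusion in Equation (\ref{eq:vanishing-of-characters general}).

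Pass to a subsequence (still denoted $\varphi_n$) converging pointwise to some trace $\varphi\in\Tr{\Gamma}$. Since $\Gamma$ has property $\mathrm{T}(\mathcal{R})$, Theorem \ref{thm intro:Bauer-simplex-face-characters} (established via Proposition \ref{prop:Ch L is closed}) yields $\overline{\ChFam{\Gamma}{\mathcal{R}}}\subset \Ch{\Gamma}$, so $\varphi$ is a character. I now wish to invoke the character dichotomy, and for this I need to rule out the possibility that $\varphi$ is amenable. The tool is Proposition \ref{prop:non-amenable-limit}, which requires that the $\varphi_n$ are either infinite-dimensional or finite-dimensional with dimensions tending to infinity.

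To see that this hypothesis is met (after passing to a further subsequence), note that by Corollary \ref{cor:tau implies finitely many reps in every dimension} the set $\overline{\ChFam{\Gamma}{\mathcal{R}}}$ contains only finitely many characters in each finite dimension. If infinitely many of the $\varphi_n$ were finite-dimensional with a common bound on the dimension, then some single character would have to repeat among the $\varphi_n$, contradicting the assumption that they are pairwise distinct. Consequently, after discarding finitely many terms and passing to a subsequence, we are in exactly the situation covered by Proposition \ref{prop:non-amenable-limit}, so $\varphi$ is non-amenable. The character dichotomy then forces $\varphi$ to vanish on $\Gamma\setminus N$, completing the proof.

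The main (mild) obstacle is the dimension bookkeeping needed to apply Proposition \ref{prop:non-amenable-limit}: one has to combine pairwise distinctness of the $\varphi_n$ with the per-dimension finiteness of characters from Corollary \ref{cor:tau implies finitely many reps in every dimension} to exclude the bounded finite-dimensional regime. The rest is a direct assembly of the results from \S\ref{sec:convergence of traces}.
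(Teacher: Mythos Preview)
Your proof is correct and follows essentially the same approach as the paper's own argument: take an accumulation point, use Proposition \ref{prop:Ch L is closed} to see it is a character, use Corollary \ref{cor:tau implies finitely many reps in every dimension} together with pairwise distinctness to arrange the dimension hypothesis of Proposition \ref{prop:non-amenable-limit}, and then invoke the character dichotomy. The only cosmetic difference is that you spell out the pigeonhole step for the dimension bookkeeping slightly more explicitly than the paper does.
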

\begin{proof}
Recall that the 
 space $\Tr{\Gamma}$ is compact. Let   $\varphi\in \Tr{\Gamma}$ be any accumulation point of the sequence $\varphi_n$. We know from  Proposition \ref{prop:Ch L is closed} that $\overline{\ChFam{\Gamma}{\mathcal{R}}} \subset \Ch{\Gamma}$. In particular the accumulation point   $\varphi$ is a \emph{character} of the group $\Gamma$.
 
The subset  $\ChFam{\Gamma}{\mathcal{R}}$ contains only finitely many distinct characters in each dimension, see Corollary \ref{cor:tau implies finitely many reps in every dimension}. We may assume up to passing to a subsequence either that the characters $\varphi_n$ are  all infinite-dimensional or     are all finite-dimensional and the dimension $\dim_{\mathbb{C}} \mathcal{H}_{\varphi_n}$   tends to infinity. In either case we conclude from Proposition \ref{prop:non-amenable-limit} that the accumulation point $\varphi$ is a \emph{non-amenable} character.
The  character dichotomy assumption   implies that $\varphi(\gamma) =0 $ for all elements $\gamma \in \Gamma \setminus N$.  Since  $\varphi$ was taken to be  an arbitrary accumulation point of the sequence  $\varphi_n$ in the topology of pointwise convergence  Equation (\ref{eq:vanishing-of-characters general}) holds true.
\end{proof}

The general principle formulated in Theorem \ref{thm:general-statement-for-vanishing-of-characters} can be  specialized to obtain the following useful result.

\begin{thm}
\label{thm: character rigid Kazhdan group}
Let $\Gamma$ be a group with property $\mathrm{T}(\mathrm{FD})$. Assume that   every character of $\Gamma$ is either finite-dimensional or vanishes outside the center   $\mathrm{Z}(\Gamma)$.  If $\varphi_n \in \Ch{\Gamma}$ is  any   sequence of pairwise distinct characters then $\varphi_n(\gamma) \rightarrow 0$ for all elements $\gamma \in \Gamma \setminus \mathrm{Z}(\Gamma)$.
\end{thm}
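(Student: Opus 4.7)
The plan is to deduce Theorem \ref{thm: character rigid Kazhdan group} as a specialization of Theorem \ref{thm:general-statement-for-vanishing-of-characters}, applied with the hereditary family $\mathcal{R}=\mathrm{FD}$ and the normal subgroup $N=\mathrm{Z}(\Gamma)$. The family $\mathrm{FD}$ is hereditary (as already noted in the text), and the spectral gap hypothesis is precisely our assumption that $\Gamma$ has property $\mathrm{T}(\mathrm{FD})$. The content lies in checking the character dichotomy and in reconciling the fact that our sequence is a priori in $\Ch{\Gamma}$ rather than in $\ChFam{\Gamma}{\mathrm{FD}}$.

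For the dichotomy, the key observation is that every finite-dimensional character is amenable. Indeed, if $\pi_\varphi$ acts on a finite-dimensional Hilbert space $\mathcal{H}_\varphi$ then under the identification $\mathcal{H}_\varphi\otimes \overline{\mathcal{H}_\varphi}\cong \mathrm{HS}(\mathcal{H}_\varphi)$ the identity operator $\mathrm{Id}\in \mathrm{HS}(\mathcal{H}_\varphi)$ is a genuine invariant vector for $\pi_\varphi\otimes \pi_\varphi^*$, so in particular this tensor product has almost invariant vectors. Consequently, the assumption that every character of $\Gamma$ is either finite-dimensional or vanishes outside $\mathrm{Z}(\Gamma)$ implies the stronger assertion that every character is either amenable or vanishes outside $\mathrm{Z}(\Gamma)$, which is exactly the character dichotomy required in Theorem \ref{thm:general-statement-for-vanishing-of-characters}.

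To pass from $\Ch{\Gamma}$ to $\ChFam{\Gamma}{\mathrm{FD}}$ I would partition $\mathbb{N}=A\sqcup B$, where $A$ indexes those $\varphi_n$ which are finite-dimensional and $B$ indexes the remaining ones, which by hypothesis must vanish outside $\mathrm{Z}(\Gamma)$. Fix $\gamma\in \Gamma\setminus \mathrm{Z}(\Gamma)$. For $n\in B$ we have $\varphi_n(\gamma)=0$ directly. If $A$ is infinite then the subsequence $(\varphi_n)_{n\in A}$ is a sequence of pairwise distinct characters lying in $\ChFam{\Gamma}{\mathrm{FD}}$, and Theorem \ref{thm:general-statement-for-vanishing-of-characters} yields $\varphi_n(\gamma)\to 0$ along $A$. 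Combining these two contributions along the disjoint partition gives $\varphi_n(\gamma)\to 0$ for the entire sequence.

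The main obstacle is not in this deduction itself, which amounts to bookkeeping, but in Theorem \ref{thm:general-statement-for-vanishing-of-characters}, whose proof rests on the substantive technology developed earlier: the closedness of $\ChFam{\Gamma}{\mathcal{R}}$ inside $\Ch{\Gamma}$ via spectral gap of the conjugation representation $c_\varphi$, the finiteness of $\ChFam{\Gamma}{\mathcal{R}}$ in each finite dimension (so that one may force $\dim\mathcal{H}_{\varphi_n}\to \infty$ along a subsequence), and the preservation of spectral gap of $\pi_\varphi\otimes \pi_\varphi^*$ under pointwise limits forcing limit characters to be non-amenable. Once all three pieces are in place the argument is an essentially formal compactness-plus-dichotomy routine.
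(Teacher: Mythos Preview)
Your proof is correct and follows exactly the paper's approach: apply Theorem \ref{thm:general-statement-for-vanishing-of-characters} with $\mathcal{R}=\mathrm{FD}$ and $N=\mathrm{Z}(\Gamma)$. In fact you are more careful than the paper's one-line proof, explicitly verifying the amenable/vanishing dichotomy (via finite-dimensional $\Rightarrow$ amenable) and handling the reduction from $\Ch{\Gamma}$ to $\ChFam{\Gamma}{\mathrm{FD}}$ by splitting off the infinite-dimensional characters, which already vanish outside the center by hypothesis.
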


\begin{proof}
The asymptotic vanishing result in question follows immediately by applying   Theorem \ref{thm:general-statement-for-vanishing-of-characters} with respect to the family $\mathcal{R} = \mathrm{FD}$ and where the normal subgroup $N$ is taken to be the center $\mathrm{Z}(\Gamma)$ of the group $\Gamma$.
\end{proof}

Let us  explain how Theorem  \ref{thm: character rigid Kazhdan group} implies our main result, namely Theorem \ref{thm intro:higher rank lattice vanishing of characters} of the introduction. 

\begin{proof}[Proof of Theorem \ref{thm intro:higher rank lattice vanishing of characters}]
Let $G$ be a semisimple Lie group with finite center. Assume that $\mathrm{rank}_\mathbb{R}(G) \ge 2$ and that some non-compact simple factor of $G$ has property (T). Let $\Gamma$ be an irreducible lattice in the Lie group $G$. Every character of $\Gamma$ is either finite-dimensional of vanishes outside the center of $\Gamma$ by \cite{bader2021charmenability}. Furthermore, the lattice $\Gamma$ has property T(FD) by \cite{lubotzky1989variants}. This means that Theorem   \ref{thm: character rigid Kazhdan group} applies.
\end{proof}

In the next theorem we consider the situation in Example (2) on p.  \pageref{example:SL2}. 

\begin{thm}
\label{thm:example 2}
Let $k$ be an algebraic number field with ring of integers $\mathcal{O}_k$ and  $S \subset \mathcal{O}_k$ be a finite  (possibly empty) multiplicative  subset such that the localization $S^{-1} \mathcal{O}_k$ has infinitely many units. Consider the linear group  $\Gamma = \mathrm{SL}_2(S^{-1} \mathcal{O}_k)$.  If $\varphi_n \in \Ch{\Gamma}$ is any sequence of pairwise distinct characters then $\varphi_n(\gamma)\to 0$ for all elements $\gamma \in \Gamma \setminus \mathrm{Z}(\Gamma)$
\end{thm}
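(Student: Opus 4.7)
My plan is to derive Theorem \ref{thm:example 2} as a direct application of Theorem \ref{thm: character rigid Kazhdan group} to $\Gamma = \mathrm{SL}_2(S^{-1}\mathcal{O}_k)$, whose center is $\{\pm I\}$. Two inputs are required: (i) property $\mathrm{T}(\mathrm{FD})$ for $\Gamma$, and (ii) the dichotomy that every character of $\Gamma$ is either finite-dimensional or vanishes outside $\mathrm{Z}(\Gamma)$. The hypothesis on units is equivalent by Dirichlet's unit theorem to $r_1(k)+r_2(k)+|S| \geq 2$, so $\Gamma$ embeds as an irreducible lattice in the locally compact group $H = \prod_{v \in V_\infty(k) \cup S} \mathrm{SL}_2(k_v)$, a product of at least two non-compact almost simple factors of real or $p$-adic rank one. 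This higher-rank product structure is the source of rigidity, despite no single factor having property (T).

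For property $\mathrm{T}(\mathrm{FD})$, I would combine the congruence subgroup property for $\mathrm{SL}_2$ over $S$-integer rings in our setting (Serre, Matsumoto, Mennicke; recall that CSP holds precisely when $r_1(k)+r_2(k)+|S| \geq 2$) with Selberg's uniform spectral gap for each factor of $H$. CSP guarantees that every finite-index subgroup of $\Gamma$ contains a principal congruence subgroup up to a finite congruence kernel, while Selberg's property produces a uniform spectral gap along the congruence tower. Since every finite-dimensional unitary representation of $\Gamma$ has virtually abelian image by Jordan's theorem, and hence factors through a finite quotient, these two facts together yield property $\mathrm{T}(\mathrm{FD})$ in the sense of \S\ref{sec:convergence of traces}.

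For the character dichotomy, I would invoke the character rigidity framework of \cite{bader2021charmenability} applied to the irreducible lattice $\Gamma \leq H$: every character of $\Gamma$ is either amenable or vanishes off a central subgroup. Since $H$ has no compact factors and $\Gamma$ is Zariski dense in each factor, an extremal amenable character must be finite-dimensional (its GNS factor is hyperfinite, hence its associated representation contains a finite-dimensional invariant subspace, and extremality then forces $\pi_\varphi$ itself to be finite-dimensional). With both hypotheses verified, Theorem \ref{thm: character rigid Kazhdan group} produces the claimed vanishing. The main obstacle I anticipate is the character dichotomy: the charmenability machinery has to be applied delicately in this rank-one-product setting, where the usual crutch of a simple factor with property (T) is unavailable, so one must carefully exclude the appearance of exotic infinite-dimensional extremal amenable characters of $\Gamma$. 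The property $\mathrm{T}(\mathrm{FD})$ step, by contrast, is a synthesis of well-established arithmetic inputs.
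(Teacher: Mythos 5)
Your overall strategy — reducing Theorem \ref{thm:example 2} to Theorem \ref{thm: character rigid Kazhdan group}, with property $\mathrm{T}(\mathrm{FD})$ supplied by the congruence subgroup property together with Selberg's property — matches the paper's proof. The problem lies in your argument for the character dichotomy, which is where the proof genuinely requires an input.

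The paper obtains the dichotomy (every character of $\mathrm{SL}_2(S^{-1}\mathcal{O}_k)$ is finite-dimensional or vanishes off the center) by citing Peterson's character rigidity theorem \cite[Theorem 2.6]{peterson2016character}, which is tailored precisely to $\mathrm{SL}_2$ over such $S$-integer rings and proves this statement directly. You instead try to derive it from the charmenability framework of \cite{bader2021charmenability} applied to $\Gamma$ as an irreducible lattice in the rank-one product $H = \prod_v \mathrm{SL}_2(k_v)$, and then argue that an extremal amenable character must be finite-dimensional because ``its GNS factor is hyperfinite, hence its associated representation contains a finite-dimensional invariant subspace.'' That inference is false: a hyperfinite von Neumann algebra need not be finite-dimensional. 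The hyperfinite $\mathrm{II}_1$ factor is the paradigmatic amenable factor, and being a $\mathrm{II}_1$ factor it has \emph{no} minimal projections, hence its GNS representation has no nonzero finite-dimensional invariant subspaces. (Concretely, the regular character of an amenable ICC group is extremal and amenable, yet its GNS representation is infinite-dimensional.) So the implication ``amenable $\Rightarrow$ finite-dimensional'' cannot be obtained this way; excluding infinite-dimensional amenable characters for $\mathrm{SL}_2(S^{-1}\mathcal{O}_k)$ is exactly the hard content of Peterson's theorem, and you have not replaced it. Note also that \cite{bader2021charmenability} is invoked in the paper only when some simple factor has property (T), which is absent in the product-of-rank-one setting; the paper's switch to \cite{peterson2016character} here is not a stylistic choice but a necessity.

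A secondary but worth-noting issue: your invocation of Jordan's theorem to show finite-dimensional unitary representations of $\Gamma$ have finite image does not work as stated. Jordan's theorem bounds the index of an abelian normal subgroup \emph{inside a finite} linear group; it gives nothing until the image is already known to be finite. The correct route is that finite-dimensional unitary representations of such $S$-arithmetic groups have finite image by superrigidity or by the Margulis normal subgroup theorem (which applies to irreducible lattices in products of rank-one groups), after which Serre's congruence subgroup theorem shows the kernel is a congruence subgroup. The paper implicitly uses this chain when it writes that all finite-dimensional characters come from finite quotients and all finite quotients are congruence quotients.
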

\begin{proof}
Every character of the  linear group $\Gamma$ is either finite-dimensional or vanishes outside the center   \cite[Theorem 2.6]{peterson2016character}. In fact, all finite-dimensional characters of the group $\Gamma$ come from  finite quotients, and  all finite quotients are  congruence quotients \cite{serre1970probleme}. Selberg's property \cite{clozel2003demonstration} implies that the group $\mathrm{SL}_2(S^{-1} \mathcal{O}_k)$ has property T(FD). As such Theorem   \ref{thm: character rigid Kazhdan group} applies.   
\end{proof}

Next, we deal with the case of congruence characters, i.e. characters factoring through a congruence subgroup. The following  justifies Example (3) on p. \pageref{example:congruence}.

\begin{thm}\label{thm:higher-rank-congruence-limit}
Let $G$ be a semisimple Lie group with $\mathrm{rank}_\mathbb{R}(G) \ge 2$ and finite center. Let  $\Gamma$ be an irreducible lattice in $G$ and  $\varphi_n \in \Ch{\Gamma}$ be any   sequence of pairwise distinct \emph{congruence} characters. Then $\varphi_n(\gamma)\to 0$ for all elements $\gamma\in \Gamma \setminus \mathrm{Z}(\Gamma)$. 
\end{thm}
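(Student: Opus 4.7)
The plan is to apply the general mechanism in Theorem \ref{thm:general-statement-for-vanishing-of-characters} with the normal subgroup $N = \mathrm{Z}(\Gamma)$ and with $\mathcal{R} = \mathcal{R}(\mathcal{C})$, the family of unitary representations of $\Gamma$ factoring through a congruence quotient. First I would invoke Margulis arithmeticity: an irreducible lattice $\Gamma$ in a higher-rank semisimple Lie group with finite center is arithmetic, so the family $\mathcal{C}$ of congruence subgroups is well-defined and is closed under finite intersections (since $\Gamma(I_1)\cap \Gamma(I_2) \supseteq \Gamma(I_1 \cap I_2)$). By the general principle noted in \S\ref{sec:convergence of traces} this makes $\mathcal{R}(\mathcal{C})$ hereditary.

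Next I would verify the two hypotheses of Theorem \ref{thm:general-statement-for-vanishing-of-characters}. The spectral gap hypothesis is precisely Selberg's property $\mathrm{T}(\mathcal{R}(\mathcal{C}))$, which is established for irreducible higher-rank arithmetic lattices in \cite{clozel2003demonstration} via the Burger--Sarnak restriction principle; crucially, this holds even when $G$ admits no simple factor with property (T), such as in products of rank-one factors. The given sequence $\varphi_n$ of congruence characters automatically lies in $\ChFam{\Gamma}{\mathcal{R}(\mathcal{C})}$, because if $\varphi_n$ factors through $\Gamma/\Gamma(I_n)$ then so does its GNS representation $\pi_{\varphi_n}$.

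For the character dichotomy I would appeal to the charmenability of higher-rank lattices established in \cite{bbhp2022charmenability,bader2021charmenability}: every character of $\Gamma$ is either finite-dimensional (hence amenable) or vanishes outside the center $\mathrm{Z}(\Gamma)$. With both hypotheses in hand, Theorem \ref{thm:general-statement-for-vanishing-of-characters} delivers the conclusion: any accumulation point of $\varphi_n$ in $\Tr{\Gamma}$ is a character of $\Gamma$ supported on $\mathrm{Z}(\Gamma)$, which is the desired pointwise vanishing.

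The hard part is conceptual rather than computational: it lies in recognizing that although each $\varphi_n$ is itself finite-dimensional (and thus amenable), an accumulation point of the sequence in $\Tr{\Gamma}$ may a priori be an infinite-dimensional trace on $\Gamma$. The combination of Selberg's property (which via Propositions \ref{prop:Ch L is closed} and \ref{prop:non-amenable-limit} forces such a limit to be a \emph{non-amenable} character) with charmenability (which forces any non-amenable character to vanish off $\mathrm{Z}(\Gamma)$) is exactly what bridges the finite-dimensional world of the $\varphi_n$ to the infinite-dimensional world of their potential limits. Once this bridge is set up, no further analysis is required beyond citing the two inputs.
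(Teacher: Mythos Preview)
Your proposal is correct and follows essentially the same route as the paper: invoke Margulis arithmeticity, verify that $\mathcal{R}(\mathcal{C})$ is hereditary, supply the spectral gap via Selberg's property \cite{clozel2003demonstration}, supply the character dichotomy via charmenability, and then apply Theorem~\ref{thm:general-statement-for-vanishing-of-characters} with $N=\mathrm{Z}(\Gamma)$. One small point of precision: the dichotomy you need here is ``amenable or vanishes outside the center'' as in \cite[Theorem~A]{bbhp2022charmenability}, which holds for all higher-rank irreducible lattices; the sharper ``finite-dimensional or vanishes outside the center'' from \cite{bader2021charmenability} is stated under the additional property~(T) factor hypothesis, which is not assumed in this theorem---but since you already observe that finite-dimensional implies amenable and you cite \cite{bbhp2022charmenability}, the argument goes through.
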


\begin{proof}
Let $\Gamma$ be an irreducible   lattice in a higher rank semisimple Lie group. The lattice $\Gamma$ is arithmetic by the Margulis arithmeticity theorem.  The arithmetic group $\Gamma$ has Selberg's property \cite{clozel2003demonstration}. In our terminology this is the same as property   $\mathrm{T}(\mathcal{R}(\mathcal{C}))$ where $\mathcal{C}$ is the family of congruence subgroups of the lattice $\Gamma$. Any non-amenable character of the group $\Gamma$ vanishes outside of its center $\mathrm{Z}(\Gamma)$  by  \cite[Theorem A]{bbhp2022charmenability}. The desired vanishing result follows as a special case of the general Theorem \ref{thm:general-statement-for-vanishing-of-characters} with respect to the family $\mathcal{R} = \mathcal{R}(\mathcal{C})$ and where  the normal subgroup $N$ is taken to be the center $\mathrm{Z}(\Gamma)$ of the group $\Gamma$.
\end{proof}

\subsection*{Generalization to local fields}

Theorem \ref{thm intro:higher rank lattice vanishing of characters} can be generalized to $S$-arithmetic groups over local fields.  We now provide some additional details.

Let $K$ be a global field with ring of integers $\mathcal{O}$. Denote by $V^K$ denote the set of all equivalence classes of valuations on the field $K$ up to equivalence and by   $V^K_\infty \subset V^K$   the subset of Archimedean valuations.
Let   $S \subset V^K $ be a finite    set of valuations with $V^K_\infty \subset S$. The corresponding localization of the ring of integers $\mathcal{O}$ is
\begin{equation}
\label{eq:localization}
\mathcal{O}_S = \{ x \in K \: : \: |x|_s \le 1 \quad \forall s \in V^K \setminus S \}.
\end{equation}

Let $\bG$ be a connected  almost $K$-simple  algebraic $K$-group. Fix an injective $K$-representation $\rho : \bG \to \mathrm{GL}_N$ for some $N \in \mathbb{N}$. Denote 
\begin{equation}
\Gamma_S = \rho^{-1}(\mathrm{GL}_N(\mathcal{O}_S)).
\end{equation}
A subgroup $\Gamma$ of $\bG(K)$ is called \emph{$S$-arithmetic} if it is commensurable with $\Gamma_S$.  This notion is independent of the choice of the representation $\rho$. 
The $S$-arithmetic subgroup $\Gamma$ is said to be of 
 \begin{itemize}
     \item \emph{compact type} if $\rho(\Gamma_S)$ is compact in $\bG(K_v)$ for every absolute value $v$ on $K$,
     \item \emph{simple type} if there exists a unique absolute value $v$ on $K$ such that $\rho(\Gamma_S)$ is unbounded, and
     \item \emph{product type} otherwise.
 \end{itemize}
 The $S$-arithmetic subgroup $\Gamma$ is called \emph{higher rank} if $\Gamma$ has simple type and $\mathrm{rank}_{K_v} (\bG) \ge 2$ or if $\Gamma$ has product type.

\begin{thm} 
\label{thm:generalization to local fields}
Let $\Gamma$ be a higher rank $S$-arithmetic subgroup. Assume that the group $\bG(K_v)$ is non-compact and has property $(T)$  for some  absolute value $v$ on $K$.
Let  $\varphi_n\in \Ch{\Gamma}$ be any sequence of pairwise distinct characters. Then $\varphi_n(\gamma)\to 0$ for all $\gamma\in \Gamma\setminus \mathrm{Z}(\Gamma)$. 
\end{thm}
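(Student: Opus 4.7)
The plan is to verify the two hypotheses of Theorem \ref{thm: character rigid Kazhdan group} for the $S$-arithmetic group $\Gamma$ and then to invoke it directly. One must check that $\Gamma$ has property $\mathrm{T}(\mathrm{FD})$ and that every character of $\Gamma$ is either finite-dimensional or vanishes outside the center $\mathrm{Z}(\Gamma)$. The overall shape of the argument mirrors the proof of Theorem \ref{thm intro:higher rank lattice vanishing of characters}, with each of the two ingredients replaced by its $S$-arithmetic counterpart.

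For property $\mathrm{T}(\mathrm{FD})$, I would split according to the type of $\Gamma$. If $\Gamma$ is of simple type then, after quotienting out the compact factors, $\Gamma$ embeds as a lattice in $\bG(K_v)$, which by the higher rank hypothesis satisfies $\mathrm{rank}_{K_v}(\bG) \ge 2$ and by assumption has property (T); thus $\Gamma$ inherits full property (T) and in particular property $\mathrm{T}(\mathrm{FD})$. If $\Gamma$ is of product type then $\Gamma$ embeds as an irreducible lattice in $\prod_{s \in S} \bG(K_s)$ after discarding compact factors, and one invokes the $S$-arithmetic extension of the Lubotzky--Zimmer argument \cite{lubotzky1989variants}: any finite-dimensional unitary representation of $\Gamma$ has, by Margulis super-rigidity, essentially algebraic image, so the spectral gap comes from the property (T) factor $\bG(K_v)$ combined with the usual reduction of finite-dimensional almost-invariant vectors to invariant ones via the compactness of the image.

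For the character dichotomy, I would appeal to the charmenability framework of \cite{bbhp2022charmenability} (compare \cite{bader2021charmenability}), which in this higher rank $S$-arithmetic generality shows that every non-amenable character of $\Gamma$ vanishes outside the amenable radical; the latter coincides with $\mathrm{Z}(\Gamma)$ by the $S$-arithmetic version of the Margulis normal subgroup theorem. Any remaining amenable character must then have its associated von Neumann factor injective, and combined with the fact that a faithful amenable trace on $\Gamma/\mathrm{Z}(\Gamma)$ would yield an amenable quotient --- contradicting the normal subgroup theorem --- one concludes that such a character is finite-dimensional.

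With both hypotheses in hand, Theorem \ref{thm: character rigid Kazhdan group} applies verbatim to give $\varphi_n(\gamma) \to 0$ for all $\gamma \in \Gamma \setminus \mathrm{Z}(\Gamma)$. I expect the main obstacle to be a clean, reference-grade statement of property $\mathrm{T}(\mathrm{FD})$ in the product type $S$-arithmetic regime, since the Lubotzky--Zimmer result was originally phrased for lattices in real Lie groups; the charmenability input, by contrast, is already set up in the required generality.
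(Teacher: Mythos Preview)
Your overall strategy matches the paper's: establish property $\mathrm{T}(\mathrm{FD})$, establish the character dichotomy, and invoke Theorem~\ref{thm: character rigid Kazhdan group} (the paper phrases this as invoking the more general Theorem~\ref{thm:general-statement-for-vanishing-of-characters}, which amounts to the same thing). Two points deserve correction.

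For $\mathrm{T}(\mathrm{FD})$ in the product type case, the paper cites \cite[Corollary~2.6.(i)]{lubotzky1989variants} directly rather than re-running a superrigidity argument, but it also records an ingredient you omit: one needs that each non-compact factor $\bG(K_v)$ is \emph{minimally almost periodic} (has no non-trivial finite-dimensional unitary representations), citing \cite{bader2017equicontinuous}. This is what makes the Lubotzky--Zimmer mechanism apply over general local fields, and it is precisely the clean reference you were worried might be missing.

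More substantively, your argument that every amenable character is finite-dimensional is not correct as written. The claim that ``a faithful amenable trace on $\Gamma/\mathrm{Z}(\Gamma)$ would yield an amenable quotient'' conflates Bekka-amenability of the GNS representation (injectivity of $M_\varphi$) with amenability of the image group; these are different notions, and the normal subgroup theorem gives no traction here. In the simple type case your conclusion is salvageable for a different reason: $\Gamma$ has full property~(T), so $1 \prec \pi_\varphi \otimes \pi_\varphi^*$ forces $1 \le \pi_\varphi \otimes \pi_\varphi^*$, whence $\pi_\varphi$ has a finite-dimensional subrepresentation and $\varphi$ is finite-dimensional. In the product type case only $\mathrm{T}(\mathrm{FD})$ is available and this shortcut fails. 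The paper avoids the issue entirely: \cite[Theorem~B]{bader2021charmenability} already delivers the \emph{strong} dichotomy (every character is finite-dimensional or vanishes outside $\mathrm{Z}(\Gamma)$) in the full higher-rank $S$-arithmetic generality, so no separate ``amenable $\Rightarrow$ finite-dimensional'' step is required. Cite that result directly rather than attempting to reprove it.
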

 
In particular, if the $S$-arithmetic group $\Gamma$  is center-free  then the conclusion of Theorem \ref{thm:generalization to local fields} says that  $\varphi_n\to \delta_e$ in the pointwise convergence topology.

\begin{proof}[Proof of Theorem \ref{thm:generalization to local fields}]
If $\Gamma$ has simple type then it has Kazhdan's property (T) \cite[Theorem 1.6.1]{bekka2008kazhdan}. If $\Gamma$ has product type then the assumption that $\bG(K_v)$ has property (T) for some absolute value $v$ on the global field $K$ guarantees that $\Gamma$ has   property $\mathrm{T}(\mathrm{FD})$ by \cite[Corollary 2.6.(i)]{lubotzky1989variants}. To be able to apply   \cite{lubotzky1989variants} we also need to use the fact that any non-compact group of the form $\bG(K_v)$  is \emph{minimally almost periodic}, i.e.   admits no non-trivial finite dimensional unitary representations  \cite{bader2017equicontinuous}.
Higher rank $S$-arithmetic groups have the character-dichotomy property by \cite[Theorem B]{bader2021charmenability}. 
At this point the desired conclusion  follows from our general  Theorem \ref{thm:general-statement-for-vanishing-of-characters}.
\end{proof}

This theorem has been mentioned in Example (1) on p. \ref{example:global fields} following Theorem \ref{thm intro:higher rank lattice vanishing of characters}.

\subsection*{Finite groups of Lie type}\label{sec:finite groups}

Let $\mathfrak{g}$ be a complex simple\footnote{This entire subsection makes  sense for semisimple rather than simple Lie algebras. We restrict attention to the simple case since it suffices for our purposes.} Lie algebra. The Lie algebra $\mathfrak{g}$ admits a   \emph{Chevalley basis} $B$. The basis $B$ is unique up to automorphisms of the   Lie algebra $\mathfrak{g}$ and sign changes \cite{steinberg1967lectures}. Let $\mathfrak{g}_
\mathbb{Z}$ be the $\mathbb{Z}$-span of the Chevalley basis $B$ so that $\mathfrak{g}_
\mathbb{Z}$ is a lattice in the complex vector space $\mathfrak{g}$.

Let $K$ be a field.  The \emph{Chevalley group} $G(K)$ associated to the Lie algebra $\mathfrak{g}$   is a connected $K$-simple adjoint\footnote{The general construction of   a Chevalley group depends on a choice of a lattice  containing the root lattice and contained in the lattice of weights. For our purposes it will suffice to consider the root lattice, which gives rise to adjoint Chevalley groups.} $K$-subgroup of the group $\mathrm{GL}(\mathfrak{g}_\mathbb{Z} \otimes K) \cong \mathrm{GL}_N(K)$ where $N = \dim_\mathbb{C}(\mathfrak{g})$.  The algebraic $K$-group $G(K)$ is $K$-split   and its   $K$-rank   is equal to the rank of the complex simple Lie algebra $\mathfrak{g}$.

Assume from now on that $K$ is a global field. Consider the localization  $\mathcal{O}_S$     of its ring of integers $\mathcal{O}$ corresponding to some finite subset $S$ of non-Archimedean valuations as in Equation \ref{eq:localization}. Regard the Chevalley group $G(K)$ equipped with the rational representation defined via the Chevalley basis $B$. In particular the subgroup $G(\mathcal{O}_S)$ consisting of the $\mathcal{O}_S$-rational points of the group $G(K)$ is the stabilizer of the submodule $\mathfrak{g}_\mathbb{Z} \otimes \mathcal{O}_S$. The group $G(\mathcal{O}_S)$ is $S$-arithmetic by definition.


Consider any non-zero ideal $\mathcal{I} \le \mathcal{O}_S$ such that $F = \mathcal{O}_S/\mathcal{I}$ is a finite ring (or  field). Let $G(F)$ be the Chevalley group over the ring $F$. Since the ring $F$ is finite the Chevalley group $G(F)$ is known to coincide   with its elementary subgroup   generated by the root subgroups \cite[Lemma III.1.4]{weibel2013k}. This fact implies  that the natural homomorphism $f_\mathcal{I} : G(\mathcal{O}_S) \to G(F)$ is surjective \cite[Remark III.1.2.3]{weibel2013k}. 

We utilize the discussion in the preceding paragraphs to obtain character limit results for finite simple groups of Lie type.

\begin{proof}[Proof of Corollary \ref{cor intro: character limits}]
Let $G$ be a Chevalley group associated to some complex simple Lie algebra of rank two or higher.
There are two families of fields to consider.

In the case where the global field $K$ is a finite extension of $\Q$ we consider the local field $k=K_\infty$ which is either $\R$ or $\C$. 

In the case where $K$ is a finite extension of the field of rational function over a finite field $\mathbb F _q$, we may  identify (up to an isomorphism) the ring $\mathcal{O}_K$  with the localization $\mathcal{O}_S = \mathbb{F}_q\left[t^{-1}\right]$ of the ring of algebraic integers $\mathcal{O}_K$ associated to the subset of valuations  $S = \{ t \}$. We consider the local field  $k = K_t = \mathbb{F}_p\left(\left(t\right)\right)  $.

In either case the $k$-analytic group 
   $G(k)$   has $k$-rank at least two. In particular the group $G(k)$ is non-compact and has property $(T)$. In particular,  Theorem  \ref{thm:generalization to local fields} applies to the lattice $G(\mathcal{O}_K)$.

To conclude the proof, consider an arbitrary enumeration $\varphi_n$ of all characters  of the group $G(\mathcal{O}_K)$ which factorize through some finite quotient of the form $G(\mathcal{O}_K/I)$ where  $I$ is an ideal in $\mathcal{O}_K$. According to Theorem  \ref{thm:generalization to local fields}, for  any $\varepsilon>0$  there is some sufficiently large $N>0$ such that $|\varphi_n(\gamma)|<\varepsilon$ for any non-central element $\gamma\in G(\mathcal{O}_K)$. The desired statement follows by excluding finitely many ideals $I \lhd \mathcal{O}_K$.
\end{proof}

\subsection*{Generalization to non-semisimple groups}
Any discrete group $\Gamma$ admits a maximal amenable normal subgroup $\mathrm{Rad}(\Gamma)$ called the \emph{amenable radical}.
The following statement generalizes Theorem \ref{thm intro:higher rank lattice vanishing of characters} to the non-semisimple setting; see Example (4) on page \pageref{example:non semisimple}.

\begin{thm}\label{thm:general arithmetic}
Let $\bG$ be a connected $\Q$-algebraic group with solvable radical $\bR$. Assume that the semisimple group $\bG/\bR$ has at most one $\Q$-simple $\R$-anistorpic factor and no $\R$-simple factors of rank one. Let $ \Gamma \leq \bG(\Q)$ be an arithmetic subgroup with property (T). Let $\varphi_n \in \Ch{\Gamma}$ be any sequence of pairwise distinct characters. Then $\varphi_n(g)\to 0$ for any element $\gamma\in \Gamma \backslash \mathrm{Rad}(\Gamma)$.
\end{thm}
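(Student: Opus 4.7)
The plan is to apply the general Theorem \ref{thm:general-statement-for-vanishing-of-characters} to the hereditary family $\mathcal{R} = \mathrm{Rep}_{\aleph_0}(\Gamma)$ of all separable unitary representations of $\Gamma$ and with respect to the normal subgroup $N = \Rad{\Gamma}$. The spectral gap hypothesis is immediate: $\Gamma$ has Kazhdan's property (T) by assumption, which is precisely property $\mathrm{T}(\mathrm{Rep}_{\aleph_0}(\Gamma))$, so every $\varphi \in \Ch{\Gamma}$ satisfies $\pi_\varphi \in \mathcal{R}$ trivially.

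The nontrivial content lies in verifying the character dichotomy: every $\varphi \in \Ch{\Gamma}$ is either amenable or satisfies $\varphi(\gamma) = 0$ for all $\gamma \in \Gamma \setminus \Rad{\Gamma}$. Consider the short exact sequence
\[
1 \to \bR_\Gamma \to \Gamma \to \bar\Gamma \to 1,
\]
where $\bR_\Gamma := \Gamma \cap \bR(\Q)$ is a solvable, hence amenable, normal subgroup of $\Gamma$ automatically contained in $\Rad{\Gamma}$, and $\bar\Gamma := \Gamma/\bR_\Gamma$ is an arithmetic subgroup of $(\bG/\bR)(\Q)$. The hypotheses on $\bG/\bR$ are exactly those required to invoke the charmenability results of \cite{bader2021charmenability,bbhp2022charmenability}, yielding that every character of $\bar\Gamma$ is either amenable or vanishes off $\mathrm{Z}(\bar\Gamma)$. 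The preimage of $\mathrm{Z}(\bar\Gamma)$ in $\Gamma$ is a virtually central extension of $\bR_\Gamma$, and thus sits inside $\Rad{\Gamma}$.

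Given a non-amenable character $\varphi \in \Ch{\Gamma}$, the lifting step proceeds by a Mackey-type decomposition of the restriction $\pi_\varphi|_{\bR_\Gamma}$ as a direct integral over the Pontryagin dual $\widehat{\bR_\Gamma}$. Non-amenability of $\varphi$, combined with the fact that typical $\Gamma$-stabilizers on non-trivial characters of $\bR_\Gamma$ are amenable, forces this direct integral to concentrate on $\Gamma$-fixed characters of $\bR_\Gamma$. Consequently $\varphi$ descends to a non-amenable character of $\bar\Gamma$, and the dichotomy for $\bar\Gamma$ places its support inside $\mathrm{Z}(\bar\Gamma)$. Tracing back through the quotient, the support of $\varphi$ in $\Gamma$ lies within $\Rad{\Gamma}$, as required.

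The main obstacle is the lifting step: it requires a careful analysis establishing that a non-amenable character cannot have mass on non-fixed orbits of $\Gamma$ acting on $\widehat{\bR_\Gamma}$. Concretely, one needs to rule out that a non-atomic $\Gamma$-quasi-invariant measure on $\widehat{\bR_\Gamma}$ supports a non-amenable character, by exhibiting amenable stabilizers on a co-null set. In illustrative cases such as $\mathrm{GL}_d(\Z) \ltimes \Z^d$ with $d \ge 3$ from Example (4), this reduces to the structural fact that the stabilizer in $\mathrm{GL}_d(\Z)$ of a generic character of $\Z^d$ is virtually solvable.
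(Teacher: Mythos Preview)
Your overall framework matches the paper exactly: apply Theorem \ref{thm:general-statement-for-vanishing-of-characters} with $\mathcal{R}=\mathrm{Rep}_{\aleph_0}(\Gamma)$ and $N=\Rad{\Gamma}$, with property (T) supplying the spectral gap. The divergence is in how the character dichotomy is obtained. The paper does not reduce to the semisimple quotient; it invokes \cite[Theorem~A]{bader-iti2022charmenability} directly, which already states that for an arithmetic subgroup $\Gamma\le\bG(\Q)$ satisfying the hypotheses on $\bG/\bR$, every character of $\Gamma$ is either finite-dimensional (hence amenable) or vanishes outside $\Rad{\Gamma}$. That single citation replaces your entire lifting argument.

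Your attempt to recover this dichotomy by hand has genuine gaps. First, $\bR_\Gamma=\Gamma\cap\bR(\Q)$ is solvable but in general not abelian, so speaking of the Pontryagin dual $\widehat{\bR_\Gamma}$ and a direct-integral decomposition over it is not legitimate; one would need a Mackey analysis over the space of characters of a solvable group, which is considerably more delicate. Second, even granting an appropriate decomposition, the assertion that ``typical $\Gamma$-stabilizers on non-trivial characters of $\bR_\Gamma$ are amenable'' is unsupported in the stated generality: it is plausible in toy cases like $\mathrm{GL}_d(\Z)\ltimes\Z^d$, but for an arbitrary connected $\Q$-group with solvable radical there is no reason the stabilizer of a character of $\bR_\Gamma$ should avoid the semisimple part. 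Establishing the dichotomy at the level of $\Gamma$ (rather than $\bar\Gamma$) is precisely the content of \cite{bader-iti2022charmenability}, and your sketch does not substitute for it. The fix is simply to cite that theorem, as the paper does.
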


\begin{proof}
The crucial ingredient here is  \cite[Theorem A]{bader-iti2022charmenability}. Taking into account  the property (T) assumption, it says that any character of the arithmetic group $\Gamma$ is either finite-dimensional or vanishes outside   the amenable radical   $\Rad{\Gamma}$. We conclude  by relying on our   general Theorem \ref{thm:general-statement-for-vanishing-of-characters}, taking $\mathcal{R}$ to be the set of equivalence classes of all separable unitary representations and letting $N=\Rad{\Gamma}$.
\end{proof}

\subsection*{Convergence of representations}

We prove the  results stated in the introduction concerning convergence of representations in the Fell topology. 

\begin{defn*}
    Let $\pi_1$ and $\pi_2$ be two unitary representations of a group $\Gamma$.
    \begin{itemize}
        \item $\pi_1$ is \emph{subordinate} to $\pi_2$ if every non-trivial subrepresentation of $\pi_1 $ contains a non-trivial subrepresentation equivalent to a subrepresentation of $\pi_2$.
        \item $\pi_1$ and $\pi_2$ are \emph{quasi-equivalent} if each one is subordinate to the other.
        \item Quasi-equivalence is denoted $\pi_1 \approx \pi_2$.
    \end{itemize} 
\end{defn*}

Note that a finite-dimensional unitary representation $\pi_1$ is subordinate to a unitary representation $\pi_2$ if every irreducible subrepresentation of $\pi_1$ is equivalent to  a subrepresentation of $\pi_2$.

\begin{proof}[Proof of Theorem \ref{thm intro: convergence of reps}]
Let $\Gamma$ be a center-free irreducible lattice as in Theorem \ref{thm intro: convergence of reps}. As was already mentioned above, the lattice 
$\Gamma$ has property $\mathrm{T}(\mathrm{FD})$ by \cite{lubotzky1989variants} and  every character of $\Gamma$ other than $\delta_{\{e\}}$ is   finite-dimensional by \cite{bader2021charmenability}.

Consider a sequence $\pi_n$ of  finite-dimensional unitary representations of the group $\Gamma$.
Assume that the representations $\pi_n$ are pairwise non quasi-equivalent. Note that for every finite-dimensional unitary representation $\pi$, there are only finitely many quasi-equivalence classes of finite-dimensional unitary representations subordinate to $\pi$.
Therefore, up to passing to a subsequence, we may find for each $n \in \mathbb{N}$  an \emph{irreducible} subrepresentation $\sigma_n \le  \pi_n$ such  that the representations $\sigma_n$ are pairwise non-equivalent.
It follows that the characters $\varphi_n = \frac{1}{\dim \sigma_n} \mathrm{tr} \circ \sigma_n$  are pairwise distinct.  At this point we invoke Theorem \ref{thm: character rigid Kazhdan group}.
It follows that the sequence of characters  $\varphi_n$ converges to the Dirac trace $\delta_e$ pointwise. Note that the GNS construction associated to the trace  $\delta_e$ is the regular representation $\lambda_\Gamma$. 
Recall that the GNS construction depends continuously on the trace, see 
 Lemma \ref{lem:taking character to its GNS is Fell continuous}. Hence the sequence of unitary representations $\sigma_n$, and a fortiori also the sequence of representations $\pi_n$, converges to the regular representation $\lambda_\Gamma$ in the Fell topology. 
\end{proof}

\begin{proof}[Proof of Corollary \ref{cor:for descending stuff}] 
Let $\Gamma$ be a center-free  lattice as in Theorem \ref{thm intro: convergence of reps}. We will prove that the sequence of  quasi-regular representations $L^2(\Gamma/\Gamma_n)$ converges to the regular representation $\lambda_\Gamma$ in the Fell topology by showing that every subsequence contains a further subsequence with  limit $\lambda_\Gamma$. 
Let $n_k$ be an arbitrary subsequence. We claim that the sequence of unitary representations $L^2(\Gamma/\Gamma_{n_k})$ contains a further subsequence consisting of pairwise non quasi-equivalent representations. If this is not the case then $\bigoplus_k L^2(\Gamma/\Gamma_{n_k})$ is quasi-equivalent to some finite-dimensional representation $\rho$ with $  \ker \rho = N$ satisfying $\left[\Gamma:N \right] < \infty$. Quasi-equivalent representations have the same kernel \cite[Proposition 6.A.4]{bekka2020unitary}. In particular  $N \le \Gamma_{n_k}$ for all $k \in \mathbb{N}$. This  leads to a contradiction to the fact the sequence $\Gamma_{n_k}$ is strictly decreasing. 
Having established the claim, the convergence of  this further  subsequence  to the   representation $\lambda_\Gamma$  follows from Theorem \ref{thm intro: convergence of reps}.
\end{proof}


We also identify all possible limits of sequences of finite-dimensional unitary representations of lattices.

  \begin{prop}
\label{prop:limits of finite dim}
 Let $\Gamma$ be a center-free irreducible lattice as in Theorem \ref{thm intro: convergence of reps}. Let $\pi_n$ be a sequence of pairwise disjoint finite-dimensional unitary representations of  $\Gamma$. Then any limit  of the sequence $\pi_n$ in the Fell topology weakly contains the regular representation $\lambda_\Gamma$.
\end{prop}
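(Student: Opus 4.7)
The plan is to argue by contradiction. Fix any Fell limit $\pi$ of the sequence $\pi_n$ and suppose $\lambda_\Gamma \not\prec \pi$. I will derive a contradiction by exhibiting a nonzero finite-dimensional irreducible representation which embeds as a subrepresentation of infinitely many of the $\pi_n$, in violation of pairwise disjointness.

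The first step is to extract such a representation from $\pi$. Leveraging the character classification of \cite{bader2021charmenability} together with the Kazhdan property (T) of $\Gamma$, one expects a representation-level dichotomy: any unitary representation of $\Gamma$ either weakly contains the regular representation $\lambda_\Gamma$ or admits a nonzero finite-dimensional subrepresentation. Applied to $\pi$, the contradiction hypothesis then yields a finite-dimensional irreducible subrepresentation $\tau \le \pi$.

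The second step is to push $\tau$ back into the $\pi_n$. Fell convergence $\pi_n \to \pi$ gives $\pi \prec \bigoplus_{n \ge N} \pi_n$ for every $N \ge 1$, since each diagonal matrix coefficient of $\pi$ is approximable by sums of diagonal matrix coefficients of $\pi_n$ with $n \ge N$. Consequently $\tau \prec \bigoplus_{n \ge N} \pi_n$. Tensoring with the contragredient representation $\tau^*$ and using that the trivial representation $1$ is contained in $\tau \otimes \tau^*$ (which holds since $\tau$ is finite-dimensional), we conclude $1 \prec \bigoplus_{n \ge N}(\pi_n \otimes \tau^*)$. Property (T) of $\Gamma$ now upgrades weak containment of the trivial representation to honest containment, so $\pi_n \otimes \tau^*$ admits a nonzero invariant vector for some $n \ge N$; equivalently, $\tau$ embeds as a subrepresentation of $\pi_n$. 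Since $N$ was arbitrary, the representation $\tau$ is a subrepresentation of $\pi_n$ for infinitely many $n$, contradicting pairwise disjointness.

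The main obstacle will lie in the first step: lifting the character-level dichotomy from \cite{bader2021charmenability} to a representation-level dichotomy is not purely formal, and likely requires passing through an appropriate normal state on the von Neumann algebra generated by $\pi$ together with the structural results on characters proven in that paper. Once this dichotomy is in hand, the remainder is a short tensoring argument using the very same principle underlying property (T).
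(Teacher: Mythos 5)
Your proposal has the right overall shape and shares the key input with the paper's proof, namely the dichotomy ``weakly mixing $\Rightarrow$ weakly contains $\lambda_\Gamma$,'' which the paper obtains by citing \cite[Theorem~B]{bader2021charmenability} together with \cite[Proposition~3.6]{bbhp2022charmenability}. You are right to flag this as the part that needs a precise reference; the combination of those two results supplies exactly the representation-level statement you want.

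The gap lies elsewhere: you invoke \emph{Kazhdan's property~(T) for $\Gamma$}, but in the setting of Theorem~\ref{thm intro: convergence of reps} the lattice $\Gamma$ need not be Kazhdan. The hypothesis is only that \emph{some} non-compact simple factor of $G$ has~(T); for example an irreducible lattice in $\mathrm{SL}_2(\mathbb{R})\times\mathrm{SL}_3(\mathbb{R})$ satisfies the hypotheses but does not have property~(T). What $\Gamma$ does have, by \cite{lubotzky1989variants}, is property $\mathrm{T}(\mathrm{FD})$. Your tensoring step can be salvaged with $\mathrm{T}(\mathrm{FD})$: each $\pi_n\otimes\tau^*$ is finite-dimensional, and a Kazhdan pair $(F,\varepsilon)$ for $\mathrm{T}(\mathrm{FD})$ controls all finite-dimensional representations \emph{uniformly}, so an $(F,\varepsilon')$-invariant unit vector in $\bigoplus_{n\ge N}(\pi_n\otimes\tau^*)$ (with $\varepsilon'$ suitably small) forces one of the summands to carry a vector that is close enough to invariant to have a genuine invariant vector. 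This pigeonhole argument is not automatic from the definition, and you should spell it out if you take this route. Once corrected, your step of pushing the finite-dimensional irreducible $\tau$ back into some $\pi_n$ is a sound alternative to the paper's argument, which instead derives the same contradiction from the isolation statement of Lemma~\ref{lem:finite dim rep is isolated}: there $\rho\prec\bigoplus_k\sigma_k$ for irreducible $\sigma_k$ yields that $\rho$ lies in the Fell closure of $\{\sigma_k\}$, and property $\mathrm{T}(\mathrm{FD})$ forces $\rho=\sigma_k$ for some~$k$. Both routes ultimately rest on $\mathrm{T}(\mathrm{FD})$, not full Kazhdan property~(T), and your write-up should make that explicit.
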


\begin{proof}
Let $\pi$ be any limit point of the sequence of representations $\pi_n$ in the Fell topology. We claim that the representation $\pi$ is \emph{weakly mixing}, i.e. does not contain any finite-dimensional subrepresentations. 
 Assume by contradiction that $\pi$ does contain some  irreducible finite-dimensional subrepresentation $\rho$. Up to passing to a subsequence, we may assume that $\rho \not\le \pi_n$ for all $n \in \mathbb{N}$. Let $\sigma_k$ be some enumeration  of all finite-dimensional irreducible subrepresentations contained in some representation $\pi_n$. Then 
 \begin{equation}
 \rho<\pi \prec \bigoplus_n \pi_n \approx \bigoplus_k \sigma_k
 \end{equation}
In particular, the irreducible finite-dimensional representations $\sigma_k$   converge to the representation $\rho$ in the Fell topology.  The lattice $\Gamma$ has property T(FD) by \cite{lubotzky1989variants}. This is a contradiction to Lemma \ref{lem:finite dim rep is isolated}. As the representation $\pi$ is weakly mixing, it follows from \cite[Theorem B]{bader2021charmenability} and from \cite[Proposition 3.6]{bbhp2022charmenability} that $\pi$ weakly contains the regular representation $\lambda_\Gamma$. 
\end{proof}


We remark that the above theorems concerning the convergence of representations naturally extend to semisimple algebraic groups defined over arbitrary local fields and their $S$-arithmetic subgroups; see the discussion in  Theorem \ref{thm:generalization to local fields}.

\section{Relative traces and characters}\label{sec:rel-traces}

Let $\Gamma$ be a countable discrete group. Let  $\Lambda$ be another countable discrete group admitting an action by automorphisms $\theta:\Lambda\to \Aut{\Gamma}$  on the group $\Gamma$.  This induces an action of the group $\Lambda$ on the set of traces $\Tr{\Gamma}$ by precomposition.   The \emph{relative traces} of the group $\Gamma$ are given by
\begin{equation}
\relTr{\Lambda}{\Gamma} = \{ \varphi \in \Tr{\Gamma} \: : \: \varphi^\lambda = \varphi \quad \forall \lambda \in \Lambda \}.   
\end{equation}
The set of relative traces $\relTr{\Lambda}{\Gamma}$ is a simplex \cite[Proposition 2.2]{bader-iti2022charmenability}.
Its   extreme points are called \emph{relative characters} and are denoted by $\relCh{\Lambda}{\Gamma}$. 

 The  set $\relTr{\Lambda}{\Gamma}$ stands in a bijective correspondence with the set of all $\Lambda$-invariant Borel probability measures on $\Ch{\Gamma}$ via the barycenter map. The relative characters $\relCh{\Lambda}{\Gamma}$ correspond to ergodic  $\Lambda$-invariant measures on the space $\Ch{\Gamma}$.

Consider the situation where the group $\Lambda$ has Kazhdan's property (T) and $\Gamma$ is a normal subgroup of $\Lambda$. The group $\Lambda$ acts on its subgroup $\Gamma$ by conjugation. If  $\Tr{\Gamma}$ happens to be a Bauer simplex so that the space $\Ch{\Gamma}$ is compact then it is an immediate consequence of the above discussion, together  with the   theorem of Glasner and Weiss \cite{glasner1997kazhdan}, that the space of relative traces $\relTr{\Lambda}{\Gamma}$ is also a Bauer simplex.  For example, this will be the case if the group $\Gamma$ is abelian or has Kazhdan's property (T) in its own right.

The main point of the remainder of this section will be to obtain a similar conclusion assuming that the group $\Lambda$ has property (T) but without  assuming that the simplex $\Tr{\Gamma}$ is Bauer.



\subsection*{Relative traces and the GNS data}
Consider   a relative trace $\varphi \in \relTr{\Lambda}{\Gamma}$ with corresponding GNS data ($\pi_\varphi,\rho_\varphi,\mathcal{H}_\varphi,v_\varphi)$. There is a unique unitary representation $U_\varphi : \Lambda \to \mathrm{U}(\mathcal{H}_\varphi)$ given by $U_\varphi : \lambda \mapsto U_\varphi (\lambda)$ which satisfies
\begin{equation}
    U_\varphi (\lambda) \pi_\varphi (\gamma) U_\varphi (\lambda)^{-1} = \pi_\varphi(\gamma^\lambda) \quad \text{and} \quad U_\varphi (\lambda) v_\varphi = v_\varphi \quad \forall \lambda \in \Lambda, \gamma \in \Gamma.
\end{equation}
 Note that $U_\varphi (\lambda) = \pi_\varphi(\gamma) $ provided that $\theta(\lambda)=\mathrm{Inn}(\gamma)$, namely the element $\lambda \in \Lambda$ acts on the group $\Gamma$ via the inner automorphism corresponding to the element $\gamma \in \Gamma$. For all this see \cite[Proposition 2.7]{bekka2020characters}.
 
Let $N_\varphi$ denote the von Neumann algebra generated by the operators $\pi_\varphi(\Gamma)$ as well as   the $U_\varphi (\lambda)$'s for all elements $\lambda \in \Lambda$, namely
\begin{equation}
N_\varphi = \left( \pi_\varphi(\Gamma) \; \cup \; U_\varphi (\Lambda) \right)''.
\end{equation}
Then $N_\varphi' = \mathbb{C} \cdot \mathrm{Id}$ if and only if $\varphi \in \relCh{\Lambda}{\Gamma}$ \cite[Proposition 2.8]{bekka2020characters}.

\begin{prop}
\label{prop:understanding the center} Assume that $\mathrm{Inn}(\Gamma) \le \theta(\Lambda)$. Then   any  relative trace $\varphi \in \relTr{\Lambda}{\Gamma}$ satisfies
\begin{equation}
N'_\varphi =  \mathrm{Z}(M_\varphi) \cap U_\varphi (\Lambda)'.
\end{equation}
\end{prop}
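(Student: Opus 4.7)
The plan is to compute the commutant $N_\varphi'$ directly by leveraging the hypothesis $\mathrm{Inn}(\Gamma) \le \theta(\Lambda)$ to absorb the representation $\rho_\varphi$ into $N_\varphi$; once this is done, the result follows from the standard tracial GNS identification of the commutant of $M_\varphi$.

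The inclusion $\supseteq$ is immediate and does not require the hypothesis: since $\mathrm{Z}(M_\varphi) = M_\varphi \cap M_\varphi' \subseteq \pi_\varphi(\Gamma)'$, any element of $\mathrm{Z}(M_\varphi) \cap U_\varphi(\Lambda)'$ commutes with every generator of $N_\varphi$ and hence lies in $N_\varphi'$.

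For the main inclusion $\subseteq$, I would first verify that the conjugation unitaries $c_\varphi(\gamma) = \pi_\varphi(\gamma)\rho_\varphi(\gamma)$ lie in $U_\varphi(\Lambda)$ for every $\gamma \in \Gamma$. Fixing $\lambda_\gamma \in \Lambda$ with $\theta(\lambda_\gamma) = \mathrm{Inn}(\gamma)$, one checks that $c_\varphi(\gamma)$ satisfies both defining properties of $U_\varphi(\lambda_\gamma)$: the intertwining identity $c_\varphi(\gamma)\pi_\varphi(\eta)c_\varphi(\gamma)^{-1} = \pi_\varphi(\gamma\eta\gamma^{-1})$ follows from the fact that $\pi_\varphi$ and $\rho_\varphi$ commute, while the invariance $c_\varphi(\gamma)v_\varphi = v_\varphi$ follows from the GNS identity $\rho_\varphi(\gamma)v_\varphi = \pi_\varphi(\gamma^{-1})v_\varphi$. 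By the uniqueness of $U_\varphi$ recorded just above the proposition, $U_\varphi(\lambda_\gamma) = c_\varphi(\gamma)$, and hence $\rho_\varphi(\gamma) = \pi_\varphi(\gamma)^{-1} c_\varphi(\gamma) \in N_\varphi$ for every $\gamma$. Therefore
\begin{equation*}
N_\varphi = \bigl(\pi_\varphi(\Gamma) \cup \rho_\varphi(\Gamma) \cup U_\varphi(\Lambda)\bigr)'',
\end{equation*}
and taking commutants gives
\begin{equation*}
N_\varphi' = M_\varphi' \cap \rho_\varphi(\Gamma)' \cap U_\varphi(\Lambda)'.
\end{equation*}

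To conclude I would invoke the standard fact $\rho_\varphi(\Gamma)' = M_\varphi$ (equivalently $\rho_\varphi(\Gamma)'' = M_\varphi'$), which holds because $\varphi$ is a trace and $v_\varphi$ is cyclic and separating for $M_\varphi$. Concretely this is implemented by the antiunitary involution $J$ on $\mathcal{H}_\varphi$ determined by $J\pi_\varphi(x)v_\varphi = \pi_\varphi(x^*)v_\varphi$ for $x \in \mathbb{C}[\Gamma]$, which satisfies $J\pi_\varphi(\gamma)J = \rho_\varphi(\gamma)$ and $JM_\varphi J = M_\varphi'$; I would cite this from \cite{bekka2020unitary}. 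Substituting yields $N_\varphi' = M_\varphi \cap M_\varphi' \cap U_\varphi(\Lambda)' = \mathrm{Z}(M_\varphi) \cap U_\varphi(\Lambda)'$ as desired. The main obstacle is precisely this last identification, since the earlier sections of the paper do not develop Tomita--Takesaki theory explicitly and the formulation should be invoked carefully.
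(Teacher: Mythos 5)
Your proof is correct and takes a genuinely different route from the paper's. The paper argues through the domination theory of positive-definite functions: starting from a positive $T \in N_\varphi'$, it invokes \cite[Proposition 2.8]{bekka2020characters} to obtain the $\Lambda$-invariant positive-definite function $\varphi_{T^{1/2}}$ dominated by $\varphi$, uses $\mathrm{Inn}(\Gamma) \le \theta(\Lambda)$ to upgrade this to a relative trace, and then matches $T$ against the unique central element $S \in \mathrm{Z}(M_\varphi) \cap U_\varphi(\Lambda)'$ producing the same dominated trace. Your argument instead absorbs $\rho_\varphi(\Gamma)$ into $N_\varphi$ by identifying $c_\varphi(\gamma)$ with $U_\varphi(\lambda_\gamma)$ and then computes the commutant in one step via the commutation theorem $\rho_\varphi(\Gamma)' = M_\varphi$ for the tracial GNS representation. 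This is more transparent and makes the role of the hypothesis crisper, at the cost of explicitly invoking the Tomita--Takesaki commutation theorem for finite traces, which the paper does not cite directly even though it is present implicitly in \S 2 and \S 3 (e.g.\ in the injectivity of $\iota_\varphi$). Both proofs hinge on the observation that $c_\varphi(\gamma) \in U_\varphi(\Lambda)$ for every $\gamma \in \Gamma$; your verification of the two defining properties is correct, and it incidentally reveals that the paper's passing remark just above the proposition stating $U_\varphi(\lambda) = \pi_\varphi(\gamma)$ is a slip for $U_\varphi(\lambda) = c_\varphi(\gamma)$, since $\pi_\varphi(\gamma)$ fixes $v_\varphi$ only when $\varphi(\gamma) = 1$.
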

\begin{proof}
The inclusion in the following direction is immediate
\begin{equation}
\mathrm{Z}(M_\varphi) \cap U_\varphi (\Lambda)' \leq \pi_\varphi(\Gamma)' \cap U_\varphi (\Lambda)'  =
N'_\varphi.
\end{equation}
Let us argue in the converse direction and show that $N'_\varphi \le \mathrm{Z}(M_\varphi) \cap U_\varphi (\Lambda)'$. Consider any element $T \in N'_\varphi$.
Then $\varphi_{T^\frac{1}{2}}$ is a $\Lambda$-invariant positive-definite function on the group $\Gamma$ dominated by the trace $\varphi$   \cite[Proposition 2.8]{bekka2020characters}. Since $\theta(\Lambda)$ contains all inner automorphisms,  $\varphi_{T^\frac{1}{2}} \in \relTr{\Lambda}{\Gamma}$. The element $T$ with these properties is uniquely determined in $N'_\varphi$. On the other hand $\varphi_{T^\frac{1}{2}} = \varphi_{S^\frac{1}{2}}$ for some element $S \in \mathrm{Z}(M_\varphi)$ with $0 \le S \le 1$. The fact that $\varphi_{T^\frac{1}{2}} = \varphi_{S^\frac{1}{2}}$ is $\Lambda$-invariant implies that $S \in U_\varphi (\Lambda)'$, see the first paragraph on \cite[p.11]{bekka2020characters}. We obtain $S \in N'_\varphi$. The uniqueness of the element $T$ as above means that $T = S$ so that $T \in \mathrm{Z}(M_\varphi)$. As $T \in U_\varphi (\Lambda)'$ the desired result follows.
\end{proof}


As was explained in \S\ref{sec:conjugation representation}, the mapping 
\begin{equation}
\iota_\varphi : M_\varphi \to \mathcal{H}_\varphi, \quad \iota_\varphi : x\mapsto x v_\varphi \quad \forall x \in M_\varphi
\end{equation}
allows us to regard the von Neumann algebra $M_\varphi$ as a dense subspace of the Hilbert space $\mathcal{H}_\varphi$.
For every element $\lambda \in \Lambda$ consider the map
\begin{equation}
\alpha_\varphi(\lambda) : M_\varphi \to M_\varphi, \quad \alpha_\varphi(\lambda) x = U_\varphi (\lambda) x U_\varphi (\lambda)^{-1}.
\end{equation}
For each element $\lambda \in \Lambda$ the operator $\alpha_\varphi(\lambda)$ preserves the norm of vectors in the image of the map $\iota_\varphi$, in the sense that
\begin{equation}
\| \iota_\varphi(x) \|_{\mathcal{H}_\varphi} = \| \iota_\varphi(\alpha_\varphi(\lambda) x) \|_{\mathcal{H}_\varphi} \quad \forall x \in M_\varphi.
\end{equation}
As such the operator $\alpha_\varphi(\lambda)$ extends to a unitary operator on the Hilbert space $\mathcal{H}_\varphi$. This process determines a unitary representation $\alpha_\varphi : \Lambda \to \mathrm{U}(\mathcal{H}_\varphi)$. Observe that an element $x \in M_\varphi$ satisfies $x \in U_\varphi (\Lambda)'$ if and only if  $\iota_\varphi(x) = x v_\varphi \in \mathcal{H}_\varphi$ is an invariant vector for the unitary representation $\alpha_\varphi$.

\begin{prop}\label{prop: relative character by dimension of invariant space}
Assume that $\mathrm{Inn}(\Gamma) \le \Lambda$. Let $\varphi \in \relTr{\Lambda}{\Gamma}$ be a relative trace. Then $\varphi \in \relCh{\Lambda}{\Gamma}$ if and only if $\dim_\mathbb{C} \mathcal{H}_\varphi^{\alpha_\varphi} = 1$.
\end{prop}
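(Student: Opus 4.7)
The plan is to prove the proposition by establishing a \emph{relative} version of Proposition \ref{prop:embedding with dense image of H^c into Z(M)}: I will show that the embedding $\iota_\varphi$ restricts to an injection with dense image
\[
\iota_\varphi : N'_\varphi \;\hookrightarrow\; \mathcal{H}_\varphi^{\alpha_\varphi}.
\]
Once this is in place, the desired equivalence follows immediately. Indeed, $v_\varphi = \iota_\varphi(\mathrm{Id})$ is a nonzero invariant vector, so the right-hand side has dimension one if and only if $\iota_\varphi(N'_\varphi) = \mathbb{C} v_\varphi$, i.e.\ $N'_\varphi = \mathbb{C}\cdot \mathrm{Id}$, which by \cite[Proposition 2.8]{bekka2020characters} is exactly the condition that $\varphi$ be a relative character.

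The first step is to use the hypothesis $\mathrm{Inn}(\Gamma) \le \theta(\Lambda)$ to establish the inclusion $\mathcal{H}_\varphi^{\alpha_\varphi} \subset \mathcal{H}_\varphi^{c_\varphi}$. Whenever $\theta(\lambda) = \mathrm{Inn}(\gamma)$, the uniqueness part of the definition of $U_\varphi$ forces $U_\varphi(\lambda) = \pi_\varphi(\gamma)$, and the intertwining computation recorded in Section \ref{sec:conjugation representation} (relating $\mathrm{Ad}_\gamma$ with $c_\varphi(\gamma)$ via $\iota_\varphi$) shows that $\alpha_\varphi(\lambda)$ and $c_\varphi(\gamma)$ coincide as unitary operators on $\mathcal{H}_\varphi$. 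Since by hypothesis every inner automorphism of $\Gamma$ is realized in this way, any $\alpha_\varphi$-invariant vector is automatically $c_\varphi$-invariant.

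The second, and main, step is to apply Lemma \ref{lem: a lemma on vector spaces} with $V = \mathcal{H}_\varphi$, $C = (M_\varphi)_1$ and the closed subspace $W = \mathcal{H}_\varphi^{\alpha_\varphi}$. Because each $\alpha_\varphi(\lambda)$ is a $*$-automorphism of $M_\varphi$ extended to a unitary on $\mathcal{H}_\varphi$, it preserves the operator-norm unit ball $(M_\varphi)_1$, so the nearest point projection $\Pi : \mathcal{H}_\varphi \to (M_\varphi)_1$ is $\alpha_\varphi$-equivariant and hence stabilizes $W$. The lemma therefore gives that $\iota_\varphi(M_\varphi) \cap W$ is dense in $W$. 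I now identify this intersection with $\iota_\varphi(N'_\varphi)$: if $\iota_\varphi(x) \in W$, then the injectivity of $\iota_\varphi$ yields $x \in U_\varphi(\Lambda)'$; the first step then gives that $\iota_\varphi(x)$ is also $c_\varphi$-invariant, and the computation from Section \ref{sec:conjugation representation} (reproduced in the proof of Proposition \ref{prop:embedding with dense image of H^c into Z(M)}) forces $x \in \mathrm{Z}(M_\varphi)$. By Proposition \ref{prop:understanding the center}, $x \in \mathrm{Z}(M_\varphi) \cap U_\varphi(\Lambda)' = N'_\varphi$.

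The subtle point — and the real obstacle — is precisely the identification $\iota_\varphi(M_\varphi) \cap \mathcal{H}_\varphi^{\alpha_\varphi} = \iota_\varphi(N'_\varphi)$, since a priori $U_\varphi(\Lambda)' \cap M_\varphi$ could be strictly larger than $N'_\varphi$. This is where the hypothesis $\mathrm{Inn}(\Gamma) \le \theta(\Lambda)$ is genuinely used: it upgrades $\alpha_\varphi$-invariance of $\iota_\varphi(x)$ to $c_\varphi$-invariance, and only the latter forces centrality of $x$ in $M_\varphi$. Everything else is bookkeeping built on top of the convex-geometric Lemma \ref{lem: a lemma on vector spaces} already at our disposal.
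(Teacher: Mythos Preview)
Your proposal is correct and follows essentially the same approach as the paper: both apply Lemma \ref{lem: a lemma on vector spaces} with $W=\mathcal{H}_\varphi^{\alpha_\varphi}$ and $C=(M_\varphi)_1$, use the hypothesis $\mathrm{Inn}(\Gamma)\le\theta(\Lambda)$ via $\alpha_\varphi(\lambda)=c_\varphi(\gamma)$ to force $\iota_\varphi(M_\varphi)\cap W=\iota_\varphi(N'_\varphi)$, and finish with the criterion $N'_\varphi=\mathbb{C}\cdot\mathrm{Id}$. You have simply unpacked the paper's terse ``same proof as Proposition \ref{prop:embedding with dense image of H^c into Z(M)}'' and made the role of Proposition \ref{prop:understanding the center} explicit.
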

\begin{proof}
The same proof as that of Proposition \ref{prop:embedding with dense image of H^c into Z(M)} relying on Lemma \ref{lem: a lemma on vector spaces} shows that the commutant $N_\varphi'$ is dense in the 
Hilbert subspace $\mathcal{H}_\varphi^{\alpha_\varphi}$. 
Note that any element $\lambda \in \Lambda$ which acts on the group $\Gamma$ via an inner automorphism of the form $\mathrm{Inn}(\gamma)$ for some element $\gamma \in \Gamma$ satisfies $\alpha_{\varphi}(\lambda) = c_\varphi(\gamma)$ where $c_\varphi : \Gamma \to \mathrm{U}(\mathcal{H}_\varphi) $ is the conjugation representation studied in \S\ref{sec:conjugation representation}. The desired conclusion follows by using the aforementioned fact  that the   relative trace $\varphi $ is a relative character if and only if $N'_\varphi = \C \cdot \mathrm{Id}$.
\end{proof}

Assume that  $\lambda\in \Lambda$ is an element satisfying $\theta(\lambda)=\mathrm{Inn}(g)$ for some element $\gamma \in \Gamma$. In this case we have already seen  that $U_\varphi(\lambda)=\pi_\varphi(\gamma)$. As a result $\alpha_\varphi(\lambda)=c_\varphi(\gamma)$ holds true. 
Therefore Proposition \ref{prop: relative character by dimension of invariant space} can be seen as a generalization to the relative setting of the statements in \S\ref{sec:conjugation representation} dealing with the conjugation representation.

\subsection*{Spectral gap for the representation $\alpha_\varphi$}
The action by automorphisms of the group $\Lambda$ on the group $\Gamma$  induces an action on its group ring $\C[\Gamma]$. We denote this action by $x \mapsto x^\lambda$ for any element $\lambda \in \Lambda$ and $x \in \mathbb{C}\left[\Gamma\right]$.

Fix an arbitrary element  $a \in \ell^1(\Lambda)$  with $\|a\|_1 = 1$ whose support generates the group $\Lambda$.
Denote $b = a^* a$ and  write  $b=\sum_{\lambda\in \Lambda}b_{\lambda}\lambda$ with coefficients $b_{\lambda}\in\mathbb{C}$.
We   extend Lemma \ref{lem:norm estimate for conjugation representation} to the relative setting.

\begin{lemma}
\label{lem:norm estimate for general representation}
Fix   $0 < \beta < 1$. Let $\varphi \in \relTr{\Lambda}{\Gamma}$ be a relative trace. Then
\begin{equation}
\label{eq:smile}
\|\alpha_\varphi(a)_{|\mathrm{span}_\mathbb{C}(v_\varphi)^\perp}\| \le \beta^{\frac{1}{2}}
\end{equation}
if and only if the inequality 
\begin{equation}
\label{eq:smile2}
 \varphi\left(\sum_{\lambda}b_{\lambda} x^* x^\lambda -\beta x^* x    \right) \leq  (1-\beta) |\varphi(x)|^2
\end{equation}
holds true for every element $x \in \mathbb{C}\left[\Gamma\right]$.
\end{lemma}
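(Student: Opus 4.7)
The strategy mirrors the proof of Lemma \ref{lem:norm estimate for conjugation representation}. The first observation is that although $\alpha_\varphi$ is defined via conjugation on $M_\varphi$ and then transported to $\mathcal{H}_\varphi$ through $\iota_\varphi$, the invariance $U_\varphi(\lambda) v_\varphi = v_\varphi$ implies
\begin{equation}
\alpha_\varphi(\lambda)(x v_\varphi) = U_\varphi(\lambda) x U_\varphi(\lambda)^{-1} v_\varphi = U_\varphi(\lambda) x v_\varphi
\end{equation}
for every $x \in M_\varphi$. Since the image of $\iota_\varphi$ is dense, the unitary $\alpha_\varphi(\lambda)$ coincides with $U_\varphi(\lambda)$ on all of $\mathcal{H}_\varphi$. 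Thus the estimate in Equation (\ref{eq:smile}) is really a statement about the representation $U_\varphi$ and its invariant unit vector $v_\varphi$.

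Next, I would invoke the auxiliary Lemma \ref{lem:condition for operator norm bound on complement of vector} with $A = \alpha_\varphi(a) = U_\varphi(a)$ and $v = v_\varphi$. This reduces the operator norm bound in Equation (\ref{eq:smile}) to the scalar inequality
\begin{equation}
\beta \|w\|^2 - \|\alpha_\varphi(a) w\|^2 \ge (\beta - 1) |\langle w, v_\varphi \rangle|^2
\end{equation}
for every vector $w$ in the dense subset $\{\pi_\varphi(x) v_\varphi : x \in \mathbb{C}[\Gamma]\}$ of $\mathcal{H}_\varphi$, where I use that the set $\pi_\varphi(\mathbb{C}[\Gamma]) v_\varphi$ is dense by cyclicity of $v_\varphi$ for $\pi_\varphi$.

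It then remains to translate each of the three terms into an expression in the trace $\varphi$. The computations $\|\pi_\varphi(x) v_\varphi\|^2 = \varphi(x^*x)$ and $\langle \pi_\varphi(x) v_\varphi, v_\varphi \rangle = \varphi(x)$ were already used in Lemma \ref{lem:norm estimate for conjugation representation}. The new ingredient, and the only point that requires care, is the computation of $\|\alpha_\varphi(a) \pi_\varphi(x) v_\varphi\|^2$. Here I would exploit the defining intertwining relation $U_\varphi(\lambda) \pi_\varphi(x) U_\varphi(\lambda)^{-1} = \pi_\varphi(x^\lambda)$ to write $U_\varphi(\lambda) \pi_\varphi(x) v_\varphi = \pi_\varphi(x^\lambda) v_\varphi$, and then expand
\begin{equation}
\|\alpha_\varphi(a) \pi_\varphi(x) v_\varphi\|^2 = \langle U_\varphi(b) \pi_\varphi(x) v_\varphi, \pi_\varphi(x) v_\varphi \rangle = \sum_{\lambda \in \Lambda} b_\lambda \, \varphi(x^* x^\lambda),
\end{equation}
in analogy with Equations (\ref{eq: inner product for conjugation rep}) and (\ref{eq: inner product for conjugation rep2}). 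Substituting these three identities into the scalar inequality above and rearranging yields Equation (\ref{eq:smile2}), establishing the equivalence in both directions simultaneously. I do not anticipate a serious obstacle; the only subtlety is keeping track of the twisted element $x^\lambda$ arising from the $\Lambda$-action on $\mathbb{C}[\Gamma]$, which replaces the conjugation by group elements seen in the proof of Lemma \ref{lem:norm estimate for conjugation representation}.
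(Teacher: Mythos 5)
Your proposal is correct and takes essentially the same route as the paper: expand $\|\alpha_\varphi(a)\pi_\varphi(x)v_\varphi\|^2 = \langle \alpha_\varphi(b)\pi_\varphi(x)v_\varphi,\pi_\varphi(x)v_\varphi\rangle = \sum_\lambda b_\lambda\varphi(x^*x^\lambda)$ and feed the three trace identities into the auxiliary Lemma \ref{lem:condition for operator norm bound on complement of vector}. Your opening observation that the extended unitary $\alpha_\varphi(\lambda)$ simply equals $U_\varphi(\lambda)$ on all of $\mathcal{H}_\varphi$ is a clean shortcut, but it leads to the same key identity $\alpha_\varphi(\lambda)\pi_\varphi(x)v_\varphi=\pi_\varphi(x^\lambda)v_\varphi$ that the paper derives directly.
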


\begin{proof}
Let $(\pi_\varphi,\rho_\varphi,\mathcal{H}_\varphi,v_\varphi)$ be the GNS data associated to the relative trace $\varphi$. Consider a vector $w \in \mathcal{H}_\varphi$ of the form  $w = \pi_\varphi(x) v_\varphi$ for some element $x \in \mathbb{C}\left[\Gamma\right]$. We obtain the estimate
\begin{align} 
\begin{split}
\label{eq:fun equation}
\left\langle \alpha_{\varphi}(\lambda)w,w\right\rangle &=\left\langle \alpha_{\varphi}(\lambda)\pi_{\varphi}(x)v_{\varphi},\pi_{\varphi}(x)v_{\varphi}\right\rangle = \left\langle U_\varphi (\lambda)\pi_{\varphi}(x) U_{\lambda}^{-1}v_{\varphi},\pi_{\varphi}(x)v_{\varphi}\right\rangle = \\
&= \left\langle \pi_{\varphi}(x^\lambda)v_{\varphi},\pi_{\varphi}(x)v_{\varphi}\right\rangle
=\varphi\left(x^* x^\lambda \right)
\end{split}
\end{align}
 for every element $\lambda \in \Lambda$. 
Note that the   estimate  in     Equation 
 (\ref{eq:fun equation}) is analogous to that of Equation (\ref{eq: inner product for conjugation rep}) with the difference of using the unitary representation $\alpha_\varphi$ of  the group $\Lambda$ instead of the conjugation representation $c_\varphi$ of the group $\Gamma$.
In particular
\begin{equation}
\label{eq:somewhat fun equation}
\|\alpha_{\varphi}(a)w\|^{2}=\left\langle \alpha_{\varphi}(b)w,w\right\rangle =\sum_{\lambda\in \Lambda} \left\langle b_{\lambda} \alpha_{\varphi}(\lambda)w,w\right\rangle =\varphi\left(\sum_{\lambda\in \Lambda}b_{\lambda}x^* x^\lambda \right).
\end{equation}
Once again, note that Equation (\ref{eq:somewhat fun equation})  is analogous to Equation \ref{eq: inner product for conjugation rep2} up to replacing the conjugation representation $c_\varphi$ with the representation $\alpha_\varphi$.
From this point onward the proof follows in  exactly the same way as that of Lemma \ref{lem:norm estimate for conjugation representation}.
\end{proof}

\begin{rmk}
The only property of the conjugation representation $c_\varphi$ and the representation $\alpha_\varphi$, which was used in Lemmas   \ref{lem:norm estimate for conjugation representation} and \ref{lem:norm estimate for general representation} respectively, to express the spectral gap via formulas of traces is the fact that the representation under consideration preserves the  group algebra $\C[\Gamma]$ sitting inside the Hilbert space $\mathcal{H}_\varphi$.
\end{rmk}

\subsection*{Finite-dimensional relative characters}

The correspondence taking a relative trace $\varphi \in \relTr{\Lambda}{\Gamma}$ to the unitary representation $U_\varphi : \Lambda \to \mathrm{U}(\mathcal{H}_\varphi) $ is continuous with respect to the pointwise convergence topology on traces and the Fell topology on unitary representations. The proof of this fact is mutatis mutandis the same as that of  Lemma \ref{lem:taking character to its GNS is Fell continuous}. Indeed  as  $\mathrm{Inn}(\Gamma) \le \theta(\Lambda)$ we get $\pi_\varphi(\Gamma)\subseteq U_\varphi(\Lambda)$ so that   the vector $v_\varphi \in \mathcal{H}_\varphi$ is  cyclic for the representation $U_\varphi$ as well. 
\begin{prop}\label{prop:rel-chars-fd-isolated}
Let $\Lambda$ be a countable group admitting an action   $\theta:\Lambda \to \Aut{\Gamma}$ on the countable group $\Gamma$. Assume that $\Lambda$ has Kazhdan's property (T) and that $\mathrm{Inn}(\Gamma) \le \theta(\Lambda)$. Then  any finite-dimensional relative character $\varphi\in \relCh{\Lambda}{\Gamma}$   is an  isolated point of the set $\relCh{\Lambda}{\Gamma}$.
\end{prop}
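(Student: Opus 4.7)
The plan is to parallel the proof of Proposition~\ref{prop:finite dim is not an accumulation point restate} from the non-relative setting, with the unitary representation $U_\varphi = \alpha_\varphi$ of $\Lambda$ on $\mathcal{H}_\varphi$ playing the role that $\pi_\varphi$ plays for $\Gamma$. Fix a finite-dimensional $\varphi \in \relCh{\Lambda}{\Gamma}$ and a sequence $\varphi_n \in \relCh{\Lambda}{\Gamma}$ with $\varphi_n \to \varphi$ pointwise; the goal is $\varphi_n = \varphi$ for every sufficiently large $n$. First, the assignment $\varphi \mapsto U_\varphi$ is continuous from the pointwise topology to the Fell topology --- this is the analog of Lemma~\ref{lem:taking character to its GNS is Fell continuous} for $\Gamma \rtimes \Lambda$ indicated in the paragraph preceding the proposition --- so $U_{\varphi_n} \to U_\varphi$, with $U_\varphi$ finite-dimensional. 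Applying Lemma~\ref{lem:finite dim rep is isolated}(1) with $\mathcal{R} = \mathrm{Rep}_{\aleph_0}(\Lambda)$ and the property (T) assumption on $\Lambda$, one obtains that $U_{\varphi_n}$ contains a non-trivial finite-dimensional $\Lambda$-subrepresentation for all large $n$. Concretely, $U_\varphi \otimes U_{\varphi_n}^* \to U_\varphi \otimes U_\varphi^*$ in the Fell topology, the latter has the $\Lambda$-invariant vector $\mathrm{Id}_{\mathcal{H}_\varphi} \in \mathrm{HS}(\mathcal{H}_\varphi)$, and property (T) then produces non-zero $\Lambda$-equivariant finite-rank intertwiners $T_n : \mathcal{H}_{\varphi_n} \to \mathcal{H}_\varphi$.

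The next step invokes the relative character condition $N'_{\varphi_n} = \mathbb{C} \cdot \mathrm{Id}$ (equivalent to irreducibility of the joint representation $\rho_{\varphi_n} = \pi_{\varphi_n} \cdot U_{\varphi_n}$ of $\Gamma \rtimes \Lambda$ on $\mathcal{H}_{\varphi_n}$) together with the hypothesis $\mathrm{Inn}(\Gamma) \subseteq \theta(\Lambda)$, which guarantees that the conjugation operators $c_{\varphi_n}(\gamma)$ all lie inside $U_{\varphi_n}(\Lambda)$. These two inputs should upgrade the $\Lambda$-equivariant intertwiner $T_n$ to a genuinely $(\Gamma \rtimes \Lambda)$-equivariant one (for instance by averaging or by selecting $T_n$ from the space of $\Lambda$-intertwiners in $\mathrm{HS}(\mathcal{H}_{\varphi_n}, \mathcal{H}_\varphi)$ in a way that is compatible with the $\pi_{\varphi_n}(\Gamma)$-action). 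Granting such an upgrade, irreducibility of $\rho_{\varphi_n}$ forces $\ker(T_n) = 0$, making $T_n$ injective, so $\dim \mathcal{H}_{\varphi_n} \le \dim \mathcal{H}_\varphi < \infty$ and $\varphi_n$ is itself finite-dimensional for large $n$. This promotion is the relative counterpart of the tensor-square argument (Lemma~\ref{lemma:condition for invariant vectors for tensor}) that upgrades a finite-dimensional subrepresentation of the GNS to full finite-dimensionality of a character in the non-relative setting, and it is the principal obstacle of the proof.

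Finally, once both $\rho_{\varphi_n}$ and $\rho_\varphi$ are finite-dimensional irreducible representations of $\Gamma \rtimes \Lambda$ with $U_{\varphi_n} \to U_\varphi$ in the Fell topology, decomposing $U_\varphi$ into its irreducible $\Lambda$-components and applying Lemma~\ref{lem:finite dim rep is isolated}(2) to each component yields that the $\Lambda$-isomorphism class of $U_{\varphi_n}$ stabilizes to that of $U_\varphi$. Uniqueness of the $\Lambda$-invariant unit vector $v_{\varphi_n}$ up to scalar from Proposition~\ref{prop: relative character by dimension of invariant space} then forces $\varphi_n(\gamma) = \langle \pi_{\varphi_n}(\gamma) v_{\varphi_n}, v_{\varphi_n} \rangle = \varphi(\gamma)$ for all large $n$, establishing isolation of $\varphi$ in $\relCh{\Lambda}{\Gamma}$.
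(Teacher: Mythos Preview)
Your proposal has a genuine gap at precisely the step you flag as the principal obstacle: promoting a $\Lambda$-equivariant finite-rank intertwiner $T_n : \mathcal{H}_{\varphi_n} \to \mathcal{H}_\varphi$ to one that also respects the $\pi_{\varphi_n}(\Gamma)$- and $\pi_\varphi(\Gamma)$-actions. The suggestions you offer (averaging, or selecting $T_n$ compatibly with the $\Gamma$-action) are not backed by any mechanism in the paper, and there is no reason such a promotion should succeed in general. Moreover, even granting that $\varphi_n$ is finite-dimensional, your final paragraph invokes Lemma~\ref{lem:finite dim rep is isolated}(2), which requires the approximating representations $U_{\varphi_n}$ to be \emph{irreducible}; you have not established this, only finite-dimensionality.

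The paper's proof sidesteps both difficulties through a single observation you overlooked: under the hypothesis $\mathrm{Inn}(\Gamma) \le \theta(\Lambda)$, the $\Lambda$-representation $U_\psi$ associated to any relative character $\psi \in \relCh{\Lambda}{\Gamma}$ is already \emph{irreducible}, not merely the joint $(\Gamma \rtimes \Lambda)$-representation. Indeed, Proposition~\ref{prop:understanding the center} gives $N_\psi' = U_\psi(\Lambda)'$, and the relative-character condition $N_\psi' = \mathbb{C}\cdot\mathrm{Id}$ then forces $U_\psi(\Lambda)' = \mathbb{C}\cdot\mathrm{Id}$. Applying this to both $\varphi$ and every $\varphi_n$, one has a sequence of irreducible $\Lambda$-representations $U_{\varphi_n}$ converging in the Fell topology to the finite-dimensional irreducible $U_\varphi$. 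The standard fact that finite-dimensional points of the unitary dual of a Kazhdan group are isolated then gives $U_{\varphi_n} = U_\varphi$ for all large $n$, whence $\pi_{\varphi_n} = \pi_\varphi$ and $\varphi_n = \varphi$. No tensor-square argument, no intertwiner promotion, and no decomposition of $U_\varphi$ into components are needed.
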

\begin{proof}
Consider a sequence of relative characters $\varphi_n\in \relCh{\Lambda}{\Gamma}$ converging pointwise to a finite-dimensional relative character $\varphi\in \relCh{\Lambda}{\Gamma}$.
 As  $\mathrm{Inn}(\Gamma) \le \theta(\Lambda)$ and taking into account  Proposition \ref{prop:understanding the center}   we get
$N_\varphi' = U_\varphi(\Lambda)'$. On the other hand, since $\varphi$ is a relative character we have $N_\varphi'=\C \cdot \mathrm{Id}$. Therefore $U_\varphi(\Lambda)' = \mathbb{C} \cdot \mathrm{Id} $ which means that $U_\varphi(\Lambda)$ is an irreducible representation. The same argument applied with respect to each relative character $\varphi_n$ shows that each representation $U_{\varphi_n}$ is irreducible.

The  paragraph preceding this proposition shows that  the unitary representations corresponding to the relative characters $\varphi_n$ and $\varphi$ satisfy $U_{\varphi_n}\to U_\varphi$   in the Fell topology.
The assumption that the relative character $\varphi$ is finite-dimensional means that the Hilbert space $\mathcal{H}_\varphi$ is finite-dimensional. In particular, the representation $U_\varphi$ is finite-dimensional.
Recall that for Kazhdan groups, finite-dimensional representations are isolated points of the unitary dual in the Fell topology \cite[Theorem 1.2.5]{bekka2008kazhdan}. It follows that $U_{\varphi_n}=U_\varphi$ for   all $n\in \N$ sufficiently large. In particular $\pi_{\varphi_n}=\pi_\varphi$ so that $\varphi_n=\varphi$ for all $n \in \N$ sufficiently large.
\end{proof}

\subsection*{The simplex of relative traces}

Assume that the group $\Lambda$ has   property (T) and that $\mathrm{Inn}(\Gamma) \le \theta(\Lambda)$. The fact that the space of relative traces $\relTr{\Lambda}{\Gamma}$ is a Bauer simplex follows along the  same lines as   the proof of Theorem \ref{cor intro: Kazhdan group has Bauer simplex}. 
Let us provide a bit more detail.

\begin{proof}[Proof of Theorem \ref{thm intro: characteristic characters is closed}]
Consider a sequence of relative characters $\varphi_n \in \relCh{\Lambda}{\Gamma}$  converging pointwise to some relative trace  $\varphi \in \relTr{\Lambda}{\Gamma}$.
For each $n \in \N$  the subspace of  the Hilbert space $\mathcal{H}_{\varphi_n}$ consisting of ${\alpha_{\varphi_n}}$-invariant vectors is one-dimensional and is spanned by the cyclic vector $v_{\varphi_n}$, see  
 Proposition \ref{prop: relative character by dimension of invariant space}. Since the group $\Lambda$ has Kazhdan's property (T) and according to Lemma \ref{lem:norm estimate for general representation} there is some constant $0 < \beta < 1$  (depending on the chosen element $a \in \ell^1(\Lambda)$) such that Equation (\ref{eq:smile2}) holds true with respect to all the relative characters $\varphi_n$. The validity of this equation passes to the limiting trace $\varphi$ with the same value of $\beta$. Therefore the relative trace $\varphi$ is a relative character by the converse direction of Lemma \ref{lem:norm estimate for general representation} and of Proposition \ref{prop: relative character by dimension of invariant space}.
The second part of the statement concerning  finite-dimensional relative characters has already been established in Proposition \ref{prop:rel-chars-fd-isolated}.
\end{proof}

The last part of our work deals with relative characters of the   abelian group $\mathbb{Z}^d$.

\begin{cor}\label{cor:torus-measures-limit}
Assume that $d\geq 3$. Any sequence of pairwise distinct relative characters $\varphi_n\in \relCh{\mathrm{SL}_d(\Z)}{\Z^d}$ converges pointwise to the Dirac trace $\delta_e$ on $\Z^d$.
\end{cor}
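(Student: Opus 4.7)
My plan is to apply Theorem \ref{thm intro: characteristic characters is closed} to $\Lambda = \mathrm{SL}_d(\Z)$ acting on $\Gamma = \Z^d$ and then translate the statement into weak-$*$ convergence of ergodic invariant measures on $\T^d$, invoking the classical classification of such measures. First I would verify the hypotheses: $\mathrm{SL}_d(\Z)$ has Kazhdan's property (T) since $d \ge 3$, and the containment $\mathrm{Inn}(\Z^d) \le \theta(\mathrm{SL}_d(\Z))$ holds vacuously because $\Z^d$ is abelian. Theorem \ref{thm intro: characteristic characters is closed} then yields that $\relTr{\mathrm{SL}_d(\Z)}{\Z^d}$ is a Bauer simplex and that every finite-dimensional relative character is an isolated point of $\relCh{\mathrm{SL}_d(\Z)}{\Z^d}$.

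Next, via Bochner's theorem the space $\relTr{\mathrm{SL}_d(\Z)}{\Z^d}$ is affinely homeomorphic to the compact convex space of $\mathrm{SL}_d(\Z)$-invariant Borel probability measures on $\T^d = \widehat{\Z^d}$. Under this identification, relative characters correspond to ergodic invariant measures; the Dirac trace $\delta_e$ corresponds to the Haar measure on $\T^d$; and a relative character is finite-dimensional precisely when its associated measure is supported on a finite orbit, equivalently, on torsion points in $(\Q/\Z)^d$. Pointwise convergence of traces corresponds to weak-$*$ convergence of measures.

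With this dictionary in hand the limit argument is immediate. By compactness of $\relTr{\mathrm{SL}_d(\Z)}{\Z^d}$, to prove $\varphi_n \to \delta_e$ it suffices to show that every accumulation point of the sequence equals $\delta_e$. Let $\varphi$ be such an accumulation point; by the Bauer property $\varphi \in \relCh{\mathrm{SL}_d(\Z)}{\Z^d}$. If $\varphi$ were finite-dimensional, the isolation of finite-dimensional relative characters would force $\varphi_n = \varphi$ for infinitely many $n$, contradicting distinctness. Hence $\varphi$ is infinite-dimensional and corresponds to an ergodic $\mathrm{SL}_d(\Z)$-invariant probability measure on $\T^d$ with infinite support. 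I would then invoke the classical classification (known for all $d \ge 2$) that the only such measure is the Haar measure, concluding $\varphi = \delta_e$. The main obstacle is not the limit argument itself, which flows automatically from Theorem \ref{thm intro: characteristic characters is closed} via the Bauer--isolation structure, but rather this classification step: it is an external input identifying Haar as the unique non-finite-dimensional relative character, rather than a consequence of the framework developed in the paper.
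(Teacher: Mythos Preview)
Your proposal is correct and follows essentially the same route as the paper: apply Theorem \ref{thm intro: characteristic characters is closed} to deduce that any accumulation point is an infinite-dimensional relative character, translate via Bochner's theorem to an ergodic $\mathrm{SL}_d(\Z)$-invariant measure on $\T^d$ that is non-atomic (equivalently, has infinite support), and then invoke the classical fact (the paper cites Burger) that Haar is the unique such measure. Your write-up is in fact more explicit than the paper's about verifying the hypotheses and setting up the Bochner dictionary, but the logical structure is identical.
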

The Pontryagin dual of the free abelian group $\Z^d$ is the torus $\T^d$. Bochner's theorem sets up an $\mathrm{SL}_d(\mathbb{Z})$-equivariant affine homeomorphism $\mathrm{Tr}(\Z^d) \cong \mathrm{Prob}(\T^d)$ of compact convex sets,  where the space of probability measures $\mathrm{Prob}(\T^d)$ is equipped with the weak-$*$ topology. Therefore Corollary \ref{cor:torus-measures-limit} is equivalent to the following dynamical statement:
\emph{Let $\mu_n$ be any sequence of pairwise distinct ergodic $\mathrm{SL}_d(\Z)$-invariant Borel probability measures on the torus $\T^d$. Then the sequence $\mu_n$ converges in the weak-$*$ topology to the Haar measure on the torus $\T^d$.} This statement is certainly well known to experts and is valid for all $d \ge 2$ (however we could not locate a reference in the literature).

 \begin{proof}[Proof of Corollary \ref{cor:torus-measures-limit}]
Let $\varphi_n \in \relCh{\mathrm{SL}_d(\Z)}{\Z^d}$  be any sequence of pairwise distinct relative characters. Consider an accumulation point $\varphi \in \relTr{\mathrm{SL}_d(\Z)}{\Z^d}$ so that $\varphi$ is a relative trace.    Theorem \ref{thm intro: characteristic characters is closed} says that  $\varphi$ is in fact an infinite-dimensional relative character. In particular, the corresponding ergodic $\mathrm{SL}_d(\Z)$-invariant Borel probability measure $\widehat{\varphi} \in \mathrm{Prob}(\T^d)$ is non-atomic. However the only non-atomic $\mathrm{SL}_d(\Z)$-invariant Borel probability measures on the torus $\T^d$ is the Haar measure \cite[Proposition 9]{burger1991kazhdan}. In other words, the  character $\varphi$ must be the Dirac trace $ \delta_e$.
\end{proof}


We conclude with a  vanishing result for the semidirect product $ \mathrm{Aut}(\Z^d) \ltimes \Z^d$.

\begin{thm}\label{thm:semidirect product}
    Let $\varphi_n\in \Ch{\mathrm{GL}_d(\Z)\ltimes \Z^d}$ be any sequence of characters such that the restrictions $\varphi_{n\,|\Z^d}$ are pairwise distinct. Then the characters $\varphi_n$ converge  pointwise to the Dirac character $\delta_e \in \Ch{\mathrm{GL}_d(\Z)\ltimes \Z^d}$.
\end{thm}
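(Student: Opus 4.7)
The plan is to combine (an analog of) Corollary \ref{cor:torus-measures-limit} on convergence of relative characters of $\Z^d$ with an elementary positive-definiteness argument that lifts convergence from $\Z^d$ to the entire semidirect product $\Gamma := \mathrm{GL}_d(\Z) \ltimes \Z^d$.

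The first step is to verify that for any character $\varphi \in \Ch{\Gamma}$ the restriction $\varphi|_{\Z^d}$ is a relative character in $\relCh{\mathrm{GL}_d(\Z)}{\Z^d}$. Inside the GNS factor $M_\varphi$ the abelian subalgebra $\mathcal{A} := \pi_\varphi(\Z^d)''$ is preserved by the adjoint $\mathrm{GL}_d(\Z)$-action (since $\Z^d$ is normal), and any $\mathrm{GL}_d(\Z)$-fixed element of $\mathcal{A}$ commutes with $\pi_\varphi(\mathrm{GL}_d(\Z))$ by definition and with $\pi_\varphi(\Z^d)$ because $\mathcal{A}$ is abelian; hence it lies in $\pi_\varphi(\Gamma)' \cap M_\varphi = Z(M_\varphi) = \mathbb{C}$. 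Via Bochner duality this ergodicity of the $\mathrm{GL}_d(\Z)$-action on the spectrum of $\mathcal{A}$ translates to extremality of $\varphi|_{\Z^d}$ in the simplex $\relTr{\mathrm{GL}_d(\Z)}{\Z^d}$. Applied to the hypothesis of pairwise distinct restrictions this yields a sequence of pairwise distinct relative characters, to which the $\mathrm{GL}_d(\Z)$-version of Corollary \ref{cor:torus-measures-limit} applies --- it holds by the same proof since $\mathrm{GL}_d(\Z)$ has property (T) for $d \ge 3$ and Haar is the unique non-atomic $\mathrm{GL}_d(\Z)$-invariant probability measure on $\T^d$ --- giving $\varphi_n|_{\Z^d} \to \delta_e$ pointwise on $\Z^d$.

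It then suffices to show that any pointwise accumulation point $\varphi \in \Tr{\Gamma}$ of $(\varphi_n)$, which by the preceding step satisfies $\varphi|_{\Z^d} = \delta_e$, must equal $\delta_e$ on all of $\Gamma$. Fix $(g, v) \in \Gamma$ with $g \ne I$. Conjugating $(g, v)$ by $(I, w) \in \Z^d$ yields $(g, v + (I - g)w)$, and since $g \ne I$ the image $(I - g)\Z^d$ is a nontrivial, hence infinite, subgroup of $\Z^d$. Choose any $N$ distinct elements $v_1, \dots, v_N \in v + (I - g)\Z^d$ and form the Hermitian matrix $M = \bigl(\varphi(\gamma_j^{-1} \gamma_i)\bigr)_{0 \le i, j \le N}$ with $\gamma_0 = (I, 0)$ and $\gamma_i = (g, v_i)$ for $i \ge 1$. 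Conjugation-invariance of $\varphi$ gives $M_{0, i} = \varphi(g, v)$, while a direct computation yields $\gamma_j^{-1} \gamma_i = (I, g^{-1}(v_i - v_j)) \in \Z^d \setminus \{0\}$ for $i \ne j$ with $i, j \ge 1$, so the hypothesis $\varphi|_{\Z^d} = \delta_e$ forces $M_{j, i} = 0$. The Schur complement of the identity block is then $1 - N |\varphi(g, v)|^2 \ge 0$; letting $N \to \infty$ forces $\varphi(g, v) = 0$. Combined with $\varphi(I, v) = \delta_{v, 0}$ this gives $\varphi = \delta_e$, and since every accumulation point of $(\varphi_n)$ equals $\delta_e$ the full sequence converges pointwise to $\delta_e$.

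The most delicate step is the factor-theoretic identification of $\varphi|_{\Z^d}$ as a relative character --- a standard consequence of the Mackey-type analysis of semidirect products with abelian normal kernel, though not spelled out explicitly earlier in the paper. The subsequent Schur complement bootstrap is entirely self-contained once $\varphi|_{\Z^d} = \delta_e$ is established.
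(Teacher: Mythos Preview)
Your proof is correct and takes a genuinely different --- and more elementary --- route than the paper's. The paper invokes Theorem~\ref{thm:general arithmetic} (which rests on the deep character-rigidity results of \cite{bader-iti2022charmenability}) to kill $\varphi$ outside the amenable radical $\{\pm I\}\ltimes\Z^d$, then appeals to Corollary~\ref{cor:torus-measures-limit} for the $\Z^d$ part, and finally handles the remaining coset $\{-I\}\ltimes\Z^d$ by a commutator trick citing \cite[Lemma~4.13]{lavi2023characters}. You bypass the first and third steps entirely: once $\varphi|_{\Z^d}=\delta_e$ is known, your Schur-complement argument dispatches \emph{every} element $(g,v)$ with $g\neq I$ uniformly, using only positive-definiteness and the infinitude of the conjugacy class. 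This makes the proof self-contained modulo Corollary~\ref{cor:torus-measures-limit} and avoids importing the full arithmetic charmenability machinery.

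Two minor remarks. First, your Step~1 --- that the restriction of a character of $\Gamma$ to $\Z^d$ is a relative character --- is also needed in the paper's proof to justify its appeal to Corollary~\ref{cor:torus-measures-limit}, though the paper leaves it implicit; you are right to spell it out, and the factor-theoretic argument you sketch (fixed points of $\mathrm{Ad}$ on $\pi_\varphi(\Z^d)''$ are central, hence scalar) is the standard one. Second, you correctly observe that Corollary~\ref{cor:torus-measures-limit} is stated for $\mathrm{SL}_d(\Z)$ while you need it for $\mathrm{GL}_d(\Z)$; the adaptation is immediate since $\mathrm{GL}_d(\Z)$ inherits property~(T) from its index-two subgroup $\mathrm{SL}_d(\Z)$ and any non-atomic $\mathrm{GL}_d(\Z)$-invariant measure on $\T^d$ is in particular $\mathrm{SL}_d(\Z)$-invariant, hence Haar.
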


\begin{proof}
Assume that $d \ge 3$. Consider the semidirect product $\Gamma = \mathrm{GL}_d(\Z) \ltimes \Z^d$.   Let $\varphi_n \in \Ch{\Gamma}$ be any sequence of characters whose restrictions to the subgroup $\Z^d$ are pairwise distinct. Let $\varphi \in \Tr{\Gamma}$ be any accumulation point of the sequence $\varphi_n$. We know that  $\varphi(\gamma) = 0$ for any element $\gamma \in \Gamma \setminus \Rad{\Gamma}$ where $\Rad{\Gamma}$ is the amenable radical given by $\Rad{\Gamma} = \mathrm{Z}(\mathrm{GL}_d(\Z)) \ltimes \Z^d$, see Theorem \ref{thm:general arithmetic}. In addition, the fact that $\varphi(\gamma) = 0 $ for any element $\gamma \in \Z^d \setminus \{e\}$ follows from  Corollary \ref{cor:torus-measures-limit}. 

It remains to show that $\varphi(\gamma) = 0$ for all elements $\gamma \in \Rad{\Gamma} \setminus \Z^d$.
Note that any given element $\gamma \in \Rad{\Gamma} \setminus \Z^d$ is of the form $\gamma = (-\mathrm{Id}, u)$ for some $u \in \Z^d$. Take any sequence  of pairwise distinct elements $v_n \in \Z^d$. The commutators given by
\begin{equation}
\left[\gamma,v_n\right] = \left[(-\mathrm{Id}, u), (\mathrm{Id},v_n) \right] = (\mathrm{Id},2(v_n-u))
\end{equation}
are pairwise distinct as well. Since the trace $\varphi$ vanishes on $\Z^d \setminus \{e\}$ we conclude that $\varphi(\gamma) = 0$ using \cite[Lemma 4.13]{lavi2023characters}.
\end{proof}

\bibliographystyle{alpha}
\bibliography{musketeers}

\newcommand{\etalchar}[1]{$^{#1}$}
\begin{thebibliography}{BdLHV08}

\bibitem[ABB{\etalchar{+}}17]{abert2017growth}
Miklos Abert, Nicolas Bergeron, Ian Biringer, Tsachik Gelander, Nikolay
  Nikolav, Jean Raimbault, and Iddo Samet.
\newblock On the growth of {$L^2$}-invariants for sequences of lattices in
  {L}ie groups.
\newblock {\em Annals of Mathematics}, 185(3):711--790, 2017.

\bibitem[AGV14]{abert2014kesten}
Mikl{\'o}s Ab{\'e}rt, Yair Glasner, and B{\'a}lint Vir{\'a}g.
\newblock Kesten’s theorem for invariant random subgroups.
\newblock 2014.

\bibitem[BBH21]{bader2021charmenability}
Uri Bader, R{\'e}mi Boutonnet, and Cyril Houdayer.
\newblock Charmenability of higher rank arithmetic groups.
\newblock {\em arXiv preprint arXiv:2112.01337}, 2021.

\bibitem[BBHP22]{bbhp2022charmenability}
Uri Bader, R{\'e}mi Boutonnet, Cyril Houdayer, and Jesse Peterson.
\newblock Charmenability of arithmetic groups of product type.
\newblock {\em Inventiones mathematicae}, pages 1--57, 2022.

\bibitem[BdlH20]{bekka2020unitary}
Bachir Bekka and Pierre de~la Harpe.
\newblock {\em Unitary Representations of Groups, Duals, and Characters}.
\newblock Mathematical Surveys and Monographs. American Mathematical Society,
  2020.

\bibitem[BdLHV08]{bekka2008kazhdan}
Bachir Bekka, Pierre de~La~Harpe, and Alain Valette.
\newblock {\em Kazhdan's property (T)}.
\newblock Cambridge university press, 2008.

\bibitem[Bek90]{bekka1990amenable}
Mohammed~EB Bekka.
\newblock Amenable unitary representations of locally compact groups.
\newblock {\em Inventiones mathematicae}, 100(1):383--401, 1990.

\bibitem[Bek07]{bekka2007operator}
Bachir Bekka.
\newblock Operator-algebraic superridigity for {$\mathrm{SL}_n(\mathbb{Z}), n
  \geq 3$}.
\newblock {\em Inventiones mathematicae}, 169(2):401--425, 2007.

\bibitem[BF20]{bekka2020characters}
Bachir Bekka and Camille Francini.
\newblock Characters of algebraic groups over number fields.
\newblock {\em arXiv preprint arXiv:2002.07497}, 2020.

\bibitem[BG17]{bader2017equicontinuous}
Uri Bader and Tsachik Gelander.
\newblock Equicontinuous actions of semisimple groups.
\newblock {\em Groups, Geometry, and Dynamics}, 11(3):1003--1039, 2017.

\bibitem[BH21]{boutonnet2021stationary}
R{\'e}mi Boutonnet and Cyril Houdayer.
\newblock Stationary characters on lattices of semisimple lie groups.
\newblock {\em Publications math{\'e}matiques de l'IH{\'E}S}, 133(1):1--46,
  2021.

\bibitem[BLMM02]{bass2002proalgebraic}
Hyman Bass, Alexander Lubotzky, Andy Magid, and Shahar Mozes.
\newblock The proalgebraic completion of rigid groups.
\newblock {\em Geometriae Dedicata}, 95(1):19--58, 2002.

\bibitem[BLST18]{bezrukavnikov2018character}
Roman Bezrukavnikov, Martin Liebeck, Aner Shalev, and Pham~Huu Tiep.
\newblock Character bounds for finite groups of {L}ie type.
\newblock {\em Acta Math}, 221:1--57, 2018.

\bibitem[Bur91]{burger1991kazhdan}
Marc Burger.
\newblock Kazhdan constants for {$\mathrm{SL} (3, \mathbb{Z})$}.
\newblock {\em J. reine angew. Math}, 413:36--67, 1991.

\bibitem[BV22]{bader-iti2022charmenability}
Uri Bader and Itamar Vigdorovich.
\newblock Charmenability and stiffness of arithmetic groups.
\newblock {\em arXiv preprint arXiv:2208.07347}, 2022.

\bibitem[Clo03]{clozel2003demonstration}
Laurent Clozel.
\newblock D{\'e}monstration de la conjecture {$\tau$}.
\newblock {\em Inventiones Mathematicae}, 151(2):297--328, 2003.

\bibitem[CMP19]{collins2019automorphism}
Beno{\^\i}t Collins, Michael Magee, and Doron Puder.
\newblock Automorphism-invariant positive definite functions on free groups.
\newblock {\em arXiv preprint arXiv:1906.01518}, 2019.

\bibitem[Fel60]{fell1960dual}
James~MG Fell.
\newblock The dual spaces of {$C^*$}-algebras.
\newblock {\em Transactions of the American Mathematical Society},
  94(3):365--403, 1960.

\bibitem[Glu93]{gluck1993character}
David Gluck.
\newblock Character value estimates for non-semisimple elements.
\newblock {\em Journal of Algebra}, 155(1):221--237, 1993.

\bibitem[Glu97]{gluck1997characters}
David Gluck.
\newblock Characters and random walks on finite classical groups.
\newblock {\em Advances in mathematics}, 129(1):46--72, 1997.

\bibitem[GW97]{glasner1997kazhdan}
Eli Glasner and Benjamin Weiss.
\newblock Kazhdan's property {T} and the geometry of the collection of
  invariant measures.
\newblock {\em Geometric \& Functional Analysis GAFA}, 7(5):917--935, 1997.

\bibitem[HS18]{hadwin2018stability}
Don Hadwin and Tatiana Shulman.
\newblock Stability of group relations under small {H}ilbert--{S}chmidt
  perturbations.
\newblock {\em Journal of Functional Analysis}, 275(4):761--792, 2018.

\bibitem[KKN21]{kaluba2021property}
Marek Kaluba, Dawid Kielak, and Piotr Nowak.
\newblock On property {(T)} for {$\mathrm{Aut}(F_n) $} and
  {$\mathrm{SL}_n(\mathbb{Z})$}.
\newblock {\em Annals of Mathematics}, 193(2):539--562, 2021.

\bibitem[KNO19]{kaluba2019aut}
Marek Kaluba, Piotr Nowak, and Narutaka Ozawa.
\newblock {$\mathrm{Aut}(\mathbb{F}_5)$} has property {(T)}.
\newblock {\em Mathematische annalen}, 375:1169--1191, 2019.

\bibitem[KS22]{kennedy2022noncommutative}
Matthew Kennedy and Eli Shamovich.
\newblock Noncommutative {C}hoquet simplices.
\newblock {\em Mathematische Annalen}, 382(3):1591--1629, 2022.

\bibitem[LL08]{lubotzky2008representation}
Alexander Lubotzky and Michael Larsen.
\newblock Representation growth of linear groups.
\newblock {\em Journal of the European Mathematical Society}, 10(2):351--390,
  2008.

\bibitem[LL23]{lavi2023characters}
Omer Lavi and Arie Levit.
\newblock Characters of the group {$\mathrm{EL}_d (\mathcal{R})$} for a
  commutative noetherian ring {$\mathcal{R}$}.
\newblock {\em Advances in Mathematics}, 419:108948, 2023.

\bibitem[LOST10]{liebeck2010ore}
Martin Liebeck, Eamonn O'Brien, Aner Shalev, and Pham~Huu Tiep.
\newblock The {O}re conjecture.
\newblock {\em Journal of the European Mathematical Society}, 12(4):939--1008,
  2010.

\bibitem[LST11]{larsen2011waring}
Michael Larsen, Aner Shalev, and Pham~Huu Tiep.
\newblock The {W}aring problem for finite simple groups.
\newblock {\em Annals of mathematics}, pages 1885--1950, 2011.

\bibitem[LV23]{levit2022characters}
Arie Levit and Itamar Vigdorovich.
\newblock Characters of solvable groups, {H}ilbert--{S}chmidt stability and
  dense periodic measures.
\newblock {\em Mathematische Annalen}, pages 1--49, 2023.

\bibitem[LZ89]{lubotzky1989variants}
Alexander Lubotzky and Robert Zimmer.
\newblock Variants of {K}azhdan’s property for subgroups of semisimple
  groups.
\newblock {\em Israel Journal of Mathematics}, 66(1-3):289--299, 1989.

\bibitem[LZ05]{lubotzky2005property}
Alex Lubotzky and Andrzej Zuk.
\newblock On property ($\tau$).
\newblock {\em Notices Amer. Math. Soc}, 52(6):626--627, 2005.

\bibitem[Mil47]{milman1947characteristics}
DP~Milman.
\newblock Characteristics of extremal points of regularly convex sets.
\newblock In {\em Doklady Akad. Nauk SSSR (NS)}, volume~57, pages 119--122,
  1947.

\bibitem[Nit20]{nitsche2020computer}
Martin Nitsche.
\newblock Computer proofs for property {(T)}, and {SDP} duality.
\newblock {\em arXiv preprint arXiv:2009.05134}, 2020.

\bibitem[OSV23]{iti-raz-Orovitz}
Joav Orovitz, Raz Slutsky, and Itamar Vigdorovich.
\newblock The space of traces of the free group and free products of matrix
  algebras.
\newblock {\em arXiv preprint arXiv:2308.10955}, 2023.

\bibitem[Pet16]{peterson2016lecture}
Jesse Peterson.
\newblock Lecture notes on “character rigidity”, 2016.

\bibitem[Phe01]{phelps2001lectures}
Robert~R Phelps.
\newblock {\em Lectures on Choquet’s theorem}.
\newblock Springer, 2001.

\bibitem[Pir21]{pirttimaki2021survey}
Teemu Pirttimäki.
\newblock A survey of {K}olmogorov quotients, 2021.

\bibitem[PT16]{peterson2016character}
Jesse Peterson and Andreas Thom.
\newblock Character rigidity for special linear groups.
\newblock {\em Journal f{\"u}r die reine und angewandte Mathematik (Crelles
  Journal)}, 2016(716):207--228, 2016.

\bibitem[Ser70]{serre1970probleme}
Jean-Pierre Serre.
\newblock Le probleme des groupes de congruence pour {$\mathrm{SL}(2)$}.
\newblock {\em Annals of Mathematics}, pages 489--527, 1970.

\bibitem[Ste67]{steinberg1967lectures}
Robert Steinberg.
\newblock {\em Lectures on Chevalley groups}, volume~1.
\newblock Yale University New Haven, 1967.

\bibitem[Wei13]{weibel2013k}
Charles~A Weibel.
\newblock {\em The {$ K $}-book: An Introduction to Algebraic {$ K $}-theory},
  volume 145.
\newblock American Mathematical Soc., 2013.

\end{thebibliography}

\end{document}